\documentclass[12pt]{amsart}
\textwidth=14.5cm
\oddsidemargin=1cm
\evensidemargin=1cm
\usepackage{amsmath}
\usepackage{amsxtra}
\usepackage{amscd}
\usepackage{amsthm}
\usepackage{amsfonts}
\usepackage{amssymb}
\usepackage{eucal}
\usepackage{color}
\usepackage[all]{xy} 
\usepackage{mathabx}

\newtheorem{thm}{Theorem} [section]
\newtheorem{prop}[thm]{Proposition}
\newtheorem{lem}[thm]{Lemma}

\newtheorem{rmk}[thm]{Remark}
\newtheorem{conj}[thm]{Conjecture}

\newtheorem{Thm}[thm]{Theorem}
\newtheorem{Cor}[thm]{Corollary}
\newtheorem{Lem}[thm]{Lemma}
\newtheorem{Prop}[thm]{Proposition}

\newtheorem{Claim}[thm]{Claim}

\newtheorem{Rem}[thm]{Remark}
\newtheorem{Def}[thm]{Definition}

\newcommand{\nc}{\newcommand}

\numberwithin{equation}{section}


\nc{\fa}{{\mathfrak a}}
\nc{\fb}{{\mathfrak b}}
\nc{\fc}{{\mathfrak c}}
\nc{\fd}{{\mathfrak d}}
\nc{\fe}{{\mathfrak e}}
\nc{\ff}{{\mathfrak f}}
\nc{\fg}{{\mathfrak g}}
\nc{\fh}{{\mathfrak h}}
\nc{\fiI}{{\mathfrak i}}  
\nc{\ffi}{{\mathfrak i}}  
\nc{\fj}{{\mathfrak j}}
\nc{\fk}{{\mathfrak k}}
\nc{\fl}{{\mathfrak{l}}}
\nc{\fm}{{\mathfrak m}}
\nc{\fn}{{\mathfrak n}}
\nc{\fo}{{\mathfrak o}}
\nc{\fp}{{\mathfrak p}}
\nc{\fq}{{\mathfrak q}}
\nc{\fr}{{\mathfrak r}}
\nc{\fs}{{\mathfrak s}}
\nc{\ft}{{\mathfrak t}}
\nc{\fu}{{\mathfrak u}}
\nc{\fv}{{\mathfrak v}}
\nc{\fw}{{\mathfrak w}}
\nc{\fz}{{\mathfrak z}}
\nc{\fx}{{\mathfrak x}}
\nc{\fy}{{\mathfrak y}}

\nc{\fS}{{\mathfrak S}}


\nc{\cA}{{\mathcal A}}
\nc{\cB}{{\mathcal B}}
\nc{\cC}{{\mathcal C}}
\nc{\cD}{{\mathcal D}}
\nc{\cE}{{\mathcal E}}
\nc{\cF}{{\mathcal F}}
\nc{\cG}{{\mathcal G}}
\nc{\cH}{{\mathcal H}}
\nc{\cI}{{\mathcal I}}
\nc{\cJ}{{\mathcal J}}
\nc{\cK}{{\mathcal K}}
\nc{\cL}{{\mathcal L}}
\nc{\cM}{{\mathcal M}}
\nc{\cN}{{\mathcal N}}
\nc{\cO}{{\mathcal O}}
\nc{\cP}{{\mathcal P}}
\nc{\cQ}{{\mathcal Q}}
\nc{\cR}{{\mathcal R}}
\nc{\cS}{{\mathcal S}}
\nc{\cT}{{\mathcal T}}
\nc{\cU}{{\mathcal U}}
\nc{\cV}{{\mathcal V}}
\nc{\cW}{{\mathcal W}}
\nc{\cZ}{{\mathcal Z}}
\nc{\cX}{{\mathcal X}}
\nc{\cY}{{\mathcal Y}}

\nc{\bA}{{\mathbb A}}
\nc{\bB}{{\mathbb B}}
\nc{\bC}{{\mathbb C}}
\nc{\bD}{{\mathbb D}}
\nc{\bE}{{\mathbb E}}
\nc{\bF}{{\mathbb F}}
\nc{\bG}{{\mathbb G}}
\nc{\bH}{{\mathbb H}}
\nc{\bI}{{\mathbb I}}
\nc{\bJ}{{\mathbb J}}
\nc{\bK}{{\mathbb K}}
\nc{\bL}{{\mathbb L}}
\nc{\bM}{{\mathbb M}}
\nc{\bN}{{\mathbb N}}
\nc{\bO}{{\mathbb O}}
\nc{\bP}{{\mathbb P}}
\nc{\bQ}{{\mathbb Q}}
\nc{\bR}{{\mathbb R}}
\nc{\bS}{{\mathbb S}}
\nc{\bT}{{\mathbb T}}
\nc{\bU}{{\mathbb U}}
\nc{\bV}{{\mathbb V}}
\nc{\bW}{{\mathbb W}}
\nc{\bZ}{{\mathbb Z}}
\nc{\bX}{{\mathbb X}}
\nc{\bY}{{\mathbb Y}}


\nc{\on}{\operatorname}

\nc{\BB}{{\mathcal B}}

\newcommand{\oplusl}{\bigoplus\limits}
\newcommand{\cupl}{\bigcup\limits}

\newcommand{\Hom}{{\rm Hom}}

\nc{\od}{{\on{D}}}

\newcommand{\tii}{\widetilde}

\newcommand{\imbed}{\hookrightarrow}
\newcommand{\iso}{{\tii \longrightarrow}}

\newcommand{\To}{\longrightarrow}

\newcommand{\Lotimes}{\overset{\rm L}{\otimes}}

\def\square{\hbox{\vrule\vbox{\hrule\phantom{o}\hrule}\vrule}}

\newcommand{\Nt}{{\widetilde{\mathcal N}}}
\renewcommand{\O}{{\mathcal O}}

\newcommand{\LL}{{\mathcal L}}

\newcommand{\fB}{{\mathfrak B}}

\newcommand{\chiti}{\widetilde{\chi}}

\newcommand{\fBh}{\widehat{\mathfrak B}}

\newcommand{\h}{{\mathfrak h}}

\newcommand{\g}{{\mathfrak g}}
\newcommand{\bu}{\bullet}

\newcommand{\Pone}{{\mathbb P}^1}
\newcommand{\Aone}{{\mathbb A}^1}

\newcommand{\epf}{\square}

\newcommand{\Ah}{\hat{\mathcal A}}

\newcommand{\Tt}{\widetilde{T}}

\newcommand{\A}{{\mathcal A}}
\newcommand{\B}{{\mathcal B}}
\newcommand{\C}{{\mathcal C}}
\newcommand{\F}{{\mathcal F}}
\newcommand{\G}{{\mathcal G}}
\newcommand{\M}{{\mathcal M}}
\renewcommand{\P}{{\mathcal P}}
\newcommand{\LM}{\widecheck{\mathcal M}}
\newcommand{\LD}{\widecheck{\mathcal D}}

\newcommand{\PP}{{\mathcal P}}

\newcommand{\Mh}{\widehat{\mathcal M}} 

\renewcommand{\a}{{\mathfrak a}}

\newcommand{\p}{{\mathfrak p}}
\renewcommand{\k}{{\mathfrak k}}
\renewcommand{\t}{{\mathfrak t}}

 \newcommand{\Lg}{{\check \g}}
 \newcommand{\Lk}{{\check \k}}
 \newcommand{\Lc}{{\check \fc}}

\newcommand{\Zet}{{\mathbb Z}}
\newcommand{\Ce}{{\mathbb C}}
\newcommand{\RE}{{\mathbb R}}

\newcommand{\bs}{\backslash}

 \newcommand{\Mmix}{\M^{\rm mix}}

\newcommand{\Fqbar}{{\bar{\mathbb F}_q}}

\newcommand{\al}{\alpha}

\newcommand{\la}{\lambda}

\newcommand{\LC}{{\check C}}
\newcommand{\LG}{{\check G}}
\newcommand{\LH}{{\check H}}
\newcommand{\LK}{{\check K}}
\newcommand{\LT}{{\check T}}

\newcommand{\LB}{{\check B}}
\newcommand{\LA}{{\check A}}

\newcommand{\LBB}{\check{\mathcal B}}
\newcommand{\LPP}{\check{\mathcal P}}

\newcommand{\Lth}{{\check\theta}}

\newcommand{\LW}{{\check W}}

\newcommand{\Lh}{{\check\h}}
\newcommand{\Lt}{{\check\t}}

\newcommand{\GR}{{G_{\RE}}}

\newcommand{\iR}{R_i}

\newcommand{\Wb}{{W_\M}}
\newcommand{\Wbp}{{W'_\M}}
\nc{\gr}{{\operatorname{gr}}}
\nc{\mhm}{{\operatorname{MHM}}}

\nc{\trans}[2]{{\on T_{#1}^{#2}}}
\nc{\etrans}[2]{{\widetilde{ \on T}_{#1}^{#2}}}

\nc{\flag}{{X}}
\nc{\hla}{{\widehat\la}}
\nc{\eflag}{{\tilde X}}
\nc{\Spec}{{\on{Spec}}}
\nc{\Ad}{{\on{Ad}}}
\nc{\Mod}{{\on{Mod}}}
\nc{\Vect}{{\on{Vect_\bC}}}
\nc{\coker}{{\on{coker}}}
\nc{\hatt}{\widehat}
\nc{\tildemod}{{\widetilde\Mod(\fg,K)}}
\nc{\perv}[2]{{\on P_{#1}(X)_{#2}}}
\nc{\der}[2]{{\on D_{#1}(X)_{#2}}}
\nc{\Coh}{{\cC{\it{oh}}}}
 \nc{\MM}{{\mathcal M}}

\newcommand{\EE}{\cE}
\newcommand{\CC}{{\mathcal{C}}}
\newcommand{\sC}{{\mathcal{S}^{gr}}}
\newcommand{\sO}{{\mathcal{S}}}

\newcommand{\rank}{{\mathrm{rank}\, }}

\newcommand{\codim}{{\mathrm{codim}\, }}

\newcommand{\supp}{{\mathrm{supp}\, }}

\newcommand{\wt}{\widetilde}

\author{Roman Bezrukavnikov}
\address{Department of Mathematics, Massachusetts Institute of Technology, Cambridge, MA 02139, USA}
         \email{bezrukav@math.mit.edu}

\thanks{Roman Bezrukavnikov was supported in part by the NSF grant DMS-1601953,
the US-Israel Binational Science Foundation grant 2016363, the Simons Foundation and by grants from the Institute for Advanced Study and
Carnegie Corporation of New York.}

\author{Kari Vilonen}

\address{School of Mathematics and Statistics, University of Melbourne, VIC 3010, Australia, and Department of Mathematics and Statistics, University of Helsinki, P.O.Box 68, FI-00014 Helsinki, Finland}
         \email{kari.vilonen@unimelb.edu.au and
 kari.vilonen@helsinki.fi}

\thanks{ 
Kari Vilonen was supported in part by the ARC grants DP150103525  and DP180101445, the Academy of Finland,  the Humboldt Foundation, the Simons Foundation and the NSF grant DMS-1402928.}

\title{Koszul duality for quasi-split real groups}

\begin{document}

\maketitle

\tableofcontents

\section{Introduction}
The main result of the paper is a proof of a significant special case of Soergel's conjecture \cite{S}
which provides a categorification for Vogan's character duality \cite{ic4}.\footnote{{\em This 
version contains a post-publication correction, replacing the erroneous Lemma 3.9 from the published paper.
All other statements remain unchanged.}}

Recall that {\em Koszul duality formalism} has been introduced to representation theory
by Beilinson, Ginzburg and Soergel \cite{S0}, \cite{BGS}. As these works demonstrate,
deep numerical information about the Bernstein-Gelfand-Gelfand category $O$ 
of highest weight representations of a complex semisimple Lie algebra $\g$, including Kazhdan-Lusztig conjectures, 
is related to Koszul property of a finite dimensional ring $A$ "controlling" the category. Moreover,
a remarkable result of \cite{S0}, further generalized in \cite{BGS}, asserts that $A$
is in fact isomorphic to its Koszul dual ring $A^!$. The category $O$ is well known
to be equivalent to the category of Harish-Chandra bimodules. This is a special case
of the category of Harish-Chandra $(\g,K)$-modules, where $\g$ and $K$ are, respectively, complexifications of the Lie algebra of a real reductive group and  of its maximal compact subgroup.   A conjectural generalization of the result of \cite{BGS} to the latter setting proposed in
\cite{S} provides a  categorical upgrade of Vogan's character duality \cite{ic4}. In the present article we prove this conjecture in the special case when the real
group in question is quasi-split.

To present further details we need some notation. Let 
$G_\RE$ be  a real semisimple algebraic group and 
 $K_\RE\subset G_\RE$  a maximal compact subgroup. We write 
$G, K$ for the complexifications of
 $G_\RE$, $K_\RE$, respectively, and $\g$, $\k$ for  the Lie algebras of $G$ and $K$.
We fix  a block $\M$ in the category of Harish-Chandra 
$(\g,K)$-modules and we assume that the generalized infinitesimal central character of modules
in $\M$ is regular and integral.
We also fix a strong real form for $G$ with underlying real form $G_\RE$.

Let $\LG$ be Langlands dual complex group. 
Vogan~\cite{ic4} assigned to the above data a (strong) real form of $\LG$
and a block $\LM$ in the category of $(\Lg, \check K)$-modules with a regular integral
infinitesimal central
 character\footnote{The infinitesimal central character is  defined up to translation by the infinitesimal
  central character
of a finite dimensional $\LG$-representation, but the equivalence class of the block is well defined.}.
We impose an additional assumption that $\LM$ is
a {\em principal block}, i.e., that $\LM$ contains a finite dimensional 
$\LG$-representation. This implies that $G_\RE$
is quasi-split. 

By the Beilinson-Bernstein Localization Theorem we can realize $\LM$ as a category
of twisted $\LK$-equivariant $D$-modules on the flag variety $\LBB=\LG/\LB$. Moreover, since $\LM$ 
contains a finite dimensional representation, we can (and will) realize $\LM$ as a subcategory
of $\LK$ equivariant $D$-modules on $\LG/\LB$ (i.e. in fact no twisting is needed).
Let $\LD$ be the full subcategory in the equivariant derived category 
$D_\LK(\LG/\LB)$ consisting of complexes whose cohomology belongs to $\LM$.

The central result of this paper is a {\em   Koszul duality} between
the categories $D^b(\M)$ and $\LD$ conjectured by Soergel~\cite{S}.
 As observed in \cite{S} and recalled below (see Remark \ref{cost_to_st}), this categorifies
Vogan's character duality in~\cite{ic4}  which is
a canonical isomorphism between the Grothendieck group $K(\M)$
and the dual space to $K(\LM)$ implying an equality between Kazhdan-Lusztig 
polynomials for $\MM$ and inverse Kazhdan-Lusztig polynomials for $\LM$. Both Vogan's isomorphism and Soergel's
conjectural equivalence have been stated more generally, when the block $\LM$
is not necessarily principal and $\GR$ may not be quasi-split.
 The present work generalizes results and methods of Soergel's
 \cite{S0} which treated the case of a complex group.

In more detail, we establish a certain relationship between the {\em triangulated} categories
on the two Langlands dual sides: the derived category of the block $\M$ for modules
with {\em generalized} integral regular infinitesimal character for the quasi-split real group $G_\bR$
and the block $\LD$ in the {\em equivariant derived category} $D_{\LK}(\LBB)$.
Namely, we show that the abelian category $\M$ is equivalent to the category
of nilpotent modules over the ring $A=Ext^\bu(\oplus L_i,\oplus L_i)$, where the $L_i$
run over the set of isomorphism classes of irreducible perverse sheaves in the block $\LD$.

By a standard formality argument this implies an equivalence of appropriate graded
versions of the triangulated categories. Observe that $D_{\LK}(\LBB)$ can be thought of
as a {\em derived version} of the category of $(\Lg,\LK)$-modules with a fixed
infinitesimal central character (when tensoring with the base field over the center 
of the enveloping algebra $U(\g)$ one needs to take into account higher Tor's, see
section \ref{sect4} for details). Thus in general our result does not yield two Koszul dual rings
controlling the representation categories on the two sides; however,
we do get such a picture in special cases, such as the previously known case of a complex
group and the new (to our knowledge) case of the principal block in a split group.

Let us recall the mechanism by which the categorical equivalence yields a {\em duality}
at the level of Grothendieck groups (as already explained in \cite{S}). By elementary
 algebra the Grothendieck group of nilpotent $A$-modules $K^0(A-mod_{nilp})$
 is dual to the Grothendieck group of all finitely generated $A$-modules
 $K^0(A-mod_{fg})$, 
 the dual bases in the two groups are provided by the classes of  
 graded irreducible modules and  
 indecomposable graded projective modules,  respectively.
Our result explained above implies that $K^0(A-mod_{nilp})\cong K^0(\M)$, while
the Grothendieck group of the block in $D_{\LK}(\LBB)$ is identified with
$K^0(A-mod_{fg})$
by sending the class of an irreducible $L_j\in D_{\LK}(\LBB)$ to the module
$Ext^\bu (\bigoplus\limits_i L_i, L_j)$; thus it yields a duality between the two 
Grothendieck groups.

We finish the introduction by indicating the key idea of our method.
Recall that a central role in Soergel's theory is played by the {\em Soergel bimodules},
which form a full subcategory in the category of coherent sheaves on $\fh^*\times _{\fh^*/W}\fh^*$,
where $\fh$ is the abstract Cartan algebra of $\g$.
An analogous role in our construction is played by the so called
{\em block variety}, $\fB=\a^*/\Wbp\times _{\fh^*/W} \fh^*$, where $\a$ is a maximal split torus in $\g$ and
 $\Wbp\subset W$ is a subgroup depending on the block $\M$. 
 The appearance of the quotient $\a^*/\Wbp$ can be motivated as follows.
 On the one hand it (or rather its completion at zero) can be realized as the space parametrizing
 deformations of an irreducible principal series module at the singular infinitesimal central character $-\rho$,
 which belongs to the image of $\M$ under the translation functor.
 On the other hand,
  the ring $\O(\fB)$ is identified with the equivariant cohomology ring 
  $H^\bu_{\LK}(\LBB)$. The argument proceeds then by describing the 
  categories of projective pro-objects in $\M$, as well as the subcategory
  of semisimple complexes in the block of $D_\LK(\LBB)$ as full subcategories
  in the coherent sheaves on the block variety (or rather its completion at zero).
  In the case when the group is not adjoint one needs to modify the above strategy by
considering coherent sheaves on the block variety equivariant with respect to a certain
finite abelian group.
  
In this paper we use a generalization of the by now classical approach initiated in \cite{S0}, \cite{BGS}, combining it with combinatorial information from Vogan's work \cite{ic4}.
Ben-Zvi  and Nadler \cite{BZN} have proposed a way to approach Soergel's conjectures via geometric Langlands duality. It would be interesting to  relate the two approaches. 
We expect that the relation of our construction to Hodge $D$-modules (see
section  \ref{sect5}) may provide a starting point for this line of investigation.

\bigskip

The paper is organized as follows. 

In section \ref{sect1} we recall some combinatorial invariants of our categories.
Most of these appear in \cite{green_book}, but we also use \cite{AdC}
as a convenient reference. 

In section \ref{sect3} we give a description of $\LD$ based on the functor
of  cohomology (derived global sections) which turns out to be faithful
on the subcategory of irreducible objects.

In section \ref{sect2} we provide a description of category $\M$
based on the functor of translation to singular central character which turns
out to be faithful on the subcategory of projective (pro)objects.

In section \ref{sect4} we formally state and prove our Koszul duality result by comparing
the two descriptions. We end the paper by section \ref{sect5}  where 
we present a  conjecture on realization of $\M$ in terms of coherent sheaves on 
the cotangent bundle via  Hodge
$D$-modules and explain its relation to our description of $\M$. 

{\bf Acknowledgements.} We are much indebted to 
 David Vogan for many helpful explanations over the years; in particular,
 the key "matching" Theorem \ref{thm_match} was explained to us by him.
We also thank Jeff Adams and Peter Trapa for useful discussions.

\section{Combinatorics of a block}\label{sect1}

In this section we recall the basic facts about Vogan duality \cite{ic4,green_book,AdC} and its underlying combinatorics in the setting explained in the introduction. The duality involves making certain choices and part of the statement is that those choices can be made consistently. 

\subsection{The equivariant (principal block) side}
\label{equivariant}
The combinatorial data related to the block $\LM$ is as follows. Let $\LC$ be a maximal torus
in $\LK$ and $\LT=Z_\LG(\LC)$ be the centralizer of $\LC$ in  $\LG$: it is a standard fact
that $\LC$ contains elements regular in $\LG$ (this is equivalent to absence of real roots
in this case, see e.g. \cite[Proposition 6.70]{Knap}),
thus $\LT$ is a maximal torus in $\LG$. Let $W(\LK)=W(\LK,\LC)=N_{\LK}(\LC)/\LC$
be the Weyl group. 

We have an involution $\Lth$ on $\LG$ with $\LG^{\Lth}=\LK$. As the torus $\LT$ is 
stable under $\Lth$, 
it induces an involution of the Weyl group $W(\LG,\LT)=N(\LT)/\LT$. 
We write $W(\LG,\LT)^\Lth$ for the subgroup  of $W(\LG,\LT)$ fixed by this involution.

We can choose Borel subgroups $B_\LK\subset B_\LG$ of $\LK$ and $\LG$ containing $\LC$,
then $B_\LG$ contains $\LT$ so this choice yields an identification between
$\LT$ and the abstract Cartan group $\LH$. Note that $B_\LG$ is $\Lth$-stable, since $\Lth$ fixes a regular
element in $B_\LG$. It follows  that varying the choice above we get identifications $\LT\cong \LH$ which are permuted 
by the group $W(\LG,\LT)^\Lth$ , in particular, the induced involution on $\LH$ and on the abstract
Weyl group $\LW$ of $\LG$ is independent of the choice.
This involution will also be denoted by $\Lth$. 

Note that the group $\LK$ may be disconnected. We have a homomorphism
$W(\LK)\to \pi_0(\LK)$ which is onto. Its kernel is the group $W(\LK^0)=W(\LK^0,\LC)$ where $\LK^0$
is the identity connected component in $\LK$.  Note that the group $\pi_0(\LK)\cong\pi_0(\LC)$ is an elementary abelian 2-group, i.e., it is isomorphic to $(\Zet/2\Zet)^r$ for some $r$. Thus we have 
$$ 
W(\LK^0)\subset W(\LK)\subset W(\LG,\LT)^\Lth.
$$

Finally, we need a count of the closed $\LK$-orbits on the flag manifold $\LBB=\LG/\LB$. 
Let $(\LK\bs \LBB)_{cl}$ be the set of closed $\LK$ orbits on $\LBB$. 

The set of $\LT$ fixed points carries an action of $W(\LG,\LT)\times \LW$, where the $W(\LG,\LT)$ action comes from the
action of the normalizer of $\LT$ by conjugation, while the action of the abstract Weyl group $\LW$ is such that
the action of a simple reflection $s_\al$ sends a fixed point $x$ to the unique fixed point $y\ne x$ which
has the same image in the partial flag variety $\LPP_\al$ parametrizing minimal parabolics of type $\al$. 
We write the $W(\LG,\LT)$ action as a left action and the $\LW$ action
as a right one. Both actions are simply transitive. Choosing a point $x\in \LBB^\LT$ we get an identification
$W(\LG,\LT)\cong \LW\cong \LBB^\LT$ such that the above action of $W(\LG,\LT)\times \LW$ on $\LBB^\LT$ is given
by $(w,v):x\mapsto wxv^{-1}$.

For an orbit $O\in \LK\bs \LBB$ the intersection $O\cap \LBB^\LT$ is either empty or 
it is  a coset of $W(\LK)$; moreover, the intersection is nonempty if the orbit is closed.
Thus we get an injective map $\phi:(\LK\bs \LBB)_{cl}\to W(\LK)\bs \LBB^\LT$. Note that the $\LW$ action on $\LBB^\LT$ descends to an action
on $W(\LK)\bs \LBB^\LT$. 

\begin{Claim}\label{21}
The map $\phi$ identifies $(\LK\bs \LBB)_{cl}$ with an orbit of the group $\LW^\Lth\subset \LW$.

\end{Claim}

Abusing notation we will write $(\LK\bs \LBB)_{cl}\cong W(\LK)\bs \LW^\theta$ where it is 
understood that the embedding $W(\LK)\to \LW^\Lth$ is well defined only up to conjugacy by 
$\LW^\Lth$.

Let us now consider the above situation on the level of Lie algebras. Then $\Lc$ is a maximal torus in $\Lk$ and $Z_{\check \fg}(\check \fc)=\check \ft$ is a maximal $\Lth$-stable torus in $\check \fg$. We write $\Lh$ for the abstract Cartan algebra of $\Lg$. 
 Any fixed point of the $\Lth$-stable torus $\check T$ on $\LBB$ gives rise to an  identification between $\Lt$ and $\Lh$ and these identifications are related by the Weyl group action. Hence, there is a canonical identification $\Lt/W = \Lh/W$. 
 Thus we obtain a canonical map $\Lc/W(\LK^0) \to \Lh/W$. In this setting we define the block variety as  $\fB_{eq}= \Lc/W(\LK^0)\times _{\Lh/W} \Lh$. The group $\fS_{eq}=W(\LK)/W(\LK^0)$ acts on $\fB_{eq}$ via its natural action on the first factor.

\subsection{The monodromic (quasisplit) side}
\label{quasisplit}
Recall from the introduction that we are considering a quasi-split real
form $G_\RE$ and a fixed block in the category of (finite length) $(\g,K)$-modules
which we denoted by $\M$. All modules in the block have the same
generalized infinitesimal central character which is assumed to be regular
and integral; thus it corresponds to a uniquely defined dominant weight denoted
by $\la$. 

We identify $\M$ with its image under the localization equivalence
between the category of $(\g,K)$-modules and the category of
$K$-equivariant twisted $D_{\widehat{\la}}$-modules on the flag variety $\BB=G/B$ as in \cite{Be1};
here $D_{\widehat{\la}}$ denotes an infinitesimally deformed twisted differential 
operators ring corresponding to the line bundle $\O(\lambda)$.

A ($K$-equivariant) $D_{\widehat{\la}}$ module is  by definition the same as a ($K$-equivariant)
$D$-module on the extended flag manifold $X=G/U$, where $U$ denotes the unipotent radical of $B$,  which is weakly equivariant 
 with respect to the right 
action of the universal Cartan $H\cong B/U$ with the  generalized eigenvalues of the  log monodromy determined
by $\la$. We further identify the latter category, via the Riemann-Hilbert correspondence, with the category of $K$-equivariant $H$-monodromic perverse sheaves on $X$ where the  monodromy is determined by\footnote{Here by monodromy we mean monodromy on the image under the De Rham functor,
rather than on the dual functor of solutions.} 
$\exp(2\pi i \la)$.
 In what follows we mostly work in terms of the (twisted) perverse sheaves.

An irreducible object in $\M$ 
 is of the form $j_{!*}(\LL)$ where $j:O\to X$
is the embedding of a $K\times H$-orbit and $\LL$ is an irreducible
$K$-equivariant and $H$-monodromic local system.

We say that the pair $(O,\LL)$ belongs to the block if $j_{!*}(\LL)\in \MM$.
We let  $\LL_\MM(O)$ denote the set of local systems on $O$ belonging to the block
$\MM$. Let $Irr(\MM)$ be the set of irreducible objects in $\MM$,
thus we have $Irr(\MM)\cong \cupl _{O\in K\bs G/B} \LL_\MM(O)$.

For the rest of this section the orbit $O$ will be the open $K\times H$-orbit $X_0$ on $X$ which is the inverse image of the open $K$-orbit $\BB_0\subset \BB$. 

We have a Cartan involution $\theta$ on $G$ such that $K= G^\theta$. We fix a $\theta$-stable maximal torus $T$ of $G$ which is maximally split, i.e. such that $T^\theta$ is of minimal dimension. In that case $T$ has a fixed point in the open orbit $\BB_0$. Fixing such a point we get an identification between $T$ and the abstract Cartan $H$.
This defines an involution $\theta$ on $H$ which is independent of any choices.
 We let 
$\fa$ be the $(-1)$ eigenspace of $\theta$ acting in $\fh$. 



As $T$  is a maximally split torus the real Weyl group $W_\bR= W(K,T)$ coincides with $W^\theta$. This follows, for example, from~\cite[Propositions 3.12 and 4.16]{ic4} as for $T$ there are no non-compact imaginary roots.  A $T$ fixed point $x\in \BB^T$ determines its $K$-orbit on $\BB$; we say that $K$-orbits arising this way are {\it attached} to $T$. These include the open orbit $\BB_0$. We will use this  terminology in general, i.e., given any $\theta$-stable maximal torus $T'$ we say that $K$-orbits containing  fixed points of $T'$  are {\it attached} to the maximal torus $T'$. 

We recall, as is rather easy to see, that all blocks have representatives on the open orbit, i.e., that $\LL_\cM(X_0)$ is not empty. As the action of $K\times H$ is transitive on $X_0$
and the reductive part of the stabilizer is isomorphic to $T^\theta$, the set $\LL_\MM(X_0)$ is identified with a subset
in the set of characters of $T^\theta$ whose differential is determined by the infinitesimal central character $\la$. The latter set is a torsor over
the set of characters of $\pi_0(Stab_{K\times H}(\tilde{x}))=
\pi_0(Stab_K(x))=\pi_0(T^\theta)$ for a point $x\in \BB_0$ and its lift $\tilde x\in X_0$.
The finite group $\pi_0( T^\theta)$ is isomorphic to a product of a finite number of copies of $\Zet/2\Zet$.

\medskip

We will make use of  the {\em cross-action} of $W$ on the set $Irr(\MM)$,
\cite[Definition 8.3.1]{green}, \cite[sections 9 and 14]{AdC}. We will summarize some properties of the cross action that will be important to us  in section \ref{cross}.

In particular, $W$ acts on the set $\cup \LL_\cM(O)$ where $O$ runs through the orbits attached to the Cartan $T$. The subgroup $W_\bR=W^\theta$ of $W$ 
 preserves the subset $\LL_\cM(X_0)$. We will need the following consequence of \cite[Theorem 8.5]{ic4}. 

\begin{Claim}
\label{Claim 2}
The action of $W^\theta$ on the set $\LL_\cM(X_0)$ is transitive.
\end{Claim}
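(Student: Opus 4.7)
The plan is to transfer the statement via Vogan duality to the dual side, where the transitivity becomes an almost immediate consequence of Claim~\ref{21} together with the principal block hypothesis on $\LM$.

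Step 1 (setup and cross action). I would first identify $\LL(X_0)$ — which, in the context of the block $\MM$, means $\LL_\MM(X_0)$ — with a subset of the character group $\widehat{\pi_0(T^\theta)}$, using the chosen $T$-fixed point $x\in\BB_0$ and a lift $\tilde x\in X_0$ as in the discussion preceding the claim. Since $W^\theta$ commutes with $\theta$, it preserves $T^\theta$, and by the description of the cross action on irreducibles attached to a single Cartan (to be recorded in section~\ref{cross} following \cite{green_book} and \cite[sections 9 and 14]{AdC}), its cross action on $\LL(X_0)$ coincides with the natural action of $W^\theta$ on characters of $\pi_0(T^\theta)$.

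Step 2 (Vogan duality). Vogan's duality \cite{ic4} supplies a bijection $\Psi\colon\mathrm{Irr}(\MM)\iso\mathrm{Irr}(\LM)$ that reverses the closure order on the supporting orbits and intertwines the cross actions of $W=\LW$, hence of $W^\theta=\LW^\theta$. Under $\Psi$, the set $\LL_\MM(X_0)$ of irreducibles supported on the open $K\times H$-orbit $X_0$ is matched bijectively with the set of irreducibles in $\LM$ whose support is a \emph{closed} $\LK$-orbit in $\LBB$.

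Step 3 (transitivity on the dual side). On the dual side, this set is a disjoint union, over the closed $\LK$-orbits $C$, of local systems in $\LM$ on each. By Claim~\ref{21}, $\LW^\theta$ acts transitively on closed orbits with stabilizer $W(\LK)$. On a fixed closed orbit $C\cong\LK/\LB_\LK$, the stabilizer component group is $\pi_0(\LK)=W(\LK)/W(\LK^0)$; the principal block assumption on $\LM$ ensures that the local systems in $\LM$ on $C$ exhaust the character group $\widehat{\pi_0(\LK)}$, and $W(\LK)$ acts on them transitively through its quotient $\pi_0(\LK)$ by the translation action. Combining these, $\LW^\theta$ is transitive on the set of (closed orbit, local system) pairs, and pulling this back through $\Psi$ gives the claim.

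The main obstacle lies in Step~2: one must verify that $\Psi$ intertwines the cross actions at the level of the explicit combinatorial parameters — not just as a bijection of irreducibles — and in particular that it converts the natural $W^\theta$-action on $\widehat{\pi_0(T^\theta)}$ into the action of $\LW^\theta$ on closed orbits combined with the translation action of $W(\LK)$ on $\widehat{\pi_0(\LK)}$. Both facts are implicit in Vogan's formalism (including the Cayley-transform identifications that enter the definition of the cross action on noncompact Cartans), but they require careful bookkeeping in the present setting with a fixed strong real form.
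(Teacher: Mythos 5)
Your overall strategy --- transfer to the dual side and invoke Claim~\ref{21} --- is indeed the route the paper takes, implicitly, via Theorem~\ref{thm_match} and its citation of~\cite[Theorem 10.3]{AdC}, which states that the cross action of $W^\theta$ on $\LL_\MM(X_0)$ is carried to the usual $W^\theta$-action on closed $\LK$-orbits on $\LBB$. So Steps 1 and 2 are in the right spirit, and you are right to flag the bookkeeping in Step 2 as the main thing to verify. The problem is Step~3: you match $\LL_\MM(X_0)$ with the set of \emph{pairs} (closed $\LK$-orbit, $\LK$-equivariant local system) with the local systems running over all of $\widehat{\pi_0(\LK)}$, but this overcounts. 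The duality from~\cite[Theorem 10.3]{AdC}, as recorded in the proof of Theorem~\ref{thm_match}, identifies $\LL_\MM(X_0)$ with the set of closed $\LK$-orbits themselves --- one irreducible in the dual block per closed orbit --- while it is $\LL_\MM^{ad}(X_0)$ that is identified with closed $\LK_{sc}$-orbits (equivalently, with the larger set you describe). A cardinality check makes the mismatch explicit: transitivity with stabilizer $\Wb\cong W(\LK)$ forces $|\LL_\MM(X_0)|=|W^\theta/W(\LK)|$, which is the number of closed $\LK$-orbits by Claim~\ref{21}; your set of (orbit, local system) pairs has size $|W^\theta/W(\LK)|\cdot|\pi_0(\LK)|=|W^\theta/W(\LK^0)|=|W^\theta/\Wbp|$, which equals $|\LL_\MM^{ad}(X_0)|$, strictly larger unless $\LK$ is connected. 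In effect your Step~3, if it worked, would establish the transitivity statement for $\LL_\MM^{ad}(X_0)$ (also asserted in the paper a few lines later), not the one for $\LL_\MM(X_0)$. The fix is to drop the translation-action argument entirely: once $\Psi$ is known to biject $\LL_\MM(X_0)$ with closed $\LK$-orbits compatibly with the $W^\theta$-actions, transitivity is immediate from Claim~\ref{21} with nothing further to say.
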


 The Claim shows that choosing  $\cL_0\in \LL_\cM(X_0)$ and setting $\Wb=Stab_{W^\theta}(\cL_0)$ 
we get a bijection between the set $\LL_\cM(X_0)$ and $W^\theta/\Wb$.

We will also need a comparison between the relevant data for $G$
and for the adjoint group $G_{ad}$ equipped with the compatible real
form. 

Let $K'\subset G$ be the subgroup generated by $K$ and the center $Z(G)$ of $G$.
Note that the finite group $Z(G)^\theta=Z(G)\cap K$ acts on all modules in $\M$ by the same
character which we denote by $\chi$. Fix an extension $\tilde \chi$ of $\chi$
to a character of $Z(G)$. It is easy to see that the category of $(\g,K)$ modules
on which $Z(G)\cap K$ acts by $\chi$ is equivalent to the category of $(\g, K')$
modules where $Z(G)$ acts by $\tilde \chi$. We identify $\M$ with its image under
that equivalence; the localization theorem identifies this category with 
a subcategory in  $K'$-equivariant $D_{\widehat{\la}}$ modules.
In particular, the elements of $\LL_\MM(X_0)$ can be viewed as $K'$-equivariant local
systems on the corresponding orbits. 

Let $K_{ad}$ denote the fixed points of the involution of $G_{ad}$ compatible
with the  one fixed on $G$. Let $\widetilde K_{ad}$ be the preimage of $K_{ad}$ 
under the homomorphism $G\to G_{ad}$. 
 The group $\tilde K_{ad}/K'= K_{ad}/Im(K)$ is abelian, 
we let $\fS_{mon}=(K_{ad}/Im (K))^*$ be the dual
abelian group.

Let $\LL_\MM^{ad}(X_0)$ be the set of $\widetilde K^{ad}$ equivariant local systems
 on $X_0$ which,  viewed as a $K'$-equivariant local system, belong to 
 $ \LL_\MM(X_0)$. 
For $x\in \BB_0$ we have a short exact sequence of abelian groups
$$\{1\}\to \pi_0(Stab_{K'}(x)) \to \pi_0(Stab_{\widetilde K^{ad}}(x))\to \fS_{mon}^* \to \{1\}.$$ 
Thus we have a free action of $\fS_{mon}$ on $\LL_\MM^{ad}(X_0)$, such that
$\LL_\MM^{ad}(X_0)/\fS_{mon} = \LL_\MM(X_0)$.

The cross action lifts to an action of $W^{\theta}$ on $\LL_\MM^{ad}(X_0)$,
which is also transitive.
We write $\Wbp$ for the stabilizer of a point $\cL_0^{ad}$ in $\LL_\MM^{ad}(X_0)$
under this action. We choose the base point $\cL_0^{ad}$ compatibly with the choice of $\cL_0$ fixed in the paragraph
after Claim \ref{Claim 2}.  
With this choice we have  $\Wbp \subset \Wb$.
The considerations above show that we have a short exact sequence of groups:
\begin{equation}\label{Wbprime}
\{1\}\to \Wbp \to \Wb \to \fS_{mon} \to \{1\}.
\end{equation}
Finally, we can define the block variety in terms of these data as $\fB_{mon}=\a^*/\Wbp\times _{\fh^*/W} \fh^*$. The group $\fS_{mon}$ acts on $\fB_{mon}$ via its action on the first factor which comes from the exact sequence  \eqref{Wbprime}. 

\begin{Rem}\label{Bmoncan}
The definition of $\fB_{mon}$ appears to depend on the choice of $\LL_0^{ad}$
which determines the subgroup $\Wbp=\Wbp(\LL_0^{ad})$.  
 However, we have 
 \begin{equation}\label{faWp}
 \fa^* /\Wbp = (\LL_\MM^{ad}(X_0)\times \fa^*)/W^\theta.
 \end{equation}
Here the right hand side is independent of any choices, thus
 we get a canonical description of $\fB_{mon}$.
 
One can think of  the formal completion of $(\LL_\MM^{ad}(X_0)\times \fa^*)/W^\theta$ at 
$(\LL_\MM^{ad}(X_0)\times \{0\})/W^\theta$
as parametrizing formal deformations of a local system in 
$\LL_\cM^{ad}(X_0)$ modulo the action of $W^\theta$ induced by the cross action.
  
It is also easy to see that the $\fS_{mon}$ action on $\fB_{mon}$ is canonical.

Note, however, that the map $\fa^*\to \fB_{mon}$ does depend on the additional choice.
\end{Rem} 

\subsection{Matching}
\label{matching}

In the  previous subsections we have defined two versions of the block variety $\fB_{eq}$ and $\fB_{mon}$ with actions of  $\fS_{eq}$ and $\fS_{mon}$, respectively. We now make use of the combinatorics of Vogan duality~\cite{ic4} to show that these two constructions yield the same variety. This is a crucial ingredient in our arguments. 

In the discussion above we have defined a set of local systems $\LL_\MM^{ad}(X_0)$. We can also think of these local systems as belonging to a block $\cM^{ad}$ of  $(\fg,K_{ad})$-modules, but to do so we have to move to a different infinitesimal character. To this end we choose an irreducible representation $V_\mu$ of highest weight $\mu$ such that the center $Z(G)$ acts on $V_\mu$ by the character $\widetilde \chi^{-1}$. We now make use of the translation functor $T_{\la\to\la+\mu}$  (which is an equivalence of categories, of course) to translate our set up from the generalized infinitesimal character $\la$ to the generalized infinitesimal character $\la+\mu$. After the translation the modules $\LL_\MM^{ad}(X_0)$ belong to a particular block $\cM^{ad}$ of  $(\fg,K_{ad})$-modules. More precisely, we have
\begin{equation*}
T_{\la\to\la+\mu}\ \text{carries the set} \ \LL_\MM^{ad}(X_0) \ \text{to the set}\  \LL_{\MM^{ad}}(X_0)\,.
\end{equation*}

After the translation, the blocks $\cM$ and $\cM^{ad}$ can be compared directly as the modules in $\cM$ consist now of certain $(\fg,Im(K))$-modules. 

Recall that fixing a $K$ orbit on $G/B$ defines an involution of the abstract Cartan $\h$ and similarly for $\LG$;
the involution $\theta$ of $\h$ comes from the open $K$ orbit, while the involution $\Lth$ comes from
any one of the closed orbits of $\LK$, it is well known that $\Lth$ does not depend on the choice of closed
orbit.
We have:

\begin{Thm}\label{thm_match}
a) The  canonical isomorphism of abstract Cartan Lie algebras 
$\h^*\cong \Lh$ sends the  involution $\theta$ to $-\Lth$.

b) The isomorphism $W^\theta \cong \LW^\Lth$
arising from (a) sends the conjugacy class of the
pair of subgroups $\Wbp\subset \Wb$ to that of 
the pair $W(\LK^0)\subset W(\LK)$.  

\end{Thm}

\proof
This is a direct consequence of the combinatorial matching underlying Vogan character duality \cite{ic4}, see also \cite{AdC}.
As follows from \cite[Corollary 10.8]{AdC}  we have a canonical bijection between the sets of irreducible objects  $Irr(\MM)$ and $Irr(\LM)$. 
It is compatible with the cross action of $W$. Notice that the stabilizer of point in a closed $\LK$ orbit on $\LBB$
is connected, so an equivariant local system on such an orbit is trivial, thus we have an embedding
$(\LK\bs \LBB)_{cl}\subset Irr(\LM)$; the action of $\LW^\Lth$ on $(\LK\bs \LBB)_{cl}$ in Claim \ref{21}
is easily seen to be the restriction of the cross action.

 The bijection between $Irr(\MM)$ and $Irr(\LM)$ satisfies the following additional compatibility.
An  object $L\in Irr(\MM)$ defines a $K$-orbit on $\BB$ which in turn defines an involution $\theta_L$
on the abstract Cartan $\h$, and similar for $\LM$. For matching elements $L\in Irr(\MM)$ and $\check{L}\in Irr(\LM)$
we have: $\theta_L=-\theta_{\check{L}}$ where we have identified automorphisms of $\Lh$ and of $\Lh=\h^*$.
It is easy to see that for $G$ quasisplit a $K$ orbit in $\BB$ is open if and only if the corresponding involution
sends positive roots to negative roots. Similarly, an orbit is closed if and only if the corresponding involution
sends positive roots to positive roots. Thus the bijection identifies $\cL_\MM(X_0)$  with $(\LK\bs \LBB)_{cl}$.
This implies (a) and the part of (b) involving $\Wb$ and $W(\LK)$. The matching between $\Wbp$
and $W(\LK^0)$ follows from existence of a compatible bijection for the corresponding blocks for
the simply connected cover  of $\LG$ and the adjoint quotient $G_{ad}$. \qed

\begin{Rem}
The isomorphism of Theorem \ref{thm_match} induces a canonical isomorphism
between $\fa^*=(\h^*)^{-\theta}$ and $\Lh^\Lth$. Notice that  $\Lh^\Lth\cong \Lc$
noncanonically. 
The resulting isomorphism $\fa^*\cong \Lc$ is defined uniquely up to  action
of $W^\theta$.
\end{Rem}

\begin{Cor} 
We have isomorphisms $\fB_{eq}\cong \fB_{mon}$,
$\fS_{eq}\cong \fS_{mon}$ compatible with the action
of the latter on the former.  
\end{Cor}

\proof
We choose compatible base points $S_0\in (\LK \bs \LB)_{cl}$,  $S_0^{sc}\in (\LK^0 \bs \LB)_{cl}$ and also
$\cL_0\in \LL_\cM(X_0)$, $\cL_0^{ad}\in \LL_\MM^{ad}(X_0)$ so that the subgroups  $\Wbp\subset \Wb \subset W^\theta$
correspond to   $W(\LK^0)\subset W(\LK)\subset \LW^\Lth$ under the canonical identification $W^\theta = \LW^\Lth$.

This yields a particular choice of an isomorphism $\fa^*\cong \Lc$ intertwining the action of $\Wb\cong W(\LK)$,
thus inducing an isomorphism 
between $(\fB_{eq},\fS_{eq})$ and $(\fB_{mon},\fS_{mon})$.
 \qed

We use the isomorphism to identify  $(\fB_{eq},\fS_{eq})$ with $(\fB_{mon},\fS_{mon})$ and will denote the resulting pair by $(\fB,\fS)$ from now on.

\medskip

It follows from the next Lemma that
if $G$ is simply connected then the subgroup $\Wb \subset W^\theta$ is the normalizer\footnote{In many but not all
cases it is also true that $\Wb$ is self-normalizing. A counterexample is provided by the principal block for $G_\RE=Sp(4, \RE)$: in this case $W^\theta=W$, while $\Wb$ is generated
by reflections at short roots.}
 of $\Wbp$.
Thus the choice of a compatible matching is unique up to  action of the abelian group $\fS=\Wb/\Wbp$ and the 
isomorphism between block varieties is also unique up to $\fS$ action, resulting in a {\em canonical}
equivalence between the categories of $\fS$-equivariant coherent sheaves.

 \begin{Lem} If $\LG$ has trivial center then $W(\LK,\LC)$ coincides with the normalizer of   
 $W(\LK^0,\LC)$ in  $W(\LG,\LT)^\Lth$.
 \end{Lem}
 
 \proof We will abbreviate $W(\LK,\LC)$,
 $W(\LK^0,\LC)$,  $W(\LG,\LT)$ to  $W(\LK)$,
 $W(\LK^0)$,  $W(\LG)$. 
 It is clear that $W(\LK)$ normalizes $W(\LK^0)$, we only need to check that the normalizer $N=N_{W(\LG)^\Lth}(W(\LK^0))$ is contained in $W(\LK)$.

 By \cite[Proposition 3.12(c)]{ic4} we have $W^\Lth=(W^C)^\Lth \ltimes W^{\iR }$ 
where we use notation of {\em loc. cit.}, in particular $W^{\iR }$ is generated by $s_\al$, $\alpha\in \iR $. The group $(W^C)^\Lth$ is generated by elements $s_\al s_{\Lth(\al)}$
for certain roots  $\al$  such that $\al\ne \Lth(\al)$ and $\al\pm \Lth(\al)$ is not a root. If $\tilde s_\al\in SL(2)_\al$ is a representative for $s_\alpha$ (where $SL(2)_\al$
is the image of the corresponding homomorphism $SL(2)\to \LG$) then 
$\tilde s_\al \Lth(\tilde s_\al)=\Lth(\tilde s_\al)s_\al\in \LK$, so $(W^C)^\Lth \subset W(\LK)$. It remains to show that 
\begin{equation}\label{NWir}
N\cap W^{\iR }\subseteq W(\LK).
\end{equation}
 
 Let $R$ be the set of roots for $\LT$ in $\LG$ and $\iR  =\{\al \ |\ \Lth(\al)=\al\}$ be the subset of imaginary roots.
 The involution $\Lth$ induces a $\Zet/2\Zet$ grading $\epsilon$ on $\iR $ where $\epsilon(\al)=0$ iff $\Lth | _{\Lg_\alpha}=Id$. 
It is well known that under the above assumptions $W(\LK)=Stab_{W(\LG)^\Lth}(\epsilon)$. To prove it one observes that
 for $w\in W(\LG)^\Lth$ with a representative $\tilde w$ in the normalizer of $\LT$ we have
$\tilde w\in W(\LK)$ iff the element $t_{\tilde w}=\theta(\tilde w) \tilde w^{-1}\in \LT$ can be written as $t_{\tilde w}= \theta(t)t^{-1}$ for some $t\in \LT$. If $\tilde w$ preserves the grading then $t_{\tilde w}$ is annihilated
by all imaginary roots. By the choice of $\LT$ there exists a $\Lth$ invariant Borel $\LB\supset \LT$, so there are no real roots (i.e. roots with $\Lth(\al)=-\al$). 
We can choose a subset of (complex) simple roots $S$ such that $R=S\sqcup \Lth(S)\sqcup \iR $ and define $t$ by $\alpha(t)=\alpha(t_{\tilde w})^{-1}$ for $\alpha\in S$ and  $\alpha(t)=1$
otherwise, then clearly $t_{\tilde w}$ and $\theta(t)t^{-1}$ have the same pairing with every root, so they are equal since $\LG$ is adjoint. 

We proceed to check \eqref{NWir}. 
It is clear that if $\epsilon(\alpha)=0$ then $s_\al\in W(\LK^0)$. We claim that the converse is true unless
  $\alpha$ is isolated\footnote{It may in fact fail if $\alpha$ is isolated, for  example if  $\LG_\RE=PGL(3,\RE)$ then $\Lth$
  swaps the two simple roots of the $A_2$ root system, while the highest root $\alpha$ satisfies $\theta(\alpha)=\alpha$, $\epsilon(\alpha)=1$, $s_\alpha\in W(\LK^0)$.}
    in $\iR $, i.e.   $\alpha$
is orthogonal to all $\beta\in \iR $,  $\beta \ne \pm \alpha$:
\begin{equation}\label{even_im}
s_\al \in W(\LK^0) \Rightarrow \epsilon(\al)=0 \vee (\al,\beta)=0 \ \forall \beta\in \iR, \beta\ne \pm \al. 
\end{equation}
 If a root $\alpha\in \iR$ is isolated then $\{\pm \alpha\}$ is invariant under $W^{\iR }$, so \eqref{even_im}
 implies \eqref{NWir}. 
To check \eqref{even_im}
observe that if there exists a root $\beta\in \iR $
such that $2\frac{(\alpha,\beta)}{(\alpha,\alpha)}$ is odd then 
$\epsilon(s_\al(\beta))=\epsilon(\al)+\epsilon(\beta)$, so 
$s_\alpha\in Stab(\epsilon)\Rightarrow \epsilon(\al)=0$.
Suppose now that no such $\beta$ exists and $\al$ is not isolated. Then 
 the irreducible summand in $R$ containing $\alpha$ has to be of type $B_n$ ($n\geq 2$) where $\alpha\in B_n$ is short.
We can assume without loss of generality that $\LG$ is simple. Since its Dynkin graph is not simply laced it has no
nontrivial automorphisms, thus $Aut(\LG)=\LG$, in particular $\Lth$ is inner. It follows
that $\LC=\LT$ and the roots of $\LC=\LT$ in $\LK$ are exactly the roots $\alpha\in \iR =R$ with $\epsilon(\al)=0$. 
Thus by the standard theory of Weyl groups of reductive groups, reflections in $W(\LK^0)$ are exactly $s_\al$, $\epsilon(\al)=0$. 
\qed
 
\section{Principal block in the equivariant derived category}
\label{sect3}

In this section we consider a real form of $\LG$ and we write $\LK$ for the complexification of the maximal compact subgroup of the real form. We work with the principal block $\LD$ in the category $\od^b_\LK(\LBB)$ and make use of the other notation introduced in section~\ref{sect1}. In particular, recall that $\Lc$ stands for a maximal torus in $\Lk$, that $Z_{\check \fg}(\check \fc)=\check \ft$ is a maximal $\Lth$-stable torus in $\check \fg$, and that we write $\Lh$ for the abstract Cartan algebra of $\Lg$. Recall also that $W=\check W$, and so we will often denote the Weyl group just by $W$.

We begin by considering the case when the group $\LK$ is connected.  In particular,  ${H^{*}}(\text{pt}/\check K) \cong \Ce[\check \fc]^{W(\check K,\check \fc)}$ and we have a canonical map $\Lc/W(\LK) \to \Lh/W$.

\begin{Lem}
\label{Global equivariant cohomology}
We have a canonical isomorphism:
$H^*_{\LK}(\LBB)=\Ce[\fB].$
\end{Lem}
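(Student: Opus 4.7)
The plan is to reduce to a torus computation followed by taking Weyl-group invariants. Since $\LK$ is connected with maximal torus $\LC$, we have $H^*_\LK(\LBB) \cong H^*_\LC(\LBB)^{W(\LK)}$, where $W(\LK) = N_\LK(\LC)/\LC$ acts on $H^*_\LC(\LBB)$ through its natural action on $\LC$. I would then split the task into computing $H^*_\LC(\LBB)$ and identifying the $W(\LK)$-invariants with $\Ce[\fB]$.

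For the first piece, I would exploit that $\LC \subset \LT = Z_\LG(\LC)$, so the $\LC$-action on $\LBB$ factors through $\LT$. The classical Borel presentation (equivalently, equivariant formality together with the GKM description) gives
$$H^*_\LT(\LBB) \cong \Ce[\Lt] \otimes_{\Ce[\Lh]^W} \Ce[\Lh],$$
where the first factor is $H^*_\LT(\text{pt})$ and the second lifts $H^*(\LBB) = \Ce[\Lh]/(\Ce[\Lh]^W_+)$. Since this is a free module over the first factor and $\LBB$ remains equivariantly formal for the $\LC$-action (by the Bialynicki-Birula decomposition for a generic cocharacter of $\LC$, together with $\LBB^\LC = \LBB^\LT$), restriction of equivariance along $\LC \hookrightarrow \LT$ is base change and yields
$$H^*_\LC(\LBB) \cong H^*_\LT(\LBB) \otimes_{\Ce[\Lt]} \Ce[\Lc] \cong \Ce[\Lc] \otimes_{\Ce[\Lh]^W} \Ce[\Lh] = \Ce[\Lc \times_{\Lh/W} \Lh].$$

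To finish, I would take $W(\LK)$-invariants. The step I expect to require the most care is verifying that $W(\LK) \subseteq W^\Lth \subseteq W$ acts on this presentation through its natural action on the first factor $\Ce[\Lc]$ while fixing the second factor $\Ce[\Lh]$. I would check this through the localization embedding $H^*_\LC(\LBB) \hookrightarrow \bigoplus_{w \in W} \Ce[\Lc]$ at the fixed points $\LBB^\LC = \LBB^\LT$: the first factor embeds diagonally, so the combined permute-plus-conjugate action of $W(\LK)$ on the direct sum restricts to the natural conjugation on $\Ce[\Lc]$, whereas the second factor embeds as tuples $(w \cdot g|_\Lc)_{w \in W}$, which a direct calculation shows is preserved by the combined action. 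Given this, and since $\Ce[\Lh]$ is a free $\Ce[\Lh]^W$-module by Chevalley--Shephard--Todd, finite-group invariants in characteristic zero commute with the tensor product, so
$$H^*_\LK(\LBB) \cong \Ce[\Lc]^{W(\LK)} \otimes_{\Ce[\Lh]^W} \Ce[\Lh] \cong \Ce[\Lc/W(\LK) \times_{\Lh/W} \Lh] = \Ce[\fB],$$
where the last equality uses $W(\LK^0) = W(\LK)$ for connected $\LK$.
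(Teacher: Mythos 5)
Your proof is correct, but it runs on a genuinely different track from the paper's. The paper constructs the comparison map all at once from the pullback square
\[
\begin{CD}
\check K\backslash \LG/\check B @>>> \text{pt}/\check K \\
@VVV @VVV \\
\text{pt}/\check B @>>> \text{pt}/\check G
\end{CD}
\]
(replacing $\text{pt}/\check B$ by $\text{pt}/\check H$ up to homotopy), producing a canonical map $\Ce[\Lh]\otimes_{\Ce[\Lh]^W}\Ce[\Lc]^{W(\LK)}\to H^*_\LK(\LBB)$, and then shows it is an isomorphism by a single application of the Serre spectral sequence for $\check K\backslash \LG/\check B\to \text{pt}/\check K$, which degenerates because all terms sit in even degree. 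You instead reduce to the maximal torus $\LC$ first via $H^*_\LK(\LBB)=H^*_\LC(\LBB)^{W(\LK)}$, compute $H^*_\LC(\LBB)$ by base change from the standard Borel presentation of $H^*_\LT(\LBB)$ using equivariant formality of the $\LC$-action, and finally pass to $W(\LK)$-invariants, checking via localization at $\LBB^\LC=\LBB^\LT$ that the $W(\LK)$-action lives entirely on the $\Ce[\Lc]$ factor. Both are valid: the paper's version is shorter and packages the degeneration into one step; yours is more modular, makes the role of $\LT$-equivariant cohomology and GKM-type localization explicit, and is in fact the same reduction $H^*_\LK=(H^*_\LC)^{W(\LK)}$ that the paper invokes later anyway (in the proof of Theorem \ref{RGamma_full}). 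One small streamlining to your argument: the fact that $W(\LK)$ fixes the image of $\Ce[\Lh]$ can be seen directly from the Borel construction, since $W(\LK)\subset\LG$ acts trivially on $E\LG\times_\LG\LBB$, which avoids the fixed-point computation you sketch; and the Chevalley freeness you cite is not actually needed for the invariants to commute with the tensor product, since averaging over a finite group in characteristic zero is an $R$-linear idempotent whenever $R$ is $G$-fixed.
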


\begin{proof}
Let us choose a particular Borel $\check B$ in $\check G$. This gives us 
 the following diagram on the level of spaces
\begin{equation}\label{41}
\begin{CD}
\check K\backslash \LG/\check B @>>> \text{pt}/\check K
\\
@VVV @VVV
\\
 \text{pt}/\check B @>>>  \text{pt}/\check G\,.
\end{CD}
\end{equation}
Note that up to homotopy we can replace $ \text{pt}/\check B$ by  $\text{pt}/\check H$ so the diagram itself is independent of the choice of $\LB$.

Passing to cohomology and using the canonical isomorphisms ${H^{*}}(\text{pt}/\check K) \cong \Ce[\check \fc]^{W(\check K)}$,  ${H^{*}}(\text{pt}/\check G) \cong \Ce[\check \fh]^W$, and  ${H^{*}}(\text{pt}/\check H ) \cong \Ce[\check \fh]$ give us a canonical map
\begin{equation*}
\ \Ce[\check \fh]\otimes_ {\Ce[\check \fh]^W}\Ce[\check \fc]^{W(\LK)} \to H^*_{\LK}(\LBB).
\end{equation*}
On the other hand, the Serre spectral sequence associated with the upper horizontal
arrow in \eqref{41} degenerates, as it has zero terms in odd degrees.
This gives the conclusion. 
\end{proof}

\subsection{Global cohomology is fully faithful on simples (statement)}
\label{sect_full}
Since global equivariant cohomology of a space acts on the cohomology (derived global
sections) of any equivariant complex, we get a functor $R\Gamma_\LK: \LD\to \Coh(\fB)$.
Let $\sC\subset \LD$ be the full subcategory of semisimple complexes, i.e.
sums of shifts of simple perverse sheaves. Let $\sO$ be the category whose objects are 
semisimple perverse sheaves in $\LD$ and 
  morphisms are given by $Hom_\sO(L_1,L_2) = Ext^\bu_{\LD}(L_1,L_2)$. 

One of the main results of this section is the following
\begin{Thm}\label{RGamma_full}
Assume that $\LK$ is connected. The functor $R\Gamma_\LK$ induces full embeddings
 $\sO \to \Coh(\fB)$, $\sC\to \Coh^{\Ce^\times}(\fB)$.
 Here the action of $\Ce^\times$ on $\fB$ comes from the action on $\check{\fc}$, $\check{\fh}$
 given by $t:x\mapsto t^2x$.
\end{Thm}

The proof occupies most of this section. Before presenting it
we  spell out some properties of the functor
$R\Gamma_\LK$. 

\begin{Rem}
In the case when $G_\RE$ is a complex group the Theorem reduces to a special
case of  \cite[Proposition 3.1.6]{BY} which is a generalization of \cite[Proposition 3.4.2]{BGS}.
A geometric approach to the latter also yielding a proof of the former has been 
developed in \cite{Gin}.
We did not see a way to prove Theorem \ref{RGamma_full} in a similar fashion and opted
for a different strategy.
\end{Rem}

\subsection{Components of $\fB$ and closed orbits}
\label{sect_comp}
The irreducible components of $\fB$ are indexed by 
$W/W(\LK)$ and can be described as follows. Let us recall that we have fixed a particular bijection $\phi:(\LK\bs \LBB)_{cl}\to \LW^\Lth/W(\LK)$ in~\ref{21}. In particular, we have fixed a  closed $\LK$-orbit $S_0$ corresponding to the identity coset. Observe that in this manner the bijection $\phi$ extends to a bijection between the $\LK$-orbits on $\LBB$ attached to the torus $\check T$ and $\LW/W(\LK)$. Choosing a Borel given by a fixed point of $\check T$ on the orbit $S_0$ we obtain an identification $\psi: \Lt\xrightarrow{\sim}\Lh$ which is well defined up to conjugacy by $W(\LK)$. 

From the identification $\psi: \Lt\xrightarrow{\sim}\Lh$ we obtain an embedding
 $\iota: \Lc\to \Lh$ which is well-defined up to conjugation by $W(\LK)$. For an element $w\in W$ 
 the map $\gamma_w:\Lc\to \fB$, $\gamma_w:x\mapsto (x,w\circ \iota)$
 is obviously a closed embedding and its image  $\fB_w$ is an irreducible component of $\fB$. 
  For $w'\in w \cdot W(\LK)$ we have 
  $\fB_w\cong \fB_{w'}$, thus we obtain a well-defined component $\fB_w\subset \fB$ for any
     $w\in \check W/W(\LK)$. Note that in this manner we have also obtained a particular bijection between the irreducible components of the  block variety $\fB$ and the set of $\LK$-orbits on $\LBB$ attached to the torus $\check T$.

\begin{Lem}\label{Ow}
For $S\in (\LK\bs \LBB)_{cl}$ we have 
$$R\Gamma_{\check K}(\Ce_S)\cong \O_{\fB_{\phi(S)}},$$
where $\phi$ is as in Claim \ref{21}.
\end{Lem}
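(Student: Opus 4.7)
The plan is to compute $R\Gamma_\LK(\Ce_S[\dim S])$ directly and then match it, as a module over $\Ce[\fB] = H^*_\LK(\LBB)$, with the structure sheaf of the component $\fB_{\phi(S)}$.

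\textbf{Step 1.} Since $S$ is a closed $\LK$-orbit, the inclusion $i_S\colon S\hookrightarrow \LBB$ is proper, so
\[
R\Gamma_\LK(\LBB,(i_S)_*\Ce_S[\dim S])\ \cong\ H^*_\LK(S)
\]
(up to the parity shift by $\dim S$). Pick a point $x\in S$ whose stabilizer is a $\Lth$-stable Borel $\LB'$ with $\Lth$-stable Cartan $\LT'\subset \LB'$ and let $\LC'=(\LT')^\Lth$. Then $S\cong \LK/(\LK\cap\LB')$, and because $\LK$ is connected $\LK\cap\LB'$ is a Borel of $\LK$ with reductive quotient $\LC'$. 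Hence
\[
H^*_\LK(S)\ \cong\ H^*_{\LK\cap\LB'}(\mathrm{pt})\ \cong\ H^*_{\LC'}(\mathrm{pt})\ \cong\ \Ce[\Lc'],
\]
concentrated in even degrees.

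\textbf{Step 2.} To make the $\Ce[\fB]$-module structure explicit, consider the diagram
\[
\begin{CD}
\mathrm{pt}/\LC' @>>> \mathrm{pt}/(\LK\cap\LB') @>>> \mathrm{pt}/\LK \\
@| @VVV @VVV \\
\mathrm{pt}/\LC' @>>> \mathrm{pt}/\LT' @>>> \mathrm{pt}/\LG,
\end{CD}
\]
where the second bottom map factors through $\mathrm{pt}/\LH$ via the identification $\Lt'\xrightarrow{\sim}\Lh$ determined by $\LB'$. Taking cohomology and applying Lemma~\ref{Global equivariant cohomology} identifies the map $\Ce[\fB]\to H^*_\LK(S)=\Ce[\Lc']$ with the map induced by a morphism of schemes
\[
f_S\colon \Lc'\longrightarrow \fB\ =\ \Lc/W(\LK)\times_{\Lh/W}\Lh,
\]
whose two components are (i) the map $\Lc'\to\Lc/W(\LK)$ coming from any $\LK$-conjugation of $\LC'$ onto the fixed $\LC$, and (ii) the composition $\Lc'\hookrightarrow\Lt'\xrightarrow{\sim}\Lh$ determined by $\LB'$.

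\textbf{Step 3.} Now match $f_S$ with $\gamma_{\phi(S)}$. By Claim~\ref{21} and the conventions preceding the lemma, the bijection $\phi$ is set up so that the coset $\phi(S)\in W/W(\LK)$ is exactly the class of the Weyl element comparing the identification $\psi\colon \Lt\xrightarrow{\sim}\Lh$ from the base orbit $S_0$ with the identification $\Lt'\xrightarrow{\sim}\Lh$ from $\LB'$, after conjugating $(\LT',\LC')$ to $(\LT,\LC)$. Thus after identifying $\Lc'$ with $\Lc$, component (ii) of $f_S$ becomes $\phi(S)\circ\iota$, while component (i) becomes the quotient map $\Lc\to\Lc/W(\LK)$. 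These are precisely the two components of $\gamma_{\phi(S)}$, so $f_S=\gamma_{\phi(S)}$ up to the choice of identification $\Lc'\cong\Lc$ (which is $W(\LK)$-ambiguous, matching the $W(\LK)$-ambiguity in the definition of $\fB_w$).

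\textbf{Step 4.} Since $\gamma_{\phi(S)}$ is a closed embedding with image $\fB_{\phi(S)}$ and induces an isomorphism $\O_{\fB_{\phi(S)}}\xrightarrow{\sim}\Ce[\Lc']$, the structure homomorphism $\Ce[\fB]\to H^*_\LK(S)$ factors through an isomorphism $\O_{\fB_{\phi(S)}}\xrightarrow{\sim} R\Gamma_\LK(\Ce_S[\dim S])$, as required.

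The main obstacle is the bookkeeping in Step~3: one has to verify that the normalization of $\phi$ fixed in Section~\ref{equivariant} (coming from the base orbit $S_0$ and the Borel at a chosen $\Lt$-fixed point on $S_0$) is precisely the one that makes the geometrically natural map $f_S$ coincide with $\gamma_{\phi(S)}$. Once this combinatorial matching is pinned down, the rest is a direct equivariant-cohomology computation.
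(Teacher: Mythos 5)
Your proof is correct and follows essentially the same route as the paper's: compute $H^*_\LK(S)\cong\Ce[\Lc]$ via a $\Lt$-fixed point, track the $\Ce[\fB]$-module structure through the equivariant cohomology maps, and identify the twist with $\phi(S)$. Your Steps 2 and 3 simply spell out in more detail the bookkeeping that the paper compresses into the single sentence "this identification is related to the identification $\psi$ by the element $\phi(S)$."
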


\proof 
 As $S$ is isomorphic to the flag variety for $\LK$, we see that $H^*_{\LK}(S)\cong
\bC[\Lc]$. By choosing a $\check T$-fixed point on $S$ we obtain a $\theta$-stable Borel and that Borel gives rise to 
an identification $\psi_S: \Lt\xrightarrow{\sim}\Lh$. This identification is related to the identification $\phi$ by the element $w=\phi(S)\in W^\Lth$. Thus, the map 
$$
\bC[\Lh]\xrightarrow{\psi_S^*}\bC[\Lt]=H^*_\LG(\LBB)\to H^*_\LK(S)=\bC[\Lc]
$$ 
gives rise to  the map $w\circ \iota:\Lc\to \Lh$. This gives us the conclusion. 
  \qed
 
 \subsection{Convolution with $\alpha$-lines}
 \label{ZFSF}

Recall that for every simple root $\alpha$ we have the corresponding minimal
parabolic $\check P_\alpha$, the partial flag variety $\check\PP_\alpha$ and a $\Pone$ fibration
$\pi_\al: \LBB \to \check\PP_\al$.  Set $C_\al=\pi_\al^*\pi_{\al*}:\LD\to \LD$.
By the decomposition theorem $C_\al$ acts  also on the category $\sC$.

Set $\fB_\al=\Lc/W(\LK)\times _{\Lh/W} \Lh/W_\al$ where $W_\al=\{1,s_\al\}\subset W$
with $s_\al$  the simple reflection of type $\al$.
We have a degree two map $pr_\al: \fB \to \fB_\al$. 

\begin{Lem}\label{C_al}
We have a canonical isomorphism of functors: $\LD \to \Coh^{\Ce^\times}(\fB)$:
$$R\Gamma_{\LK}\circ C_\al\cong pr_\al^*pr_{\al*} \circ R\Gamma_{\LK}.$$
\end{Lem}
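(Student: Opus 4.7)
The plan is to compute both sides of the claimed isomorphism as modules over $\bC[\fB]$, using Leray for the $\Pone$-fibration $\pi_\al$ together with an equivariant projection formula. As a preliminary step I would establish the analogue of Lemma~\ref{Global equivariant cohomology} for the partial flag variety: $H^*_\LK(\PP_\al)=\bC[\fB_\al]$. The argument is identical to that of Lemma~\ref{Global equivariant cohomology}, replacing $\text{pt}/\check B$ by $\text{pt}/P_\al$ in the diagram~\eqref{41} and using $H^*(\text{pt}/P_\al)=\bC[\Lh]^{W_\al}$, with Serre degeneration holding for the same parity reason. Under this identification the pullback $\pi_\al^*\colon H^*_\LK(\PP_\al)\to H^*_\LK(\LBB)$ coincides with the structure map $pr_\al^*\colon \bC[\fB_\al]\to \bC[\fB]$ induced by the inclusion $\bC[\Lh]^{W_\al}\hookrightarrow \bC[\Lh]$.

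The key input is the equivariant K\"unneth identity
$$R\Gamma_\LK(\LBB,\pi_\al^* G) \;\cong\; R\Gamma_\LK(\PP_\al,G)\Lotimes_{\bC[\fB_\al]}\bC[\fB],$$
natural in $G\in \od^b_\LK(\PP_\al)$, which I would deduce from the projection formula $R\pi_{\al *}\pi_\al^* G\cong G\otimes R\pi_{\al *}\bC_\LBB$ combined with the $\LK$-equivariant splitting $R\pi_{\al *}\bC_\LBB\cong \bC_{\PP_\al}\oplus \bC_{\PP_\al}[-2]$ on $\PP_\al$. The splitting holds because $\pi_\al$ is a Zariski-locally trivial $\Pone$-bundle and is witnessed by the first Chern class of a $\LK$-equivariant line bundle on $\LBB$ whose restriction to each fiber has degree one. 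Applying $R\Gamma_\LK(\PP_\al,-)$ and invoking Leray ($R\Gamma_\LK(\LBB,-)=R\Gamma_\LK(\PP_\al,R\pi_{\al *}(-))$) then yields the identity. I next observe that $\bC[\fB]=\bC[\fB_\al]\otimes_{\bC[\Lh]^{W_\al}}\bC[\Lh]$ is free of rank~$2$ over $\bC[\fB_\al]$ by Chevalley's theorem applied to the reflection $s_\al$, so the derived tensor is in fact underived. Specializing the K\"unneth identity to $G=\pi_{\al *}F$ and using $R\Gamma_\LK(\PP_\al,\pi_{\al *}F)=R\Gamma_\LK(\LBB,F)$ gives
$$R\Gamma_\LK(\LBB,C_\al F) \;=\; R\Gamma_\LK(\LBB,F)\otimes_{\bC[\fB_\al]}\bC[\fB],$$
which is $pr_\al^* pr_{\al *} R\Gamma_\LK(\LBB,F)$ by the definition of $pr_\al^* pr_{\al *}$ on coherent sheaves over the finite flat degree-$2$ morphism $pr_\al$.

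The main obstacle is ensuring that the K\"unneth identity is genuinely natural in $G$ — that is, that the splitting of $R\pi_{\al *}\bC_\LBB$ is canonical in $\od^b_\LK(\PP_\al)$, not merely available locally. This holds because the relevant Chern class is $\LK$-equivariant, being pulled back from a character of $\LB$; together with the projection formula it upgrades to the required natural isomorphism of functors $\Dd \to \Coh(\fB)$.
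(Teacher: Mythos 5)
Your proposal is correct and follows essentially the same route as the paper: both rest on the canonical splitting $R\pi_{\al*}\Ce_\LBB\cong \Ce_{\PP_\al}\oplus \Ce_{\PP_\al}[-2]$ (given by a $\LK$-equivariant Chern class) together with the freeness of $\O(\fB)$ of rank two over $\O(\fB_\al)$, and both reduce the claim to checking that the canonical $\O(\fB)$-linear comparison map induced by the adjunction unit is an isomorphism. The only cosmetic difference is that you package the argument as a natural K\"unneth identity $R\Gamma_\LK(\LBB,\pi_\al^*G)\cong R\Gamma_\LK(\PP_\al,G)\otimes_{\O(\fB_\al)}\O(\fB)$ and then specialize to $G=\pi_{\al*}\F$, whereas the paper constructs the comparison map directly and verifies it against the two-term decomposition; the content is the same.
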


\begin{proof} We have a map 
$$
R\Gamma_{\LK}(\F)=R\Gamma_{\LK}(\pi_{\al*}(\F))
\to R\Gamma_{\LK}(\pi_{\al*}\pi_\al^*\pi_{\al*}(\F))=R\Gamma_{\LK}(\pi_\al^*\pi_{\al*}(\F))\,.
$$
 It is clear
that this map is compatible with the action 
of $H^*_{\LK}(\check\PP_\al)=\O(\fB_\al)$. Thus we get a map from the right hand side to the left hand 
side of the required isomorphism. 
The direct image of the constant sheaf $\pi_{\al *}(\Ce_{\LBB})$ is isomorphic to 
$\Ce_{\LBB_\al}\oplus \Ce_{\LBB_\al}[-2]$. It follows that 
 $$R\Gamma_{\LK}(\pi_\al^*\pi_{\al*}(\F))\cong R\Gamma_{\LK}(\pi_{\al*}(\F))
 \oplus R\Gamma_{\LK}(\pi_{\al*}(\F)) [-2]\cong R\Gamma_{\LK}(\F)\oplus
 R\Gamma_{\LK}(\F)[-2],$$
which implies that map in the lemma is an isomorphism. 
\end{proof}

\subsection{Closed orbits generate the block}
The following statement follows from the structure of the block as described
in \cite{ic4}, see Remark \ref{prop_from_Vogan}. We present a geometric argument for completeness.

\begin{Prop}\label{spec_gen} The objects $\underline{\Ce}_S[\dim S]$, $S\in (\LK\bs \LBB)_{cl}$  generate $\sC$
under the action of the functors $C_\al$, direct sums, summands and homological shifts; here  $\underline{\Ce}_S$
is the constant sheaf on a closed orbit $S$
\end{Prop}
\begin{proof}
Introduce a partial order on the set of irreducible $\LK$-equivariant perverse sheaves on $\LBB$
by saying that $L<L'$ if $\dim(\supp(L))< \dim (\supp(L'))$ and $L'$ is a direct summand
in the convolution $L*C$ for some semisimple complex $C\in D_\LB(\LBB)$. 
We will say that an irreducible perverse sheaf is {\em minimal} if it is a minimal element
for that partial order. 
The constant sheaf on a closed orbit is clearly minimal.  
It suffices to show that if $L$ is minimal but is not of this form then we have: 
$$
Ext^\bu(\underline{\Ce}_S*A_1, L*A_2)=0
$$
for $A_1,A_2\in D_\LB(\LBB)$. Using adjunction we reduce
to $A_2$ being the unit in the monoidal category $D_\LB(\BB)$. Then without 
loss of generality we can take $A_1$ to be an element in a generating set of the
triangulated category $D_\LB(\LBB)$.  Thus we are reduced to showing that

\begin{equation}\label{Ext_van}
Ext^\bu(\underline{\Ce}_S*J_{w!}, L)=0
\end{equation}
where $J_{w!}$ is the constant sheaf on the Schubert cell associated to $w$ extended by zero. 
Let $O$ be the open orbit in the support of $L$ and $\cL$ the corresponding 
local system on $O$. Let $B_x$ be the stabilizer of a point $x\in O$,  and let $T_x\subset B_x$
be a $\theta$-stable Cartan. 

Since $O$ is not closed, $B_x$ is not $\theta$-stable, so there exists a simple negative
root $\al$ for $T_x$ such that $\theta(\al)$ is positive. We claim that in this case
$\theta(\al)=-\al$ (i.e. $\al$ is a real root): otherwise the corresponding $\Pone$ fibration
$\pi_\al:\LBB\to \LPP_\al$ restricted to $O$ is an $\Aone$ fibration, so 
$L$ is a direct summand of $\pi_\al^*\pi_{\al*}(L')[1]$ for a simple perverse sheaf $L'$
whose support is the closure of $\pi_\al^{-1}\pi_{\al}(O)\setminus O$. This
contradicts the minimality of $L$. 

For each $\al$ with $\theta(\al)=-\al$ the fibers of $\pi_\al|_{O}$ are isomorphic
to $\Ce^*$; moreover, the local system $\cL$ is nontrivial on such a fiber, for otherwise
one can find a simple perverse sheaf $L'$ whose support is the closure of either  $\pi_\al^{-1}\pi_\al(O)\setminus O$
or one of its two components,  such that $L$ is a direct summand in $\pi_\al^*\pi_{\al*}(L')[1]$. 
 
 By a result of Speh and Vogan \cite{SpV}, see also \cite[Theorem 8.6.6]{green} and \cite[Corollary 8.8.]{HMSW},
 $L$ is a clean extension of $\cL[\dim (O)]$ from $O$, thus the left hand side
 of \eqref{Ext_van} can be rewritten as 
 $H^\bu_{\LK}(Z(w),pr_2^*(\cL))$ where $Z(w)=(S\times O)\cap \LBB^2_w$ and $pr_2$ stands
 for the second projection (here $\LBB^2_w$ is the $\LG$-orbit in $\LBB^2$ corresponding to $w$). 
 Fix $y\in S$ and let $Z_y(w)=Z(w)\cap pr_1^{-1}(y)$, then  \eqref{Ext_van} would follow if we show  
 that  
 \begin{equation}\label{LZy}
 H^\bu(Z_y(w),\cL|_{Z_y(w)})=0.
 \end{equation} 
 
We will prove \eqref{LZy} by considering intersections of $Z_y(w)$ with $\alpha$-lines,
i.e. fibers of $\pi_\al$ for a simple root $\al$. To this end  we first check that for every $w$ such that
 $Z(w)$ is nonempty there exists  an $\al$ with  $\theta(\al)=-\al$ and such that $ws_\al<w$.
 
 Assume that $x\in Z_y(w)$. Let $P$ be the parabolic generated by $B_x$ and 
 the root subgroups corresponding to negative simple roots $\al$ with $\theta(\al)=-\al$,
 thus $P$ is $\theta$-stable parabolic. Therefore the image of $B_y\cap P$ in the Levi quotient
 $L_P$ is a $\theta$-stable Borel in $L_P$. Hence, it cannot coincide with the image of $B_x$
 in $L_P$ since the image of $B_x$ is in a general position with its image under $\theta$. Thus the image
 of $Lie(B_y\cap P)$ in $Lie(P)/Lie(B_x)$ contains the image of a simple
 negative root space, which by the definition of $P$ satisfies $\theta(\al)=-\al$.
 It follows that $ws_\al<w$. 
 
 For $\al$ as above consider $\tilde{Z}_y(w)=\pi_\al^{-1}\pi_\al(Z_y(w))\cap O$. 
 We have $\tilde{Z}_y(w)=Z_y(w)\cup Z_y(ws_\al)$ (note that $Z_y(ws_\al)$ may
 be empty). Using induction on the length of $w$ we can assume that \eqref{LZy} holds
 for $ws_\al$ (it obviously holds if $Z_y(w)$ is empty). Then \eqref{LZy} follows from:
  \begin{equation}\label{LZytil}
 H^\bu(\tilde Z_y(w),\cL|_{\tilde Z_y(w)})=0.
  \end{equation}
  It remains to observe that $\pi_\al |_{\tilde{Z}_y(w)}$ is a $\Ce^*$ fibration,
while the restriction of $\cL$ to a fiber of that fibration is nontrivial. It follows
that $R (\pi_\al |_{\tilde{Z}_y(w)})(\cL)=0$ which implies \eqref{LZytil} and hence
\eqref{LZy}.
 \qed
 
 \begin{Rem}\label{prop_from_Vogan}
 It is easy to see that the bijection induced by Vogan's character duality \cite{ic4}
   between irreducible objects in the two dual sides  interchanges irreducibles minimal
  in the 
 above sense with IC extensions from the open orbit. As observed above,
 IC extensions from the open orbit in $\MM$ are in bijection with 
 closed orbits in $\LBB$, this yields another proof of Proposition \ref{spec_gen}.
 \end{Rem}
 
  \begin{Rem} Similar considerations (see also
  also ~\cite[Theorem 8.5]{ic4})  show that
  given any block there is a $\theta$-stable Cartan $T$ well defined up to conjugacy
  with the following property. The minimal elements in the block are local systems on 
  orbits attached to $T$.  
  Moreover, if an object in this block restricted
  to an orbit $O\subset \LBB$ is nonzero then the corresponding $\theta$-stable 
  Cartan is less compact than $T$, i.e.\footnote{There is also a stronger
  sense in which one Cartan subgroup is more compact than the other,
  see {\em loc. cit.}.} 
   dimension of the space of $\theta$-invariants is 
  smaller. 
  \end{Rem}
  
\end{proof}

We now embark on the proof of 
Theorem \ref{RGamma_full}.

\subsection{Beginning of the proof of Theorem \ref{RGamma_full}: reduction
to closed orbits}
 We need to show that 
\begin{equation}
\label{isom}
Hom_\sO(\F,\G) \iso Hom_{\Coh(\fB)} (R\Gamma_\LK(\F), R\Gamma_\LK(\G))
\end{equation}
for $\F,\G \in \sO$, where the isomorphism is compatible with the grading. This implies both the graded and non-graded statements
of Theorem~ \ref{RGamma_full}. 
Notice that the functor $C_\al$ is self-adjoint (up to a shift) and $pr_\al^*pr_{\al*}$ is self-adjoint
(up to a degree shift if one considers graded modules).
Thus, the validity of isomorphism \eqref{isom} for a given $\G=\G_0$ and all $\F$ implies
its validity for $\G=C_\al(\G_0)$ and all $\F$. In view of Proposition \ref{spec_gen}
we conclude that it suffices to check \eqref{isom} for $\G={\Ce}_S[\dim S]$, $S\in (\LK\bs \LBB)_{cl}$.

\subsection{Pointwise purity and reduction to a property of the map on cohomology}
 Fix $\F\in \sO$, $S\in (\LK\bs \LBB)_{cl}$ and let $i_S:S\imbed \LBB$ be the embedding.
Then, by adjunction, 
\begin{equation*}
Hom_\sO(\F,\Ce_S)= Ext_{D_{\LK}(S)}(i_S^*(\F), \Ce_S).
\end{equation*}

We have
\begin{equation}
\label{semi-simple}
\text{the complex $i_S^*(\F)$ is semi-simple}\,.
\end{equation}
To see this, one has to work in the category of mixed sheaves. This can be done directly in this setting by utilizing Saito's mixed Hodge modules \cite{S1,S2} or, by passing to the case of a finite base field and using $\ell$-adic sheaves. In either case, the sheaf $\F$ is pure. It follows from the arguments in \cite{LV} that $\F$ is actually {\em pointwise pure}, i.e., for all $x\in X$ the sheaves $i_x^*\cF$ and  $i_x^!\cF$ are pure; here $i_x: \{x\} \to X$ denotes the inclusion. As $S$ is a closed orbit of $\LK$ on $\LBB$ it is isomorphic to the flag manifold of $\LK$ and hence is simply connected. Therefore, 
the complex 
 $i_S^*(\F)$ has constant cohomology sheaves, and its stalks are pure because of 
 pointwise purity of $\F$. Thus $i_S^*(\F)$  
 is a pure complex and hence it is semi-simple.
 As $i_S^*(\F)$ is semi-simple, it is a direct sum of sheaves $\Ce_S$ that have been shifted to various degrees. From this we conclude immediately that 
 
 \begin{equation*}
Ext_{D_{\LK}(S)}(i_S^*(\F), \Ce_S) = Hom_{H^*_{\LK}(S) } (R\Gamma_{\LK}(i_S^*(\F)) , H^*_{\LK}(S))\,,
\end{equation*}
 and hence that 
\begin{equation}
\label{reduction}
 Hom_\sO(\F,\Ce_S)= Hom_{H^*_{\LK}(S) } (R\Gamma_{\LK}(i_S^*(\F)) , H^*_{\LK}(S)).
 \end{equation}
 
Recall now that we are attempting to show that the canonical map
\begin{equation*}
Hom_\sO(\F,\Ce_S) \to Hom_{\Coh(\fB)} (R\Gamma_\LK(\F), H^*_{\LK}(S))
\end{equation*}
is an isomorphism. Hence, by \eqref{reduction}, it suffices to show that the canonical map 
\begin{equation}
\label{reduction1}
 Hom_{H^*_{\LK}(S) } (R\Gamma_{\LK}(i_S^*(\F)) , H^*_{\LK}(S))\to Hom_{\Coh(\fB)} (R\Gamma_\LK(\F), H^*_{\LK}(S))\,,
 \end{equation}
 which is induced by the map $R\Gamma_{\LK}(\F) \to R\Gamma_\LK(i_S^*(\F))$ on global cohomology, 
 is an isomorphism. Let us now write down this map in more concrete terms. 
 
 It is clear from Lemma \ref{Ow} that
  the map in \eqref{reduction1} can be written as
 \begin{equation*}
Hom_{\Coh(\fB_w)}  (R\Gamma_{\LK}(i_S^*(\F)) , \cO_{\fB_w}))\to Hom_{\Coh(\fB)} (R\Gamma_\LK(\F), \cO_{\fB_w})\,,
\end{equation*}
where $w=\phi(S)$ using the notations of Claim \ref{21} and Lemma \ref{Ow}. 

Using adjunction on the right hand side, we are reduced to showing that 
\begin{equation}
\label{last step}
Hom_{\Coh(\Lc)}  (R\Gamma_{\LK}(i_S^*(\F)) , \cO_\Lc))\to Hom_{\Coh(\Lc)} (\cO_\Lc \otimes_{\cO_\fB}R\Gamma_\LK(\F), \cO_\Lc)
\end{equation}
is an isomorphism. 

Finally, let us recall that the map in the above formula is induced by the canonical map of coherent $\cO_\Lc$-sheaves
 \begin{equation}\label{HtoH}
\cO_\Lc \otimes_{\cO_\fB}R\Gamma_\LK(\F) \xrightarrow{\ r\ } R\Gamma_\LK(i_S^*(\F)).
\end{equation}
We  observe that the map  \eqref{last step} is obtained from  \eqref{HtoH} by duality, i.e., by applying the functor $Hom(\ ,  \cO_\Lc)$. Thus, to prove that~\eqref{last step} is an isomorphism it suffices to show that
\begin{align*}
& Hom_{\Coh(\Lc)} (Ker(r), \cO_\Lc))\ = \ 0
\\
&  Hom_{\Coh(\Lc)} (Coker(r), \cO_\Lc))\ =\  Ext^1_{\Coh(\Lc)} (Coker(r), \cO_\Lc))\ = \ 0 
 \end{align*}

Thus, we conclude that the map in \eqref{last step} is an isomorphism if
\begin{equation}
\label{isomorphism reduction}
\begin{gathered}
\text{a. the support of the kernel of \eqref{HtoH}  has positive codimension and}
\\
\text{b. the support of the cokernel of \eqref{HtoH} has codimension two 
or higher.}
\end{gathered}
\end{equation}
As coherent $\cO_\Lc$-sheaves are generically locally free, it suffices to check the statement above on the level of fibers. 
\subsection{Small codimension  argument and localization to fixed points}

We will now argue~\eqref{isomorphism reduction}. To do so we will make use of localization of equivariant cohomology for torus actions. Thus, we will shift to equivariant cohomology with respect to the torus $\check C$. We recall that, in general, 
\begin{equation*}
H^*_\LK(X,\F) = H^*_{\check C}(X,\F)^{W(\LK)}\,;
\end{equation*}
here $X$ is a variety and $\F$ is a $\LK$-equivariant sheaf in $D_\LK(X)$.

Thus, to prove \eqref{isomorphism reduction} for the map \eqref{HtoH} it first of all suffices to prove it for the map
 \begin{equation}
 \label{final reduction}
 H^*_{\check C}(S)\otimes_ {H^*_{\check C}(\LBB)}H^*_{\check C}(\LBB,\F) \to  H^*_{\check C}(S,i_S^*(\F))
\end{equation}
and it suffices to check the statement on the level of fibers. 

To check the statement on the level of fibers, let us also recall the localization theorem for equivariant cohomology, see, for example, \cite[Theorem 6.2]{GKM}. We view $H^*_{\check C}(X,\F)$ as an  $H^*_{\check C}(\on{pt},\Ce)= \Ce[\check c]$ module. Let $c\in \check \fc$ be generic. In the case at hand this amounts to $c$ not lying on any root hyperplanes. Then we have:
\begin{equation*}
\cO_{\check\fc,c}\otimes_{\cO_{\check\fc}}H^*_{\check C}(X,\F) \cong \cO_{\check\fc,c}\otimes_{\cO_{\check\fc}}H^*_{\check C}(X^{c},\F)\,,
\end{equation*}
where $\cO_{\check\fc,c}$ stands for the local ring at $c$. Furthermore, taking the quotient by the maximal ideal we get an isomorphism on the fibers:
\begin{equation*}
H^*_{\check C}(X,\F)_c \cong H^*_{\check C}(X^{c},\F)_c\,,
\end{equation*}
We will apply the localization theorem in this form. 

Let us recall that $Z_{\check \fg}(\check \fc)=\check \ft$ is a maximal torus in $\check \fg$. 
Recall also that the torus $\check \fc$ has elements which are regular in $\check \fg$, i.e., elements that  do not lie in any root hyperplane for $\fg$. Let us first consider such a $c\in \check \fc$. 

Then by the above localization theorem for equivariant cohomology we get isomorphisms of algebras and restrictions as follows:
\begin{equation}
\label{localization of algebras}
\begin{CD}
H^*_{\LC}(\LBB)_c @>{\cong}>>  \oplusl_{x\in \LBB^c}
 \Ce_x
 \\
 @VVV   @VVV 
 \\
 H^*_{\LC}(S)_c@>{\cong}>>  \oplusl_{x\in S^c}  \Ce_x\,.
\end{CD}
\end{equation}
Furthermore, we get:
\begin{equation}
\label{localization of F}
\begin{CD}
R\Gamma_{\LC}(\F)_c @>{\cong}>>  \oplusl_{x\in \LBB^c}
 \F_x
 \\
 @VVV   @VVV 
 \\
R\Gamma_{\LC}(i_S^*\F)_c@>{\cong}>>  \oplusl_{x\in S^c}  \F_x\,.
\end{CD}
\end{equation}
In the above formulas $\F_x$ stands for the stalk of the complex $\F$ as the point $x$. The isomorphisms and restrictions in \eqref{localization of F} are compatible with the actions of the algebras in \eqref{localization of algebras}.  Thus, we conclude that for $c$ regular in $\g$ we have
\begin{equation*}
( H^*_{\check C}(S)\otimes_ {H^*_{\check C}(\LBB)}H^*_{\check C}(\LBB,\F))_c \cong  H^*_{\check C}(S,i_S^*(\F))_c.
\end{equation*}
Thus we established part a) of~\eqref{isomorphism reduction} and that the cokernel in part b) of~\eqref{isomorphism reduction} is concentrated on root hyperplanes. It remains to prove that the cokernel vanishes generically on root hyperplanes. 
 
 \subsection{The small rank case}
Let us then assume that $c\in \Lc\cap H_\al$ is a general element; here $H_\al\subset \Lh$ is the root hyperplane corresponding to the root $\alpha$  of $\Lh$ in $\Lg$. We write $\check C_\alpha$ for the codimension one sub torus of $\check C$ corresponding to the root $\alpha$. In other words, $\check C_\alpha$ is the torus generated by $c$.

Let us write  $Z_c'$  for the  derived group of the centralizer of $c$ in $\LG$. Then,  by construction, the involution ${\check\theta}$ restricts to $Z_c'$  and the group
$(Z_c')^{\check\theta}$ of fixed points of ${\check\theta}$ has rank 1. This imposes a very severe restriction on $Z_c'$. It can only happen
  if $Z_c'$ has rank one or if it has rank two and the Cartan involution is not inner.
  Thus $Z_c'$ corresponds (up to isogeny) to one of the following real groups:
  $SL(2,\RE)$, $SU(2)$, $SL(2,\Ce)$, $SL(3,\RE)$. It is also easy to see this from the geometric point of view as follows. Because $(Z_c')^{\check\theta}$  has  rank one and it has to have an open orbit on the flag manifold of $Z_c'$ it follows that the flag manifold can have at most dimension three. Thus, we recover the list above. Furthermore, as we have assumed that $\LK$ is connected, it follows that $(Z_c')^{\check\theta}$ is connected.
  
  The fixed point set $\LBB^c$ is a union of connected components
  each one of which is isomorphic to the flag variety of $Z_c'$. It follows from the discussion above that the components of $\LBB^c$ are either $\Pone$ or $\Pone \times \Pone$ or the quadric $Q\subset {\mathbb P}^2\times {\mathbb P}^2$. 
  
  Let us recall that we have to argue that the map in~\eqref{final reduction} is surjective in codimension one, i.e., that for the $c\in \check C_\alpha$  generic we have surjection:
 \begin{equation*}
(H^*_{\check C}(S)\otimes_ {H^*_{\check C}(\LBB)}H^*_{\check C}(\LBB,\F))_c \to H^*_{\check C}(S,i_S^*(\F))_c\,.
\end{equation*}
It suffices to prove that $H^*_{\check C}(\LBB,\F)_c \to H^*_{\check C}(S,i_S^*(\F))_c$ is a surjection.

By Proposition \ref{spec_gen} 
we can realize $\F$ as a direct summand in $C_{\alpha_1}  \cdots C_{\alpha_n}(\underline{\Ce}_O[d_O])$,
 for a collection of simple roots
$\alpha_1,\dots, \alpha_n$ and a closed $\LK$-orbit $O$ of dimension $d_O$. Let ${\bf C}$ denote the corresponding convolution diagram 
and $\pi:{\bf C}\to \LBB$ the convolution morphism. 

Then $\F$ is a direct summand in $\pi_*(\underline{\Ce}_{\bf C}[\dim ({\bf C})])$, applying base change to the proper 
morphism $\pi$ we are reduced to showing that the restriction 
map $H^*_{\check C}({\bf C})_c \to  H^*_{\check C}(\pi^{-1}(S))_c$ is onto. 

Applying again the localization theorem for equivariant cohomology~\cite[Theorem 6.2]{GKM}, we are reduced to 
showing surjectivity of the restriction map for the fixed point sets: $H^*({\bf C}^c)\to  H^*(\pi^{-1}(S)^c)$. Finally, setting
 $\F^c:=\pi_*(\underline{\Ce}_{{\bf C}^c})$ we get that this is equivalent to surjectivity 
 of the map   $H^*(\LBB^c,\F^c)\to H^*(S^c,i_S^*(\F^c))$.
 
\begin{lem}
\label{purity of restriction}
The complex $\F^c$ is semi-simple. 
\end{lem}

\begin{proof}
The space ${\bf C}^c$ is the fixed point set of an algebraic torus acting on the smooth space $\bf C$, thus
it is smooth. So the claim follows from the Decomposition Theorem \cite{BBD}. 
\end{proof}

We conclude that it suffices to prove  surjectivity of restriction for an irreducible $\check K\cap  Z_c'$-equivariant sheaf $\G$ on  $Y$, i.e., that 
 \begin{equation}
 \label{surj}
H^*(Y,\G) \to H^*(S\cap Y,i_S^*(\G))
\end{equation}
is surjective, where $Y$ is a component of $\LBB^c$. Notice that $Y$ is identified with the flag manifold of $Z_c'$. As $S$ is a closed orbit it consists of ${\check\theta}$-stable Borels in $\LBB$,therefore $S\cap Y$ consists of ${\check\theta}$-stable Borels in $Y$.

Recall that we have the following possibilities. When $Y=\bP^1$ then $(Z_c')^{\check\theta}$ is, up to isogeny, $SL(2)$ or $\bC^*$. When $Y=\bP^1\times \bP^1$  then $(Z_c')^{\check\theta}$ is, up to isogeny, $SL(2)$ and when $Y=Q$ then $Z_c'$ is, up to isogeny, $SO(3)$. 

In all these cases $Y^{\check\theta}$ is a single $(Z_c')^{\check\theta}$ orbit with the exception of $Y=\bP^1$, 
$(Z_c')^{\check\theta}=\bC^*$ when it consists of two points. We claim that in the latter case $|S\cap Y|$=1. To see this, let
 $Y^{\check\theta}=\{x,y\}$ and $B_x=T_xU_x$ be the stabilizer of $x$,
then the line $Y$ corresponds to a non-compact imaginary root $\alpha$ of $T_x$.
It follows that $\alpha$ is not a sum of two $\theta$-conjugate complex roots (for if $\alpha=\alpha_1+\alpha_2$ with $\theta(\alpha_1)=\alpha_2$ then $\langle c,\alpha_i\rangle =0$
for $i=1,2$ contradicting $Z_c'\cong SL(2)$), which implies that $s_\alpha\not \in W_{\LK}$, so $y\not \in \LK(x)$.

 If $\G$ is an intersection cohomology complex associated to a non-constant local system then $Z_c'$ corresponds to  $SL(2,\RE)$ or $SL(3,\RE)$ and the support of $\G$ is all of $Y$. As we will argue below, in this case $i_S^*(\G)=0$ and  the surjectivity in~\eqref{surj} obvious. When  $Z_c'$ corresponds to  $SL(2,\RE)$ one sees immediately that $i_S^*(\G)=0$ for the non-trivial  rank one local system. When $Z_c'$ corresponds to  $SL(3,\RE)$ there are three non-trivial local systems. When the rank one local system is non-trivial only around one of the codimension one orbits the smoothness of the closures of the codimension one orbits immediately implies that $i_S^*(\G)=0$. When the local system is non-trivial around both of the codimension one orbits a short calculation shows that $i_S^*(\G)=0$, or, alternatively, it also follows from the fact that $\G$ corresponds to the irreducible principal series and in this case $\G$ is clean. 

We are now reduced to consider the case when $\G$ is an intersection homology complex associated to a trivial local system. The smoothness of closures of the orbits then imply that $\G$  is a constant sheaf on a closure of an orbit. Let us write $R$ for an orbit closure. Considering the composition
 \begin{equation*}
H^*(Y,\bC) \to H^*(R,\bC) \to H^*(S\cap Y, \bC)
\end{equation*}
 we are reduced to showing that
 \begin{equation*}
H^*(Y,\bC) \to H^*(S\cap Y, \bC)
\end{equation*}
 is surjective. This follows from:
 \begin{prop}
 Let $G$ be a semisimple algebraic group with an involution $\theta$ such that the fixed point set $K$ of $\theta$ is connected. Let us write $Y$ for the flag manifold of $G$, $S$ for a closed $K$-orbit on $Y$ and $C$ for a Cartan of $K$. Then we have a surjection
$$
H^*(Y,\bC) \to H^*(S, \bC)\,.
$$
 \end{prop}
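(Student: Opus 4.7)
The plan is to reduce the equivariant statement to one about ordinary cohomology via equivariant formality, and then to reduce further, using Borel's theorem, to a character-restriction map.

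For the first reduction, I will use the fact that, $K$ being connected, the closed $K$-orbit $S \subset Z = G/B$ takes the form $S \cong K/B_K$, with $B_K = K \cap B^x$ a Borel of $K$ arising from a $\theta$-stable Borel $B^x \subset G$ representing the orbit. Both $Z$ and $S$ are then flag varieties of connected reductive groups, hence smooth projective with cohomology concentrated in even degrees, and both are therefore $C$-equivariantly formal. Thus $H^*_C(Z) \cong H^*(Z) \otimes_\bC \bC[\fc]$ and $H^*_C(S) \cong H^*(S) \otimes_\bC \bC[\fc]$ as graded free $\bC[\fc]$-modules; by graded Nakayama, surjectivity of the $\bC[\fc]$-module restriction $H^*_C(Z) \to H^*_C(S)$ is equivalent to surjectivity modulo the augmentation ideal, i.e., of the ordinary restriction $H^*(Z) \to H^*(S)$.

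For the second step, Borel's theorem tells us that $H^*(K/B_K)$ is generated as a ring in cohomological degree two, so (pullback being a ring homomorphism) it suffices to verify surjectivity of $H^2(Z) \to H^2(S)$. I will pick a $\theta$-stable maximal torus $T \subset B^x$ with Lie algebra $\ft$, and conjugate within $K$ so that $C = (T^\theta)^\circ$ has Lie algebra $\fc = \ft^\theta$. Under the identifications $H^2(Z) = \ft^*$ and $H^2(S) = \fc^*$ (dual to the Cartans), the restriction $H^2(Z) \to H^2(S)$ is the dual of the inclusion $\fc \hookrightarrow \ft$, and is therefore surjective.

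The main thing requiring justification is equivariant formality for flag varieties, which is standard (e.g.\ via a Bialynicki-Birula decomposition into affine cells yielding vanishing of odd cohomology); the remaining ingredients---the identification $S \cong K/B_K$, Borel's generation-in-degree-two, and the character-theoretic description of the restriction on $H^2$---are classical for closed orbits under a connected reductive group.
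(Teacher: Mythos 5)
Your proof is correct, and it takes a genuinely different route from the paper's. The paper computes both equivariant cohomology rings explicitly via the Eilenberg--Moore-type argument from Lemma \ref{Global equivariant cohomology}, obtaining $H^*_{C}(Z)\cong\Ce[\ft]\otimes_{\Ce[\ft]^W}\Ce[\fc]$ with $\ft=Z_\fg(\fc)$ and $H^*_{C}(S)\cong\Ce[\fc]\otimes_{\Ce[\fc]^{W(K)}}\Ce[\fc]$, and then reads off surjectivity from the fact that the restriction $\Ce[\ft]\to\Ce[\fc]$ on the first tensor factor is surjective (after first disposing of the $SU(2)$ case by noting $S=Z$). You instead avoid any explicit ring computation: equivariant formality of $Z$ and $S$ (both have even cohomology) plus graded Nakayama reduce the statement to surjectivity of the ordinary restriction $H^*(Z)\to H^*(S)$, and then Borel's theorem lets you reduce further to degree two, where the map is visibly the surjection $\ft^*\twoheadrightarrow\fc^*$. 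Both arguments rest on the same two pillars --- even cohomology (hence degeneration/formality) and surjectivity in degree two --- but your version packages the degeneration as a black-box reduction rather than as an explicit presentation of the equivariant rings, which makes the logic a bit more transparent. One point worth noting explicitly, which both proofs share implicitly: neither uses the real-rank-one hypothesis anywhere; the surjection $H^*_C(Z)\twoheadrightarrow H^*_C(S)$ holds for any connected $K=G^\theta$ and closed orbit $S$. The rank-one assumption is only there because it is what the preceding reduction in the paper supplies, and because it is what guarantees (as stated just before the proposition) that $S$ is the full $K$-orbit closure and the sheaf $\cG$ is constant.
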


\begin{proof}
The closed orbit $S$ is a flag manifold of $K$ and we can think of the map $S\subset Y$ as an inclusion $K/K\cap B \subset G/B$ for a particular Borel $B$ in $G$.
This yields an inclusion of the abstract Cartan $\fc$ of $K$ into $\ft$, the abstract Cartan of $G$.
We have onto maps $\Ce[\ft]\to H^*(Y,\Ce)$, $\Ce[\fc]\to H^*(S,\Ce)$ compatible with the inclusion $\fc \imbed \ft$, which implies the statement.
\end{proof}

This completes the proof of Theorem \ref{RGamma_full}.
   
  \subsection{The case of disconnected $\LK$}
 Let us now consider the general case, i.e., the case when $\LK$ is possibly not connected. We define the block variety $\fB$ using the group $\LK^\circ$, i.e., 
 \begin{equation*}
 \fB=\Lc/W(\LK^\circ,\Lc)\times _{\Lh/W} \Lh
 \end{equation*}
We note that we have the following exact sequence
 \begin{equation*}
 1 \to W(\LK^\circ, \Lc) \to W(\LK,\Lc) \to \fS \to 1\,,
 \end{equation*}
 where  $\fS = \LK/\LK^\circ$.
 Furthermore,  recall again that  $Z_{\check \fg}(\check \fc)=\check \ft$ is a maximal torus in $\check \fg$ and that the torus $\check \fc$ contains elements which are regular in $\check \fg$, i.e., elements that  do not lie in any root hyperplane for $\fg$. Thus, $N_\LG(\Lt) = N_\LG(\Lc)$. This implies that $W(\LK,\Lc)\subset W(\LG,\Lc) \subset W(\LG,\Lt)= W$. Thus: 
 \begin{equation*}
 \begin{gathered}
 \text{We have an action of $\fS$ on $\Lc/W(\LK^\circ,\Lc)$ such that}
\\
 \text{the induced
 action on the quotient $\Lc/W(\LG,\Lc)$ is trivial}\,.
 \end{gathered}
 \end{equation*}
 Therefore the action of $\fS$ on $\fB$ takes place on the factor $\Lc/W(\LK^\circ,\Lc)$. 
  
 We define a functor from $\LD$ to $\Coh^\fS(\fB)$ by sending $\cF\in \LD$ to  $R\Gamma_{\LK^\circ}(\cF)$. The case of connected $\LK$ immediately implies the general case:
 
 \begin{Thm}\label{RGamma_full general}
The functor $R\Gamma_{\LK^\circ}$ induces full embeddings $\sO \to \Coh^\fS (\fB)$, $\sC\to \Coh^{\fS\times \C^\times}(\fB)$.
\end{Thm}

\subsection{The description of the principal block}
The following Theorem summarizes the results of this section. 
To state it recall that the components of $\fB$ are in canonical
 bijection with $W/W_\MM'$. Let us call components corresponding to elements in the 
subset $W^\theta/W_\MM'$ {\em preferred} components. These are in bijection
with the set of closed $\LK^0$ orbits on $\LB$ and also with the set of $\LL_\MM^{ad}(X_0)$.

\begin{Thm}\label{pribl_thm}
Let $\cA\subset \Coh(\fB)$, $\cA_{gr}\subset \Coh^{\Ce^\times}(\fB)$
 be the full subcategories generated by the structure
sheaves of preferred components of $\fB$ under the action of the functors $pr_\al^*pr_{\al*}$,
direct sums and direct summands (respectively, direct sums, summands and shifts).
 Let $\cA^\fS$ be the full subcategory
in $\Coh^\fS(\fB)$ defined as the preimage of $\cA$ under the forgetful functor
$\Coh^\fS(\fB)\to \Coh(\fB)$ and similarly for $\cA^\fS_{gr}$.

Then the functor $R\Gamma_{\LK^0}$ induces  equivalences 
 $$\sO \iso \cA^\fS,$$
 $$\sC\iso \cA^\fS_{gr},$$
where $\sO$, $\sO^{gr}$ were defined in  section \ref{sect_full}.
\end{Thm}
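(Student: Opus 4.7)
The plan is to combine Theorem~\ref{RGamma_full general} with the generation statement of Proposition~\ref{spec_gen} and the intertwining relation of Lemma~\ref{C_al}. Full faithfulness of $R\Gamma_{\LK^\circ}\colon \cL \to \Coh^\fS(\fB)$ is already given by Theorem~\ref{RGamma_full general}, so the remaining task is to identify the essential image with $\cA^\fS$.

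First I would check that the essential image is contained in $\cA^\fS$. By Lemma~\ref{Ow}, each generator $\Ce_S[\dim S]$ with $S \in (\LK\bs\LBB)_{cl}$ maps to $\cO_{\fB_{\phi(S)}}$, equipped with the canonical $\fS$-equivariant structure coming from $\LK$-equivariance. By Lemma~\ref{C_al}, the endofunctor $C_\al$ corresponds under $R\Gamma_{\LK^\circ}$ to $pr_\al^*pr_{\al*}$. Since $R\Gamma_{\LK^\circ}$ is additive, it preserves direct sums and direct summands; Proposition~\ref{spec_gen} therefore implies that the underlying coherent sheaf of every $R\Gamma_{\LK^\circ}(L)$ lies in $\cA$, so $R\Gamma_{\LK^\circ}(L) \in \cA^\fS$.

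For the reverse inclusion, let $\cL'$ denote the essential image in $\Coh^\fS(\fB)$ and fix $(\cF,\sigma) \in \cA^\fS$. Since $\cF \in \cA$, it is a non-equivariant direct summand of some $\cG_0 = pr_{\al_1}^*pr_{\al_1*}\cdots pr_{\al_n}^*pr_{\al_n*}(\bigoplus_i \cO_{\fB_{w_i}})$, and this $\cG_0$ equals $R\Gamma_{\LK^\circ}(\widetilde\cG_0)$ for $\widetilde\cG_0 = C_{\al_1}\cdots C_{\al_n}(\bigoplus_i \Ce_{S_i}[\dim S_i]) \in \cL$. The plan is then to invoke the induction--restriction adjunction for the finite group $\fS$: the object $\on{Ind}_{\LK^\circ}^\LK\on{Res}_{\LK^\circ}^\LK(\widetilde\cG_0)$ again lies in $\cL$, and its image under $R\Gamma_{\LK^\circ}$ is $\bigoplus_{\tau\in\fS}\tau^*\cG_0$ with the permutation $\fS$-structure. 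As $\cF$ is a summand of $\cG_0$, the $\fS$-equivariant subobject $\bigoplus_\tau \tau^*\cF \subset \bigoplus_\tau \tau^*\cG_0$ lies in $\cL'$. Finally, $(\cF,\sigma)$ is an $\fS$-equivariant summand of $\bigoplus_\tau \tau^*\cF$ via the averaging idempotent $\tfrac{1}{|\fS|}\sum_\tau \sigma_\tau$ built from the components of $\sigma$---the standard Maschke-type splitting of the counit $\on{Ind}\on{Res}(\cF,\sigma)\to(\cF,\sigma)$, valid in characteristic zero.

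The main obstacle is precisely this promotion of non-equivariant direct summand data to the $\fS$-equivariant level: the generation procedures on the two sides are parallel, but ``direct summand'' is stricter on the equivariant side. The induction--averaging bridge above settles this, using in an essential way that $\fS$ is finite and we work in characteristic zero.
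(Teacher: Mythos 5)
Your argument is correct and follows essentially the same route as the paper's proof: full faithfulness from Theorem~\ref{RGamma_full general}, identification of the image of generators via Lemmas~\ref{Ow}, \ref{C_al} and Proposition~\ref{spec_gen}, and the induction--restriction/averaging argument to pass between the $\fS$-equivariant and non-equivariant descriptions of the essential image (the paper compresses this last step into the single observation that any object is a summand of $\on{Ind}_{\LK^\circ}^{\LK}\on{Res}^{\LK}_{\LK^\circ}$ of itself, and organizes the generation half by first treating connected $\LK$). One small imprecision worth flagging: in the disconnected case a closed $\LK$-orbit $S$ decomposes into several $\LK^\circ$-orbits, so $R\Gamma_{\LK^\circ}(\Ce_S[\dim S])$ is a \emph{direct sum} of structure sheaves of components of $\fB$ rather than a single $\cO_{\fB_{\phi(S)}}$, but since $\cA$ is closed under sums and summands this does not affect either containment.
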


\proof The second equivalence follows from the first one provided that
the isomorphisms on Hom's induced by the equivalence are compatible
with the gradings on the two sides. We proceed to construct such an equivalence.

Assume first that $\LK$ is connected.
Then the functor $R\Gamma_\LK|_{\cL}$ is fully faithful by Theorem
 \ref{RGamma_full}. Its image is generated under the functors $pr_\al^*pr_{\al*}$
 by the images of constant sheaves on closed orbits in view of Lemma \ref{C_al}
 and Proposition \ref{spec_gen}. These images are exactly the structure sheaves
 of components of $\fB$ by  Lemma \ref{Ow}. This proves the Theorem for connected
 $\LK$. 

Turning to the general case, we see that the functor is fully faithful
by Theorem \ref{RGamma_full general}. We claim that an object $\F\in \Coh^{\fS}(\fB)$
lies in the image of $R\Gamma_{\LK}|_{\cL}$ iff $\operatorname{Forg}(\F)\in \Coh(\fB)$ lies in the image of $R\Gamma
_{\LK^\circ}|_{\cL^\circ}$, where the notation is self-explanatory. 
The "only if" part of the statement is clear from the commutative diagram
$$\begin{CD}
\cL_\LK @>{Res^\LK_{\LK^\circ}}>> \cL_\LK^\circ \\
@V{R\Gamma_\LK}VV @VV{R\Gamma_{\LK^\circ}}V \\
\Coh^\fS(\fB) @>\operatorname{Forg}>> \Coh (\fB) 
\end{CD}
$$
The "if" part follows by considering adjoint to functors in the last diagram and observing 
that $\F\in \cL_K$ is a direct summand in $Ind_{\LK^\circ}^\LK \circ Res^{\LK}_{\LK^\circ}(\F)$.
\qed

\section{Integral regular blocks for a quasi-split group}
\label{sect2}
\subsection{Statement of results}
\label{statement}
In this section we provide a description of a block $\M$ in the category
of $(\g,K)$ modules with a regular integral generalized infinitesimal character
for a pair $(\g,K)$ coming from a quasi-split real group $\GR$. 

 It will be convenient to enlarge the category $\M$
 to the category $\Mh$ of pro-objects in $\M$. Every irreducible object in $\M$
 admits a projective cover in $\Mh$ which is unique up to an isomorphism.
 Let $\P\subset \Mh$ be the full  subcategory
 formed by finite sums of projective
 covers of irreducible (equivalently, any) objects in $\M$. It is easy to 
 see that $\P$ can be also realized as a full subcategory in the category of 
 finitely generated modules over $U(\g)\otimes _{Z(\g)} Z(\g)_{\hat{\la}}$
 equipped with a compatible $K$-action, where $Z(\g)_{\hat{\la}}$
 is the completion of the center of the enveloping $Z(\g)$ at the maximal
 ideal corresponding to $\la$. 
 
 Let $\fBh$ denote the completion of $\fB$ at zero, by which we understand the spectrum
 of the completion of the coordinate ring $\O(\fB)$ at the corresponding maximal ideal.
  Define a  full subcategory  $\Ah\subset \Coh(\fBh)$
 as the image of the category $\A$  introduced in Theorem \ref{pribl_thm}
 under the completion functor $\Coh(\fB)\to \Coh(\fBh)$. 
 Likewise, $\Ah^\fS\subset \Coh^\fS(\fBh)$ is defined as the essential image of
 $\A^\fS$.
 
 The goal of this section is the following
 
 \begin{Thm}
 \label{monodromic main theorem}
 There exists a canonical equivalence $\P\cong \Ah^\fS$.
 \end{Thm}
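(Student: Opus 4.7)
My plan is to mirror the strategy of Section~\ref{sect3} on the quasi-split side. I would construct a ``Soergel-type'' functor $\bV\colon \P \to \Coh^\fS(\fBh)$, prove that it is fully faithful, and identify its essential image with $\Ah^\fS$. The role played on the equivariant side by the functor $R\Gamma_{\LK}$ is played here by a combination of (a) translation to a singular infinitesimal character near $-\rho$, and (b) the deformation in the $\fh^*$-direction available because modules in $\M$ carry a \emph{generalized} rather than a strict infinitesimal character.

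\textbf{Construction.} Fix a projective pro-cover $P_{\on{ap}}\in\P$ of an irreducible object supported on the open $K\times H$-orbit $X_0\subset X$. On the one hand, the action of the center of $U(\g)$ through the Harish--Chandra isomorphism equips every object of $\Mh$ with a $\bC[\fh^*]^{\wedge}_\la$-action; this gives the $\fh^*$-factor of $\fBh$. On the other hand, the translation $T=T_\la^{-\rho}\colon \M\to \M_{-\rho}$ carries $P_{\on{ap}}$ to a pro-object whose (completed) endomorphism ring identifies with the completion of $\O(\a^*/\Wbp)$, via the realization of $\a^*/\Wbp$ as the deformation space of the irreducible principal series at $-\rho$ mentioned in the introduction. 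Together these give a ring homomorphism $\O(\fBh)\to\End_{\Mh}(P_{\on{ap}})^{\rm op}$, and I set $\bV(-)=\Hom_{\Mh}(P_{\on{ap}},-)$. The $\fS$-equivariant structure on $\bV(P)$ comes from the action of $\fS=\fS_{mon}$ on $\Mh$ by twists coming from the exact sequence \eqref{Wbprime}.

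\textbf{Full faithfulness.} The main obstacle is the analog of Soergel's Struktursatz, namely the identification $\End_{\Mh}(P_{\on{ap}})^{\rm op}\cong \O(\fBh)$. I would attack this by computing $\Ext^\bu$ in the deformed category of weakly equivariant $D$-modules on $X_0$. Since $X_0$ is affine and the only irreducibles involved are minimal principal series with standard $=$ costandard extensions, their deformed self-extensions can be computed explicitly; the matching Theorem~\ref{thm_match} is then what identifies the resulting ring with $\O(\fBh)=\bC[\a^*/\Wbp]^{\wedge}\ot_{\bC[\fh^*/W]^{\wedge}}\bC[\fh^*]^{\wedge}$. Granted this, full faithfulness on all of $\P$ is standard: every $P\in\P$ is a direct summand of an iterated application to $P_{\on{ap}}$ of wall-crossing functors $\theta_\al=T_{-\rho}^{\la}T_\la^{-\rho}$, and by self-adjointness (up to shift) of $\theta_\al$ together with the intertwining $\bV\circ \theta_\al\cong pr_\al^*pr_{\al*}\circ \bV$, the computation reduces to the single algebra $\End_{\Mh}(P_{\on{ap}})$.

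\textbf{Essential image.} By Theorem~\ref{thm_match}, $\fB_{mon}\cong \fB_{eq}$ and $\fS_{mon}\cong \fS_{eq}$, so the target $\Coh^\fS(\fBh)$ is unambiguous. Under this identification $\bV(P_{\on{ap}})\cong \O_\fBh$, viewed as the structure sheaf of the ``open-orbit'' component corresponding under Vogan duality to the base-point closed $\LK$-orbit $S_0$. Combined with the intertwining $\bV\circ\theta_\al\cong pr_\al^*pr_{\al*}\circ\bV$, this shows the essential image of $\bV|_\P$ is generated inside $\Coh^\fS(\fBh)$ by $\O_\fBh$ under $pr_\al^*pr_{\al*}$, direct sums, and direct summands; i.e.\ it is precisely the image under completion of the category $\cA^\fS$ of Theorem~\ref{pribl_thm}, which is $\Ah^\fS$. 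As indicated, the hard step is the Struktursatz computation of $\End_{\Mh}(P_{\on{ap}})$; every other ingredient is either a standard translation-principle argument in the spirit of category $\mathcal{O}$ or is already supplied by the combinatorial matching of Section~\ref{sect1}.
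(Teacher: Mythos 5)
Your proposal has the right overall shape — a Soergel-type functor, a Struktursatz, a reduction by wall-crossing and its intertwining property, an essential-image computation via generation — but the two key steps fail as stated.

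First, you home against the wrong projective, and the Struktursatz claim is self-contradictory. For a single deformed principal series $\Pi_\LL$ on the open orbit, $\End_{\Mh}(\Pi_\LL)=\End(\LL\otimes\cE)$ is a completion of a ring of Krull dimension $\dim\fa$, namely the coordinate ring of a \emph{single} irreducible component $\fB_v\subset\fB$; this is exactly what Proposition \ref{Pgen}(c) records, via $\Tt(\Pi_{\LL_v})\cong\hatt{\O_{\fB_v}}$. That ring is much smaller than $\O(\fBh)$, and indeed your own later sentence, identifying $\bV(P_{\on{ap}})$ with the structure sheaf of a single component, contradicts the claimed isomorphism $\End_{\Mh}(P_{\on{ap}})^{\rm op}\cong\O(\fBh)$. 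As a consequence $\Hom(P_{\on{ap}},-)$ lands in modules over a one-component ring, not in $\Coh^\fS(\fBh)$, and cannot give the desired equivalence. The paper's functor is not a Hom out of a cover of an open-orbit irreducible: it is the extended translation functor $\Tt$, which by adjunction is $\Hom(Q,-)$ for $Q=T_{-\rho\to\la}(P)$, the projective cover of the unique irreducible surviving $T_{\la\to -\rho}$ (Lemma \ref{onespecial}), i.e.\ the irreducible of \emph{maximal} Gelfand--Kirillov dimension --- the correct analogue of Soergel's antidominant projective, which is very much not $\Pi_\LL$.

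Second, the full-faithfulness reduction is incomplete even granting a Struktursatz. Self-adjointness of $\theta_\al$ plus the intertwining $\bV\theta_\al\cong pr_\al^*pr_{\al*}\bV$ only reduces the problem to proving $\Hom(\F,Q)\iso\Hom(\bV\F,\bV Q)$ for the correct generator $Q$ and \emph{all} projective $\F$ in $\P$, not merely $\F=Q$; this is a genuine statement and is precisely the content of Proposition \ref{translff_prop}. Its proof needs substantial input absent from your outline: injectivity of the adjunction units into $T_{-\rho\to\la}T_{\la\to-\rho}$, established via the Kostant--Rallis freeness of $U(\g)\otimes_{U(\fk)}V$ over $Sym(\fa)^{W_\RE}$, the resulting torsion-freeness and Cohen--Macaulay estimates, and Vogan's Gelfand--Kirillov criterion for vanishing under $T_{\la\to-\rho}$. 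Likewise the endomorphism computation itself (Lemma \ref{86}) is not an open-orbit $\Ext$ calculation: it passes to the singular block, uses a BGG-reciprocity argument based on the Chang/Vogan description of the large component of principal series, the cross action of $W^\theta$ (Corollary \ref{cross_Pi}), and a double-commutant argument to obtain $\hatt{\O(\a^*)}^{\Wb}$, first for adjoint $G$ and then transported to general $G$ by $\fS$-equivariance.
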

 
The proof of this theorem follows from propositions~\ref{singbl_prop}, \ref{translff_prop}, and \ref{Pgen} which we state in this subsection.
The proof of these propositions occupies the rest of the section. 
 
 Along with $\M$ we will need to consider the singular category
 of Harish-Chandra modules $\M_{sing}$. To define it assume first that
 $G$ is simply connected and thus $K$ is connected. Then we have an irreducible
 $G$-module with highest weight $\lambda+\rho$ which can be used
 to define the translation functor $T_{\la\to -\rho}$ from the category of
 $(\g,K)$-modules with generalized infinitesimal character $\la$ to 
 the category of $(\g,K)$-modules with generalized infinitesimal character $-\rho$.
 We write $\M_{sing}$  for the Serre subcategory generated by the image of $\M$ under 
 $T_{\la\to -\rho}$.

 We now drop the assumption that $G$ is simply connected. 
 Let $G_{sc}$ be the  simply connected
 cover of $G$ and let $K_{sc}$ be the preimage of $K$ under the covering map
 $G_{sc}\to G$. Recall that the finite group $Z(G)\cap K$ acts on all modules in $\M$
by a fixed character, which we denote by $\chi$. Let $\chiti$ be the pull back of
$\chi$ to the group $Z(G_{sc})\cap K_{sc}$. 
The pull back functor
from $(\g,K)$-modules to $(\g,K_{sc})$-modules is fully faithful and $Z(G_{sc})\cap K_{sc}$
acts on the modules in the image by $\chiti$.

Set $\chiti '=\chiti  \cdot \chi_\la^{-1}$ where $\chi_\la$ is the character by which
 $Z(G_{sc})\cap K_{sc}$ acts on the irreducible $G_{sc}$ module of highest weight
 $V_{\la+\rho}$.
 Consider  the category of $(\g,K_{sc})$  modules
 with generalized infinitesimal character $-\rho$, where $Z(G_{sc})\cap K_{sc}$ acts
 by the character $\chiti'$. 
The  translation functor $T_{\la\to -\rho}$ sends $\M$ to that category; we let 
 $\M_{sing}$ denote the Serre subcategory generated by the image of $\M$ under 
 $T_{\la\to -\rho}$.

We write $\P_{sing}$ for the category of projective pro-objects in $\M_{sing}$
which are finite sums of projective covers of irreducible objects. Let us write $\Coh_0^\fS(\a^*/\Wbp)$ for the category of $\cS$-equivariant coherent sheaves on $\a^*/\Wbp$ set theoretically supported at zero. We denote by $\widehat{\a^*/\Wbp}$ the completion of $\a^*/\Wbp$ at zero and then write $\Coh_{fr}^\fS(\widehat{\a^*/\Wbp})$ for the category of
projective (equivalently, free) $\fS$-equivariant coherent sheaves on  $\widehat{\a^*/\Wbp}$.

\begin{Prop}\label{singbl_prop}
We have canonical equivalences: $\M_{sing}\cong \Coh_0^\fS(\a^*/\Wbp)$ and 
$\P_{sing}\cong \Coh_{fr}^\fS(\widehat{\a^*/\Wbp})$.
\end{Prop}

Recall the extended enveloping algebra  $\widetilde U(\fg)=U(\g) \otimes _{Z(\g)}
Sym(\h)$, where the action of the center of the enveloping algebra $Z(\g)$
on $Sym(\h)$ comes from the Harish-Chandra isomorphism, see, for example, 
 \cite{BeGi}.

Let $\widetilde \M_{sing}$ be the category of $(\widetilde U(\fg), K)$-modules, such that
restricting the action of $\widetilde U(\fg)$ to $U(\g)$ one gets a module in $\M_{sing}$.
We will call  $\widetilde \M_{sing}$ the extended singular block.

\begin{Cor}
\label{cor}
The extended singular block $\widetilde \M_{sing}$  is
naturally equivalent to $\Coh_0^\fS(\fB)$.
\end{Cor}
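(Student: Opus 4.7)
The plan is to deduce this corollary from Proposition~\ref{singbl_prop} by analyzing the additional structure that a $(\tilde U(\fg),K)$-module carries over a $(\fg,K)$-module. Recall that $\tilde U(\fg)=U(\g)\otimes_{Z(\g)}Sym(\h)$, where the tensor product is formed along the Harish--Chandra embedding $Z(\g)\hookrightarrow Sym(\h)^W\subset Sym(\h)$. Hence an object of $\tilde\M_{sing}$ is precisely an object $M\in\M_{sing}$ together with a $\g$- and $K$-linear action of $Sym(\h)=\O(\fh^*)$, subject to the condition that the restricted action of $Sym(\h)^W=\O(\fh^*/W)$ recovers the given $Z(\g)$-action on $M$.

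The main step is to identify, under the equivalence $\M_{sing}\cong\Coh_0^\fS(\a^*/\Wbp)$ furnished by Proposition~\ref{singbl_prop}, the $Z(\g)$-action on $M$ with the natural $\O(\fh^*/W)$-action obtained by pullback along the canonical morphism $\a^*/\Wbp\to\fh^*/W$. This compatibility is essentially built into the construction sketched in the introduction: the ring $\O(\a^*/\Wbp)$ enters the picture (via its completion at zero) as the base of deformations of irreducible principal series at the singular central character $-\rho$, so the $Z(\g)$-action on these deformations must factor through the map $\a^*/\Wbp\to\fh^*/W\cong\Spec Z(\g)$ by construction. This is where the actual content lies; the rest of the argument is formal.

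Once that identification is in place, giving $M$ a compatible $Sym(\h)$-action is the same as promoting its $\O(\a^*/\Wbp)$-module structure to a module structure over the fibre product
$$
\O(\a^*/\Wbp)\otimes_{\O(\fh^*/W)}\O(\fh^*)=\O(\fB),
$$
and the $\fS$-equivariance is preserved because $\fS$ acts only on the first factor of this fibre product. The set-theoretic support at $0\in\a^*/\Wbp$ pulls back to support at the fibre over $0$ in $\fB$, which set-theoretically is a single closed point, so the resulting sheaf lies in $\Coh_0^\fS(\fB)$. The inverse functor is manifest: a sheaf $\F\in\Coh_0^\fS(\fB)$ restricts along the first projection to an object of $\Coh_0^\fS(\a^*/\Wbp)\cong\M_{sing}$, and its residual $\O(\fh^*)$-action encodes the extra $Sym(\h)$-action upgrading it to a $(\tilde U(\fg),K)$-module in $\tilde\M_{sing}$. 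The two assignments are mutually inverse by construction, yielding the canonical equivalence claimed.
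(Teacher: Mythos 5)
Your argument is correct and is precisely the proof the paper leaves implicit: by definition $\tilde U(\fg)=U(\fg)\otimes_{Z(\fg)}Sym(\fh)$, so $\tilde\M_{sing}$ consists of objects of $\M_{sing}$ equipped with a compatible $Sym(\fh)$-action extending the $Z(\fg)$-action, and since the equivalence of Proposition~\ref{singbl_prop} intertwines the $Z(\fg)$-action with the $\O(\fh^*/W)$-module structure pulled back along $\a^*/\Wbp\to\fh^*/W$ (this is visible in the proof of that proposition, where the image of $Z(\fg)$ in $End(P)$ is identified with $\widehat{\O(\a^*)}^{W^\theta}$ via the Chevalley--Kostant--Rallis restriction $\O(\fh^*/W)\twoheadrightarrow\O(\a^*/W^\theta)$), the extra $Sym(\fh)$-action amounts exactly to a module structure over $\O(\a^*/\Wbp)\otimes_{\O(\fh^*/W)}\O(\fh^*)=\O(\fB)$. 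The checks on $\fS$-equivariance and set-theoretic support that you carry out are also the right ones, so the proposal matches the intended argument.
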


The arguments in  section 1 of \cite{BeGi} show that the translation functor $T=T_{\la\to -\rho}$
admits a natural lifting to the ``extended translation functor"
$\widetilde T:\M\to \widetilde \M_{sing}$. If we write $res: \widetilde \M_{sing} \to \M_{sing}$ for the restriction functor then 
\begin{equation}\label{Tres}
T = res \circ \widetilde T\,.
\end{equation}
By Corollary~\ref{cor} the extended translation functor can be viewed as a functor $\widetilde T:\M \to \Coh_0^\fS(\fB)$.
The functor $\widetilde T$ extends to the category of pro-objects and we continue to write $\widetilde T$ for the  resulting extension.
 
\begin{Prop}\label{translff_prop}
The functor $\Tt|_\P$ is  fully faithful.
\end{Prop}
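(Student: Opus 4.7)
The plan is a Soergel-type Struktursatz argument based on the biadjoint pair of translation functors $T=T_{\la\to-\rho}$ and $T'=T_{-\rho\to\la}$ between $\M$ and $\M_{sing}$. Via Proposition~\ref{singbl_prop}, $\P_{sing}\cong\Coh_{fr}^{\fS}(\widehat{\a^*/\Wbp})$, and the rank-one free module picks out a distinguished projective $\widetilde L_0\in\P_{sing}$. Since $T'$ is exact and biadjoint to the exact $T$, it preserves projectivity, so $P_0:=T'(\widetilde L_0)\in\P$. Writing $\theta_\al:=T'_\al T_\al$ for wall-crossing through a simple root $\al$, I claim every indecomposable projective in $\P$ is a direct summand of some $\theta_w P_0$; this is a combinatorial statement following from the transitivity of the cross action of $W^\theta$ on $\LL_\MM^{ad}(X_0)$ (Section~\ref{sect1}) together with standard $\tau$-invariant reasoning on which simples are killed by translations to the wall. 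It thus suffices to check full faithfulness on the full subcategory generated by these $\theta_w P_0$.

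The crux is two compatibility statements on the singular side:
$$\tilde T\circ T'\;\cong\;\cO(\fBh)\otimes_{\cO(\widehat{\a^*/\Wbp})}(-),\qquad \tilde T\circ\theta_\al\;\cong\;pr_\al^{*}\,pr_{\al*}\circ\tilde T,$$
with $pr_\al\colon\fB\to\fB_\al$ as in Section~\ref{ZFSF}. The first expresses the fact that the extended translation $\tilde T$ supplies the extra $\cO(\widehat\h^*)$-action precisely realizing the fiber product $\fB=\a^*/\Wbp\times_{\h^*/W}\h^*$; it depends essentially on Theorem~\ref{thm_match}. The second follows by factoring $\theta_\al$ through the $\al$-wall and applying the first identity together with the base-change description of $\fB_\al$.

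Granting these, for $X,Y\in\P_{sing}$ biadjunction of $(T,T')$ gives $\Hom_\M(T'X,T'Y)=\Hom_{\M_{sing}}(X,TT'Y)$, while projectivity of $X$ over $\cO(\widehat{\a^*/\Wbp})$ together with the base-change adjunction gives
$$\Hom_{\Coh(\fBh)}(\tilde TT'X,\tilde TT'Y)=\Hom_{\cO(\widehat{\a^*/\Wbp})}\bigl(X,\;\cO(\fBh)\otimes_{\cO(\widehat{\a^*/\Wbp})}Y\bigr).$$
These agree because the $\cO(\widehat{\a^*/\Wbp})$-module underlying $\cO(\fBh)\otimes Y$ is exactly $TT'Y$. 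Inserting wall-crossing functors $\theta_\al$ on both sides and invoking the second intertwining identity extends the equality to all $\theta_w P_0$ simultaneously, completing the proof.

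The main obstacle is the base-change identity $\tilde T\circ T'\cong\cO(\fBh)\otimes_{\cO(\widehat{\a^*/\Wbp})}(-)$. This is the representation-theoretic incarnation of Theorem~\ref{thm_match}: the $\cO(\widehat\h^*)$-action supplied by the extended enveloping algebra $\tilde U(\g)$ on translations to the singular character $-\rho$ must be shown to glue with the $\cO(\widehat{\a^*/\Wbp})$-action on $\M_{sing}$ exactly along $\widehat{\h^*/W}$. While the analogous statement for complex groups is implicit in~\cite{S0}, here one has to keep careful track of the subgroup $\Wbp\subset W^\theta$ arising from the choice of quasi-split block, and of the $\fS$-equivariance coming from the disconnectedness of $K$.
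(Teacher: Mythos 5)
Your proposal takes a genuinely different route from the paper, and it has a gap that I do not think is easily filled.

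The paper does not use wall-crossing functors or any combinatorial generation argument at this stage. Instead it proceeds homologically: (1) using \cite[Proposition 3.5]{BeGi}, it shows directly that the desired isomorphism holds whenever either argument is of the form $T_{-\rho\to\lambda}(M)$ for \emph{any} $M\in\M_{sing}$ (not just projective ones) -- this is an adjunction juggle with $\tilde U(\fg)\otimes_{U(\fg)}(-)$, and it is where your first compatibility statement appears, though in a different guise; (2) it then fits an arbitrary projective $Q$ into an exact sequence $0\to Q\to T_{-\rho\to\lambda}M_1\to T_{-\rho\to\lambda}M_2$, with $M_1=T_{\lambda\to-\rho}(Q)$ and $M_2$ its cokernel. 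The substance of the proof is in step (2): one must show the unit map $Q\to T_{-\rho\to\lambda}T_{\lambda\to-\rho}(Q)$ is injective and that its cokernel is again torsion-free. This is achieved by exhibiting a projective generator $Q=(U(\fg)\otimes_{U(\fk)}V)_{\hat\lambda}$ and invoking Kostant--Rallis (freeness over $Sym(\fa)^{W_\bR}$), combined with a Cohen--Macaulay / Gelfand--Kirillov dimension argument for the cokernel. Nothing about wall-crossings or the cross action enters.

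The gap in your argument is the reduction step: the assertion that every indecomposable projective in $\P$ is a direct summand of some $\theta_w P_0$ with $P_0=T'(\tilde L_0)$. You dismiss this as ``a combinatorial statement following from the transitivity of the cross action \ldots together with standard $\tau$-invariant reasoning,'' but it is in fact a nontrivial Struktursatz-type claim and it is \emph{not} the statement proved in Proposition \ref{Pgen}(a). That proposition generates $\P$ under $R_\alpha$ from the whole family of deformed principal series $\Pi_\LL$, not from the single object $P_0$; and $P_0=T'(\tilde L_0)$ is the projective cover of $L_0$, not one of the $\Pi_\LL$. To get from one generating set to the other you would need to show, e.g., that each $\Pi_\LL$ is a summand of some $\theta_w P_0$ (or conversely), and neither direction is obvious -- an injection of projectives need not split, and a mere $\Hom$-computation is not enough. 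There is also a second, structural concern: your second compatibility statement is Proposition \ref{Pgen}(b), whose proof in the paper uses the same \cite[Proposition 3.5]{BeGi} input; while there is no actual circularity, your proof ends up leaning on two auxiliary statements from later in the section plus an unproved generation claim, whereas the paper's torsion-freeness argument is self-contained apart from the single \cite{BeGi} identity. If you want to pursue your route, the missing ingredient is a real proof of the generation claim; absent that, the paper's resolution-by-$T'(M)$ argument is both shorter and more robust.
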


Before describing the last ingredient of the proof we define an important set 
of objects in $\Mh$. Let $L=j_{!*}(\LL[\dim X_L])$  be an irreducible object in $\M$, where we have written $j:X_L\to X$ for the embedding  of
the open $K$-orbit in the support of $L$ and $\LL$ for the corresponding irreducible local system.
Let $\cE$ be the free pro-unipotent local system on the torus $A_L=X_L/K$
(see \cite{Beil} for the original reference and \cite{BY} where it was used in a closely related context).

\begin{Def}\label{def_pr}
The {\em deformed principal series} module $\Pi_\LL$ is given by:
$$\Pi_\LL=j_!(\LL\otimes \cE[\dim X_L]).$$
\end{Def}
We consider $\Pi_\LL$ as an object in $\Mh$. 
We will only be interested in  the deformed principal series modules attached to the maximally
split Cartan, i.e., in the cases when dimension of the torus $A_L$ is maximal.
It is easy to see that $\Pi_\LL$ is a projective cover of $L$ in the full
abelian subcategory of $\Mh$ consisting of sheaves supported on the closure of $X_L$.
In particular, $\Pi_\LL\in \P$ when $X_L$ is the open orbit.

\medskip
 
 Recall that the we have fixed a bijection between $W^\theta/\Wb$ and the
 set of $K\times H$-equivariant local systems on $X_0$ such that $j_{!*}(\LL)\in \MM$.
 For $v \in W^\theta/\Wb$ let $\LL_{v}$ be the corresponding local system and $L_v$
  be the corresponding irreducible object.

Our next goal is to describe  the image of the fully faithful functor $\Tt$ on $\cP$. To that end we recall the notion of wall-crossing functors, we
use \cite{BeGi} and \cite[Chapter 7]{Hum} as a general reference.  
Recall that $R_\al =  T_{\mu \to \la}\circ T_{\la\to\mu}$ where $\mu$ is a (generic) weight on the $\al$-wall. 
The translation functors $T_{\la\to\mu}$ are defined in the same manner as the $T_{\la\to-\rho}$ introduced earlier. 
It is a standard fact (see e.g. \cite[Proposition 3.1(a)]{BeGi}) that for an irreducible module $L$ its translation to the wall $ T_{\la\to\mu}(L)$ is either zero or irreducible. Furthermore, if we are given another irreducible module $L'\not \cong L$ we have $T_{\la\to\mu}(L)\not \cong T_{\la\to\mu}(L')$ unless $T_{\la\to\mu}(L)=0= T_{\la\to\mu}(L')$. Using adjointness between $T_{\mu \to \la}$ and $ T_{\la\to\mu}$
we see that for an irreducible module $L$ the module $R_\al(L)$ has a simple socle isomorphic to $L$, thus it belongs to 
the block of $L$ and we get the {\em wall-crossing functor}
$R_\al:\M\to \M$. We note further that $R_\al:\P\to \P$.

Also, recall from subsections \ref{sect_comp} and \ref{matching}
 that the components of $\fB$ are in bijection with
 $W/\Wbp$, let $\fB_v$ be the component corresponding to $v\in W^\theta/\Wbp$.
 The notation $W_\al=\{1,s_\al\}\subset W$ was introduced in subsection~\ref{ZFSF}.
 
\begin{Prop}\label{Pgen}
a) $\P$ is generated by the  deformed principal series modules
$\Pi_{\LL_v}$, $v\in W^\theta/\Wbp$ under the action of the functors $R_\al$ and taking direct
sums and summands.

b) We have a natural isomorphism 
$$\Tt \circ R_\al \cong pr_\al^* pr_{\al *}\circ \Tt.$$
Here we identified the target category of $\Tt$ with $\Coh_0(\fB)$, 
notation 
 $pr_\al$
has been introduced in section \ref{ZFSF}.

c) If $G$ is adjoint (so that $\Wb=\Wbp$) then $\Tt$ sends $\Pi_{\LL_v}$ to $\hatt{\O_{\fB_v}}$,
$v\in W^\theta/\Wbp$.
For a general $G$ the functor $\Tt$ sends $\Pi_{\LL_v}$ to $\bigoplus\limits_{\widetilde v} 
\hatt{\O_{\fB_{\widetilde v}}},$ where $\widetilde v$ runs over the set of elements in $W^\theta/\Wbp$
projecting to $v\in W^\theta/\Wb$; the sheaf 
$\bigoplus\limits_{\widetilde v} \hatt{\O_{\fB_{\widetilde v}}}$ is equipped with the obvious $\fS$-equivariant structure.

\end{Prop}

\proof 
 Recall that (see e.g. \cite{BeGi}), 
for $M\in \MM$ its wall-crossing 
$R_\alpha(M)$ 
fits in the distinguished triangle 
\begin{equation}\label{RtoI}
M\to R_\al(M)\to  I_\al(M)\to M[1],
\end{equation}
where  $I_\al$ is the  intertwining functor corresponding to the simple root $\al$, see section \ref{cross}.

 This allows us to employ the method of the geometric proof of Casselman's submodule theorem in \cite{BeBe}. Let us consider the projective cover $P_L$ of the irreducible object $L$ in $\MM$. There exists a sequence of simple roots $\al_1,\dots, \al_m$,
such that $R_{\al_1}\circ \cdots \circ R_{\al_m}(L)$ has nonzero restriction to
the open orbit in $X$. We see this as follows. Using induction on the codimension of support
of $L$,  we reduce to showing that for every irreducible $L\in \MM$ with $\supp(L)\subsetneq X$,
there exists
a simple root $\al$, such that $\supp(R_\al(L))\supsetneq \supp (L)$.
It is easy to see that this is the case when $\dim (\supp(L))=\dim (\pi_\al(\supp(L))$,
which obviously does happen for some simple root $\al$ unless $\supp(L)= X$.

It follows from the  above that there exists an $\LL$ such that $\Hom(\Pi_\LL, R_{\al_1}\circ \cdots \circ R_{\al_n}(L))\neq 0$. Thus, by adjunction, $\Hom(R_{\al_m}\circ \cdots \circ R_{\al_1}(\Pi_\LL), L)\neq 0$. Therefore $P_L$ embeds in the projective $R_{\al_m}\circ \cdots \circ R_{\al_1}(\Pi_\LL)$
 proving (a).

Part (b) follows from a general property of translation functors proved in \cite[Proposition 3.5]{BeGi}:
the functor $ T_{\la\to \mu}\circ  T_{\mu\to \la}$ is isomorphic to the functor
$N\mapsto \widetilde U(\fg)\otimes _{\widetilde U(\fg)^{W_\al}} N= Sym(\fh)\otimes_{Sym(\fh)^{W_\al}}N$ (In the notation of \cite{BeGi} $W_\al$ is called $W_\mu$). Now,
\begin{equation*}
\begin{gathered}
\Tt \circ R_\al (M) = \Tt_{\la \to -\rho} \circ T_{\mu \to \la}\circ T_{\la\to\mu} (M) =  \Tt_{\mu \to -\rho}  \circ T_{\la \to \mu} \circ T_{\mu \to \la}\circ T_{\la\to\mu} (M)
\\
= \Tt_{\mu \to -\rho} ( Sym(\fh)\otimes_{Sym(\fh)^{W_\al}}T_{\la\to\mu} (M)) =  Sym(\fh)\otimes_{Sym(\fh)^{W_\al}}\Tt_{\mu \to -\rho} T_{\la\to\mu} (M)
\\
= pr_\al^* pr_{\al *}\circ \Tt (M)
\end{gathered}
\end{equation*}

Part (c) will be verified in section \ref{singbl_sec}
after the proof of Proposition \ref{singbl_prop}.   

\subsection{Fully faithful translation to $\M_{sing}$: proof of Proposition \ref{translff_prop}}

 We have to show that
\begin{equation}\label{is}
\Hom(P,Q)\ \iso \ \Hom(\widetilde T(P), \widetilde T(Q))
\end{equation}
for $P,Q\in \PP$. 

First we claim that \eqref{is} holds when either $P$ or $Q$ is of the form $T_{{-\rho}\to \lambda}(M)$
for some $M\in \widehat\M_{-\rho}$. This follows directly from results of \cite[\S 3]{BeGi}. 
In more detail, ~\cite[Proposition 3.1]{BeGi} implies that \eqref{is} holds if either $P=\tilde\theta_l^+(M)$
or $Q=\tilde\theta_r^+(M)$ where $\tilde \theta_l^+$ and $\tilde \theta_r^+$ are (the notation of \cite{BeGi} for) the left and the right
adjoint to the extended translation functor $\tilde T$, respectively: for $P=\tilde\theta_l^+(M)$ this is recorded in \cite[Corollary 3.3]{BeGi},
 the case $Q=\tilde \theta_r^+(M)$ is similar. Since $T_{{-\rho}\to \lambda}$ is biadjoint to $T_{\la\to {-\rho}}$ and in view of
 \eqref{Tres} we see that 
 $$ T_{{-\rho}\to \lambda} \cong \tilde \theta_l^+ \circ Ind \cong  \tilde \theta_r^+ \circ coInd,$$
where $Ind$ and $coInd$ are the left and right adjoint to $res$ respectively. Thus the essential image
of $ T_{{-\rho}\to \lambda}$ is contained in those of both $ \tilde \theta_l^+$ and  $\tilde \theta_r^+$
which yields the claim made in the first sentence of the paragraph.

We now turn to the general case. It suffices to prove the statement for a projective generator $Q$. To this end
 it suffices to show that for the projective generator $Q$ we can find an exact sequence of the form 
$$
0\to Q \to T_{{-\rho}\to\lambda} M_1 \to T_{{-\rho}\to\lambda} M_2.
$$
In fact we claim that we obtain such a sequence by setting $M_1=T_{\lambda\to{-\rho}}(Q)$,
$M_2=T_{\lambda\to{-\rho}}(C)$, where $C=\coker(Q\to T_{{-\rho}\to\lambda}(M_1))$.
The maps are the canonical adjunction arrows. One sees easily that we are reduced to showing that the maps
\begin{subequations}
\begin{equation}\label{Q}
Q\to  T_{{-\rho}\to\lambda} T_{\lambda\to{-\rho}}(Q)
\end{equation}
and
\begin{equation}\label{C}
C\to  T_{{-\rho}\to\lambda} T_{\lambda\to{-\rho}}(C)
\end{equation}
\end{subequations}
are injective. 

We prove the statements above for a specific projective generator which we construct as follows. Let us write by $\cQ = U(\fg)\otimes_{U(\fk)}V$, where $V$ is a finite dimensional $K$-representation, $U(\fg)$ acts naturally from the left and $K$ acts diagonally: $k(u\otimes v) = \Ad(k)(u)\otimes kv$. The module $\cQ$ is a projective $(\fg,K)$-module (no conditions on the central character) as it represents the exact functor $\Hom_K(V,\ )$. We get a projective generator of $\cM$ by choosing $V$ so that it includes at least one $K$-type from each irreducible module in $\cM$ and then setting $Q \in \Mh$ to be the formal completion of $\cQ$. 
More precisely, the pro-object $Q\in \Mh$ is defined by $Q=\varprojlim\limits_n (\cQ/\fm_\la^n \cQ)_\MM$, where $\fm_\la$ is the ideal in the center of the enveloping $Z(\g)$ corresponding to $\la$ and
the subscript $_\MM$ denotes the projection of the $(\g,K)$-module to the block $\MM$. The corresponding module over $U(\g)_{\hat{\la}}$ is clearly 
a direct summand in $\cQ\otimes_{Z(\g)} Z(\g)_{\hat{\la}}$ (see \S \ref{statement} for notation).  

Let us write $\fg = \fk \oplus \fp$ for the Cartan decomposition with respect to $\theta$. By a result of Kostant and Rallis, see \cite[Theorem 15]{KR} and also \cite[Theorem 1.6]{BBG}, $\cQ$ is a free module over the algebra $Sym(\fp)^\fk= Sym(\fa)^{W_\bR}$. Furthermore, the center $Z(\fg)$ of $U(\fg)$ acts on $\cQ$ via the projection $Sym(\fh)^{W}\to Sym(\fa)^{W_\bR}$, i.e., via the closed  subscheme $\fa^*/W_\RE\subset \h^*/W$. In particular, $Q = \cQ_{\widehat \lambda}$ is torsion free over $\cO(\fa^*/W_\RE)_{\widehat \lambda}$. Note also that the considerations above show that the center $Z(\fg)$ acts on any finitely generated $(\fg,K)$-module via the quotient $Sym(\fa)^{W_\bR}$, i.e., the category of finitely generated $(\fg,K)$-modules is supported on the closed subscheme $\fa^*/W_\RE\subset \h^*/W$. 

We have the following:
\begin{lem}
If $M\in\cM$ is torsion free over $\cO(\fa^*/W_\RE)_{\widehat \lambda}$ then $M\to  T_{{-\rho}\to\lambda} T_{\lambda\to{-\rho}}(M)$ is injective. 
\end{lem}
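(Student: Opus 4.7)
The plan is to identify the target $T_{-\rho\to\lambda}T_{\lambda\to-\rho}(M)$ with a concrete tensor product and then exploit a splitting argument. First I would extend the reasoning in the proof of Proposition~\ref{Pgen}(b) to the maximally singular weight $\mu = -\rho$, for which (in the shifted Harish-Chandra parameterization) $W_\mu = W$ and hence $\tilde U(\fg)^{W_\mu} = U(\fg)$. Combining the formula of \cite[Proposition~3.5]{BeGi} with compatibility of translation functors through intermediate walls, and carefully tracking the completion at $\lambda$, identifies $T_{-\rho\to\lambda}T_{\lambda\to-\rho}(M)$ with $R \otimes_{\bar Z_{\hat\lambda}} M$, where $\bar Z_{\hat\lambda} = \cO(\widehat{\fa^*/\Wbp})$ acts on $M$ via the central character (using Kostant--Rallis, as just recalled), and $R = \cO(\widehat{\fB})$ is the coordinate ring of the completed block variety. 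Under this identification the unit map becomes the standard $m \mapsto 1 \otimes m$ induced by the finite flat projection $\widehat{\fB} \to \widehat{\fa^*/\Wbp}$.

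Next I would analyze the ring inclusion $\bar Z_{\hat\lambda} \hookrightarrow R$. Because $\fB = \fa^*/\Wbp \times_{\fh^*/W} \fh^*$ is a fibered product, $R$ is the base-change to $\bar Z_{\hat\lambda}$ of $\operatorname{Sym}(\fh)$, which is finite free of rank $|W|$ over $\operatorname{Sym}(\fh)^W$ by Chevalley--Shephard--Todd; hence $R$ is finite free of rank $|W|$ over $\bar Z_{\hat\lambda}$. The residual $W$-action on the $\fh^*$-factor satisfies $R^W = \bar Z_{\hat\lambda}$, and in characteristic zero the Reynolds operator $\tfrac{1}{|W|}\sum_{w \in W} w$ exhibits $\bar Z_{\hat\lambda}$ as a direct $\bar Z_{\hat\lambda}$-module summand of $R$. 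Consequently, for any $\bar Z_{\hat\lambda}$-module $M$, the unit map $M \to R \otimes_{\bar Z_{\hat\lambda}} M$ is a split injection, yielding the required injectivity.

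The torsion-freeness hypothesis enters not in the splitting argument but in the identification step: in general the composition $T_{-\rho\to\lambda}T_{\lambda\to-\rho}(M)$ is governed by the derived tensor product $R \otimes^{\mathbf L}_{\bar Z_{\hat\lambda}} M$, and the assumption that $M$ is torsion-free over $\bar Z_{\hat\lambda}$ guarantees that the higher $\operatorname{Tor}$-contributions vanish so that the derived and underived tensor products agree in degree zero. This is the main obstacle: the formula in \cite[Proposition~3.5]{BeGi} is originally phrased for the singular-to-singular composition $T_{\lambda\to\mu}T_{\mu\to\lambda}$, and transferring it to the regular-to-regular composition relevant here requires a careful use of biadjointness of the translation functors together with the correct tracking of completions at $\lambda$.
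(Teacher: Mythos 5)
Your proposal rests on the identification
\[
T_{-\rho\to\lambda}T_{\lambda\to-\rho}(M)\ \cong\ R\otimes_{\bar Z_{\hat\lambda}}M,
\]
with $R$ finite free over $\bar Z_{\hat\lambda}$, followed by the Reynolds-operator splitting of $\bar Z_{\hat\lambda}\hookrightarrow R$. This identification is not correct, and the gap is genuine. The formula in \cite[Proposition 3.5]{BeGi} describes the {\em singular-to-singular} composition $T_{\lambda\to\mu}T_{\mu\to\lambda}$ on $\M_{sing}$; the {\em regular-to-regular} composition $T_{-\rho\to\lambda}T_{\lambda\to-\rho}$ on $\M$ that appears here has no analogous ``tensor with a ring over the center'' description. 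A quick test: for the principal block of category $\cO$ for $\fsl_2$, taking $M=\Delta_0$ the dominant Verma, one has $T_{-\rho\to 0}T_{0\to-\rho}(\Delta_0)=P_{w_0\cdot 0}$, the big projective, with $K_0$-class $[\Delta_0]+[\Delta_{-2}]$; whereas $R\otimes_{\bar Z}\Delta_0$ would be a direct sum of copies of $\Delta_0$, with class a multiple of $[\Delta_0]$. These differ, so the proposed natural isomorphism fails. More structurally, Proposition \ref{int_prop}(b) shows $T_{-\rho\to\lambda}T_{\lambda\to-\rho}$ acts on $K^0(\M)$ as the alternating sum $\sum_w(-1)^{\ell(w)}[I_w(\cdot)]$, which genuinely permutes classes, whereas tensoring with a free $\bar Z$-module of rank $|W|$ would act by multiplication by $|W|$ (since $\bar Z_{\hat\lambda}$ acts on any finite-length object through a single central character).

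There is also an internal inconsistency: you invoke torsion-freeness to control higher $\operatorname{Tor}$, but if $R$ is free over $\bar Z_{\hat\lambda}$ (as you argue via Chevalley), then $R\otimes^{\mathbf L}_{\bar Z_{\hat\lambda}}M = R\otimes_{\bar Z_{\hat\lambda}}M$ for {\em every} $M$, so torsion-freeness would be vacuous. But the hypothesis is essential: any irreducible $L\in\M$ with $T_{\lambda\to-\rho}(L)=0$ (i.e.\ any irreducible not supported on the open orbit) gives $T_{-\rho\to\lambda}T_{\lambda\to-\rho}(L)=0$, so the unit map is not injective for such $L$. A secondary slip is the claim that Kostant--Rallis makes $\cO(\widehat{\fa^*/\Wbp})$ act on $M$: that result produces the action of $\cO(\fa^*/W_\RE)$ (i.e.\ the quotient of the center), not of the larger ring $\cO(\fa^*/\Wbp)$. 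The paper's actual argument is quite different and avoids all of this: one localizes at the fraction field $\cK_{\hat\lambda}$ of $\cO(\fa^*/W_\RE)_{\hat\lambda}$, uses torsion-freeness to embed $M\hookrightarrow M_{\cK_{\hat\lambda}}$, observes that $M_{\cK_{\hat\lambda}}$ is semisimple and that the adjunction unit on $M_{\cK_{\hat\lambda}}$ is a split monomorphism (since no generic irreducible is annihilated by translation), and concludes by exactness of translation functors and compatibility with localization.
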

\begin{proof}
We write $\cK_{\widehat \lambda}$ for the fraction field of $\cO(\fa^*/W_\RE)_{\widehat \lambda}$ and similarly $\cK_{-\widehat {\rho}}$ for the fraction field of $\cO(\fa^*/W_\RE)_{-\widehat {\rho}}$. Let us write  $\cM^{frac}$ for the category we obtain by base changing $\cM$ to  the generic point  $Spec(\cK_{\widehat \lambda})$ via the functor $N\mapsto N_{\cK_{\widehat \lambda}}=N\otimes_{\cO(\fa^*/W_\RE)_{\widehat \lambda}}\cK_{\widehat \lambda}$ and similarly we write $\cM^{frac}_{sing}$ when we apply this functor to $\cM_{sing}$.  The objects in $\cM^{frac}$ are  $U(\fg)\otimes_{Z(\fg)}\cK_{\widehat \lambda}$-modules. Furthermore, the category $\cM^{frac}$ is semisimple as it consists of Harish-Chandra modules at a generic infinitesimal character. In particular, the standard modules in $\cM$ associated to irreducible local systems on orbits attached to the maximally split Cartan become irreducible in $\cM^{frac}$ and all  the other standard modules go to zero. Using the fact that we are working at generic points, we note that the translation functor $T_{\lambda\to{-\rho}}$ from $\cM^{frac}$ to $\cM^{frac}_{sing}$ does not send any objects to zero. 

 Because $M$ is torsion free we have $M\subset M_{\cK_{\widehat \lambda}}$. The discussion above implies that
\begin{equation*}
\text{$M_{\cK_{\widehat \lambda}}$ is a direct summand of $T_{{-\rho}\to\lambda} T_{\lambda\to{-\rho}}(M_{\cK_{\widehat \lambda}})$}\,.
\end{equation*}
Now, 
\begin{equation*}
T_{{-\rho}\to\lambda} T_{\lambda\to{-\rho}}(M_{\cK_{\widehat \lambda}}) = T_{{-\rho}\to\lambda} T_{\lambda\to{-\rho}}(M)\otimes_{\cO(\fa^*/W_\RE)_{\widehat \lambda}}\cK_{\widehat \lambda}
\end{equation*}
Thus, $M\to  T_{{-\rho}\to\lambda} T_{\lambda\to{-\rho}}(M)$ is injective.
\end{proof}

Now, as $Q$ is torsion free over $\cO(\fa^*/W_\RE)_{\widehat \lambda}$ we conclude by the lemma that~\eqref{Q} is injective. Next, to show  injectivity of~\eqref{C} it suffices to show that $C$ is torsion free over $\cO(\fa^*/W_\RE)_{\widehat \lambda}$. Concretely, we have to show that for $f\in \cO(\fa^*/W_\RE)_{\widehat \lambda}$  the multiplication map $C\xrightarrow{ f\cdot}C$ is injective. By a diagram chase this reduces to showing 
that the map $Q/fQ \to T_{{-\rho}\to\lambda} T_{\lambda\to{-\rho}}(Q/fQ)$ is injective. 

 If $Q/fQ  \to T_{{-\rho}\to\lambda} T_{\lambda\to{-\rho}}(Q/fQ )$ is not injective then
its kernel is a nonzero submodule $M\subset Q/fQ $ such that $T_{\lambda\to{-\rho}}(M)=0$. Let us write $\cN$ for the nilpotent cone in $\fg$. It is a standard fact that   $T_{\lambda\to{-\rho}}(N)=0$  for $N\in\cM$ if and only if the dimension of the support variety of $N$, the Gelfand--Kirillov dimension of $N$, is less than $\dim(\BB)$ (see also
Remark \ref{unique_nonvan} below).
Since $G$ is quasi-split we have also  $\dim(\BB)=\dim(\cN\cap \fp)$. 
Thus, for the pro-object $M = \varprojlim M_n$ the Gelfand--Kirillov dimension of each $M_n$ is less than the dimension of the $K$-nilpotent cone $\cN\cap \fp$.

However, using the canonical filtration of $\cQ$ generated by $ 1\otimes V$ over $U(\fg)$, we see that $gr(\cQ) = V\otimes (Sym(\fp))$ as a $Sym(\fg)$-module. Therefore, 
\begin{equation}
gr(Q) = V\otimes (Sym(\fp)\otimes_{\cO(\fa^*/W_\RE)}{\cO(\fa^*/W_\RE)_{\widehat 0}})\,.
\end{equation}
Let us write $\fm$ for the maximal ideal of $\cO(\fa^*/W_\RE)_{0}$ and $Q_n$ for $\cQ_{\widehat \lambda}/\fm^{n+1}$. Then, using the fact that $Sym(\fp)$ is flat over $\cO(\fa^*/W_\RE)$ we see that
\begin{equation}
gr((Q/fQ)_n) = V\otimes (Sym(\fp)\otimes_{\cO(\fa^*/W_\RE)}{\cO(\fa^*/W_\RE)_{0}}/(\fm^{n+1},f))
\end{equation}
is  a Cohen-Macauley module with maximum dimensional support on the  $K$-nilpotent cone $\cN\cap \fp$.  Thus, $(Q/fQ)_n$ has no submodules of lower dimensional support. This implies that $M_n=0$ and hence $M=0$.

\subsection{Cross action and intertwining functors}\label{cross}
We will use the {\em cross action} introduced in  \cite[Definition 8.3.1]{green} on the set of irreducible objects in $\MM$,
denoted by $w:L\mapsto w\times L$ and its relation to 
{\em intertwining functors} $I_w:D^b(\MM)\to D^b(\MM)$ of \cite{BeBe}.
Recall that, as $\la$ is dominant,  the latter can be characterized by:
\begin{equation}\label{Gamma_intertw}
R\Gamma_{w\cdot \la}(I_w(M))\cong R\Gamma_\la(M),
\end{equation} 
where $\Gamma_\mu(M)$ denotes the direct summand
in $\Gamma(X,M)$ on which the abstract Cartan $\fh$ acts through the generalized 
infinitesimal character $\mu$.\footnote{If the $\fh$ action on $\Gamma(X,M)$ 
is diagonalizable then $\Gamma_\la(M)$ can also be described as $\Gamma(G/B,M_\la)$
where $M_\la$ is the corresponding sheaf of modules over the twisted
differential operators ring on $G/B$ corresponding to $\la$.}


We will also need the alternative geometric description of $I_w$.
Assume for simplicity that $w$ is a simple reflection $s_\alpha$. 
Let us write $\B_\alpha$ for the
variety of
pairs $(x',x'')$ in $\B\times \B$ in relative position $s_\alpha$, and $p,q$
for the
natural projections
\begin{equation}
\label{i1}
\CD
\B \ @<{\ p \ }<< \B_\alpha @>{\ q \ }>> \B
\endCD
\end{equation}
to the two factors. As is explained in \cite{SV} section 10 we extend~\eqref{i1} into a commutative diagram
$$
\CD
X \ @<{\ \widehat p \ }<<  Y_\alpha @>{\ \widehat q \ }>> X
\\
@VVV @VVV @VVV
\\
\B \ @<{\ p \ }<< \B_\alpha @>{\ q \ }>> \B
\endCD
$$
In this diagram $Y_\alpha \to \B_\alpha$ is also an $H$-torsor and the maps satisfy:
\begin{equation}
\label{ih}
\widehat p(h\cdot y) \ = \ h\cdot \widehat p(y) \ \ \ \text{and}\ \ \ \widehat
q(h\cdot
 y)
\ =
\ s_\alpha(h)\cdot \widehat q(y)
\end{equation}
for all $ y\in \widehat Y_\alpha$ and $h\in H$. We also note that the two outer squares in the above diagram are Cartesian. The intertwining functors $I_\alpha:D_K(\B)\to D_K(\B)$ and $I_\alpha:D_K^{H_{mon}}(X)\to D_K^{H_{mon}}(X)$ (which by abuse of notation we denote by the same symbol) are given by the formulas
\begin{equation}
I_\al = q_*p^*[1] \qquad \text{or} \qquad I_\al = \widehat q_*\widehat p^*[1]\,;
\end{equation}
here $:D_K^{H_{mon}}(X)$ stands for the category of $K$-equivariant, $H$-monodromic $D$-modules (or constructible sheaves) on $X$.
These functors are known to be equivalences satisfying the braid relations (see \cite{De}\footnote{The discussion in {\em loc. cit.} focuses on sheaves
on $G/B$ rather than monodromic sheaves on $X$ but the same proof applies to that latter case.}  where this observation
is attributed to Brou\' e and Michel \cite{BrMi}, although in some form it goes back to \cite{BeBe}), thus they extend to an action of the braid
group on the two categories; the functor $I_w$ is then the action of the canonical lift of $w$ to the braid group.

Note that $D^b(\M)$ can be realized as a full subcategory in $D_K^{H_{mon}}(X)$.
This follows once one checks that the realization functor \cite{Beil_der_per} 
$D^b(Perv_K^{H-mon}(X))\to D_K^{H_{mon}}(X)$ induces an isomorphism
$$Ext^i_{Perv_K^{H-mon}(X))}(P,Q)\to  Ext^i _{D_K^{H_{mon}}(X)}(P,Q)$$
where $P$, $Q$ run over  sets of generators of the triangulated category. 

Embedding both categories as full subcategories 
in the corresponding categories of pro-objects, we are reduced to showing the isomorphism
for $P$ in a set of generators of $\Mh$ (for various blocks $\M$). Thus we can let
$P$ run over the set of indecomposable projective objects in $\Mh$ and $Q$ run over
the set of costandard objects. In that case $Ext^i(P,Q)=0$ for $i\ne 0$ in both categories:
this holds in $D^b(Perv_K^{H-mon}(X))$ since $P$ is projective and it holds
in $D_K^{H_{mon}}(X)$ since $P$ is filtered by deformed standard
objects: this follows by an inductive construction of a projective cover
of an irreducible $L\in \MM$ parallel to the one in the proof of \cite[Theorem 3.2.1]{BGS}
(see \cite[p. 498]{BGS});
in the present context the step of induction consists in taking
the universal extension of a previously constructed object by a deformed
principal series object $\Pi_L$ instead of a standard object used in \cite{BGS}
(see \cite[\S 5.5]{BeRi1} for a similar construction of deformed tilting objects).
For $i=0$ the isomorphsim holds since $Perv_K^{H-mon}(X)$ is a full
subcategory in $D_K^{H_{mon}}(X)$.

It was pointed out before Proposition \ref{Pgen} that wall crossing functors $R_\al$
preserve the block, in view of \eqref{RtoI} it follows that so do the intertwining functors $I_\al$ and $I_w$, $w\in W$,
i.e.  $I_w$ preserves the subcategory $D^b(\M)\subset D_K^{H_{mon}}(X)$. 
It is clear from the above that the actions of intertwining operators on $D_K^{H_{mon}}(X)$ and on $D_K(\B)$ are compatible via
the pull back functor.

We are primarily interested in local systems on the orbits attached
to the maximally split Cartan. In this case the relationship between the cross action and the intertwining functors is easier to state.
In particular, the following statement characterizes the cross action on 
such local systems uniquely
in terms of the functors $I_w$. It is not used in the body of the paper, but we include it for completeness. Its proof makes use of Proposition~\ref{cross_via_int} below.

For an irreducible object $L\in \MM$ let us write $\Delta_L$ for 
 the standard cover, i.e., the !-extensions of the corresponding local system
 on a $K\times H$-orbit on $X$.
 \begin{Claim}
a)  Assume that $L$ is supported on the open orbit. Then the object
  $I_w(\Delta_L)$ is a perverse sheaf whose support coincides with that
of $w\times L$, and we have:
 $$I_w(\Delta_L)|_{O_{w\times L}}\cong (w\times L)|_{O_{w\times L}}.$$

b) If $L$ is supported on the open orbit then $w\times L$ is supported
on an orbit attached to the maximally split Cartan.
Every irreducible object in $\MM$ supported on such an orbit has
the form $w\times L$ for some $L$ supported on the open orbit.
\end{Claim}

\proof Since this statement will not be used in the rest of the paper, we only include a sketch of the proof.

Part  (a) follows from Proposition \ref{cross_via_int}(a) once we know   that  $I_w(\Delta_L)$ is a perverse sheaf.
The open $K$-orbit in $\BB_0$ is the quotient of $K$ by $K\cap T$ for a maximal torus $T\subset G$,
thus it is affine. This allows one to present $I_w(\Delta_L)$ both as a $*$ and as a $!$ direct
image under an affine morphism, which implies perversity, see, for example, \cite[\S 5.1]{BM} 

Part  (b) follows from the proof of Proposition \ref{cross_via_int}(b).
Indeed,  under our assumptions
the only situation when  the open orbit $O$ in the support of $L$ differs from 
the open orbit $O'$ in  the support of $s_\alpha\times L$ is the complex root situation 
(cases (i,ii) in the proof); in these cases
$O'$ is the only orbit in $\pi_\alpha^{-1}\pi_\alpha(O)$ different from $O$ and it is 
again attached to the maximally split Cartan, which implies the first sentence in part (b) of the Claim.
To verify the second statement observe that for every orbit $O$ except for the open one
there exists a simple root $\alpha$ such that $\dim \pi_\alpha^{-1}\pi_\alpha(O)> \dim (O)$.
As it is pointed out in the proof of  Proposition \ref{cross_via_int}(b), if $O$ belongs
to the maximally split Cartan then this can only happen in the complex root situation
analyzed in case (ii) of the proof. Thus the support of $s_\alpha\times L$ in this case will
have  larger dimension than $L$. Applying induction we find $w\in W$ such that
$w\times L$ is supported on the open orbit. \qed

We will also need some  standard properties of the intertwining functors $I_w$;
we sketch the argument as we were unable to find exact references.

\begin{Prop}\label{int_prop}
a) Let $\mu$ be such that $( \mu+\rho,\alpha) =0$ for a simple root $\al$.
Then  we have a canonical isomorphism

 $$T_{\la\to \mu}(M)
 \cong T_{\la\to\mu} (I_{s_\al}(M)).$$

b) We have an action of $W$
 on $K^0(\MM)$ given by
 $w:[M]\mapsto [I_w(M)]$. We have 
\begin{equation}\label{Trho}
K^0(T_{-\rho\to \la} T_{\la\to -\rho} )= \sum w\in \Zet[W].
\end{equation}

\end{Prop}

\proof a) follows from \eqref{Gamma_intertw} and \cite[Proposition 2.8]{BeGi}
(note that in \cite{BeGi} the requirement that the weights
are dominant is imposed but the statement remains true with the same
proof for not necessarily dominant weights if by $\Gamma_\la$, $\Gamma_\mu$
one understands the corresponding derived global sections functor).

Part (b) can also be deduced from \cite[Proposition 2.8]{BeGi}. In view of
{\em loc. cit.}, for a $D$-module $M\in \MM$ the Lie algebra
module  $T_{-\rho\to \la} T_{\la\to -\rho}(M)$ is given by
$pr_\la (V_{\la+\rho}\otimes R\Gamma_{-\rho}(M))$,
where $pr_\la$ denotes projection to the generalized infinitesimal
central character $\la$ and $V_\nu$ denotes the finite dimensional irreducible $\g$-module
with highest weight $\nu$;  notation $\Gamma_{\la}(M)$ is explained at the beginning of section \ref{cross}. We have a standard filtration on
$V_{\la+\rho}\otimes \O_{G/B}$ with associated graded $\O(\nu)\otimes V_{\la+\rho}[\nu]$; here $V_{\la+\rho}[\nu]$ stands for the $\nu$-weight space.
It induces a filtration on the sheaf $V\otimes M$, which yields an equality in
the Grothendieck group of $(\g,K)$-modules: 
$$
[V_{\la+\rho}\otimes R\Gamma_{-\rho}(M)]=\sum_\nu [V_{\la+\rho}[\nu] \otimes R\Gamma_{-\rho+\nu}(M)].
$$
Applying $pr_\la$ to the right hand side removes the terms corresponding to non-extremal
weights of $V_{\la+\rho}$, in view of  \eqref{Gamma_intertw} the resulting expression
coincides with the right hand side of the equality in \eqref{Trho}.
   \qed

For an irreducible object $L\in \MM$ let $O_L$ denote the open orbit in the support of $L$.
Recall the deformed principal series modules $\Pi_\LL$ introduced in 
Definition \ref{def_pr}. 

\begin{Prop}\label{cross_via_int}
Assume that $O_L$ is attached to the maximally split Cartan.

a) We have an equality in the Grothendieck group $K^0(\M)$:
$[I_w(\Delta_L)]=[\Delta_{w\times L}]$.

b) Let $\mu$ be such that $( \mu+\rho,\alpha) =0$ for a simple root $\al$.
Then we have 
isomorphisms:
\begin{equation}\label{TDcross}
T_{\la\to \mu}(\Delta_{s_\alpha \times L})\cong T_{\la\to \mu} (\Delta_L),\end{equation}
\begin{equation}\label{TPicross}
T_{\la\to \mu}(\Pi_{s_\alpha \times \LL})\cong T_{\la\to \mu} (\Pi_\LL).\end{equation}

Moreover, the isomorphisms \eqref{TDcross}, \eqref{TPicross} lift to isomorphisms
{\em in the quotient category} $\MM/Ker(T_{\la\to\mu})$: 
\begin{equation}\label{IsDelta}
I_{s_\al}(\Delta_{s_\alpha \times L})\cong \Delta_L,
\end{equation}
\begin{equation}\label{IsPi}
I_{s_\al} (\Pi_{s_\alpha \times \LL})\cong \Pi_\LL.
\end{equation} 
\end{Prop}

\proof 
Let us recall that in the definition of the functors $T_{\la\to -\rho}$ and $T_{\la\to \mu}$ we pass to the simply connected cover $G_{sc}$. Thus, in proving the proposition we can assume without loss of generality that $G$ is simply connected. 
Then the perverse sheaves $L$, $\Delta_L$ have trivial monodromy along the fibers of the projection $X\to \B$, so they
are pull-backs of perverse sheaves on $\B$ which we will, by abuse of notation, denote
 by the same symbols.
  In view of the compatibility
between the intertwining functors acting on $D_K(\B)$ and $D_K^{H_{mon}}(X)$ pointed out above, part (a) and \eqref{TDcross} 
follow from the corresponding statements about sheaves on $\B$. We now proceed to prove these statements 
obtaining~\eqref{TPicross} as a consequence. 

 Let $O$ denote the image of $O_L$ in $\B$.
 We will use the notation and terminology of \cite{ic3} which we recall here briefly. We first note that as we are considering orbits attached to the maximally split Cartan there are no non-compact imaginary roots. There are no compact imaginary roots either as the group $\GR$ is quasisplit. Let us now consider the projection $\pi_\alpha:\B\to G/P_\alpha$ where $P_\al$ is the parabolic corresponding to the simple root $\alpha$; we are interested in the restriction $\pi_\alpha|_O$. In the case the root $\alpha$ is complex $\pi_\alpha|_O$ is either an isomorphism onto its image or it is an $\bA^1$-fibration. If $\alpha$ is real then $\pi_\alpha|_O$ is a $\bC^*$-fibration. We say that $\alpha$ is of type I if $\pi_\alpha^{-1}\pi_\alpha(O)-O$ consists of two $K$-orbits and say that it is of type II if it consists of one $K$-orbit. We say that  $L$ (or $\Delta_L$) satisfies the parity condition, or, in the terminology of \cite{ic3}, $s_\alpha\in \tau(\Delta_L)$, if $\cL$ extends to a local system on $\pi_\alpha^{-1}\pi_\alpha(O)$. Otherwise we say that it does not satisfy the parity condition or that $s_\alpha\notin \tau(\Delta_L)$.

a) It follows from the discussion above that the simple root $\alpha$ is either real or complex.  Thus, one of the formulas (b1), (b2), (c2), (d2) or (e) in  \cite[Definition 6.4]{ic3} applies. These formulas relate the cross action of a simple
reflection $s$ to the operator $T_s$. According to \cite{LV} the operator $-T_s$
 coincides with the effect of $I_s$ on the $K$-group. In fact, \cite{ic3} is concerned with the action of these operators
on the $q$-deformed version of the $K$-group acted upon by the Hecke algebra, to pass
to our present setting one needs to specialize the variable $u$ appearing in \cite[\S 6.4]{ic3}
to $u=1$. Also note that the basis elements $\gamma$ considered in {\em loc. cit.} correspond
to classes of the form $(-1)^d[\Delta_L]$, where $d$ is the dimension of support of $L$, while the action of $W$
on the $K$-group differs from the one introduced in Proposition \ref{int_prop} by the sign twist.
Taking into account that for a real type root and $s_\alpha\not \in \tau$, i.e., when $\alpha$ does not satisfy the parity condition, one has $s\times \gamma=\gamma$, see~\cite[Proposition, 8.3.18 f)]{green}, we get the statement.

We proceed to prove b). Note that \eqref{IsDelta}, \eqref{IsPi} imply \eqref{TDcross}, \eqref{TPicross}
 by Proposition \ref{int_prop}a), so we focus on  \eqref{IsDelta}, \eqref{IsPi}.
We split the argument into the same cases as in part a). 

In the first two cases we assume  that $\alpha$ is a complex root and in the remaining two cases we assume that $\alpha$ is a real root. 

i) Let $\al$ be a complex root such that the map $\pi_\alpha|_O$ is an ${\mathbb A}^1$ fibration. Then 
it is easy to see that $I_{s_\al}(\Delta_L)\cong \Delta_{L'}$, where $L'$ is an irreducible associated to a local system on the orbit 
$O'=\pi_{\al}^{-1}\pi_\al(O)\setminus O$ (which is a locally closed codimension one 
subvariety in the closure of $O$). Part (a) shows then that $L'=s_\al\times L$,  
so \eqref{IsDelta} follows. 
It is also easy to see that $I_{s_\al}(\Pi_\LL)\cong \Pi_{\LL'}$, which yields \eqref{IsPi}.

ii) Let $\al$ be a complex root such that the map $\pi_\alpha|_O$ is one to one. In this case
$I_{s_\al}(\Delta_L)$ is supported on the closure of $O'=\pi_{\al}^{-1}\pi_\al(O)\setminus O$
(we note  that $O$ is a locally closed codimension one 
subvariety in the closure of $O'$). More precisely, $I_{s_\al}(\Delta_L)=j_!j'_*(\cL'[\dim O'])$
for a local system $\cL'$ on $O'$,
where $j':O'\to \pi_{\al}^{-1}\pi_\al(O)$ and $j:\pi_{\al}^{-1}\pi_\al(O)\to \B$ are the embeddings.
Again, part (a) shows that $L'=s_\al\times L$ where $L'$ is the irreducible associated to the local system $\cL'$. 
It is a standard fact that $I_{s_\al}^{-1}$ and $I_{s_\al}$ induce the same functor on the quotient
$D^b(\MM/Ker(T_{\la\to\mu}))$, so the statement follows from (i) by
switching the roles of $L'$ and $L$ we arrive at the situation (i).

iii) Let $\al$ be a real root and thus the map $\pi_\alpha|_O$ is a $\Ce ^*$-fibration.  We assume first that the restriction 
of  $\cL=L|_{O_L}$ to a fiber is nontrivial, i.e., $\alpha$ does not satisfy the parity condition. Then it is easy to see that 
$I_{s_\al}(\Delta_L)\cong \Delta_L$,
$I_{s_\al}(\Pi_\LL)\cong \Pi_\LL$. We can again apply part (a) to see that
$L=s_\al\times L$ which yields \eqref{IsDelta}, \eqref{IsPi}.

iv)  Let $\al$ be a real root and thus the map $\pi_\alpha|_O$ is a $\Ce ^*$-fibration.  Now we assume that the restriction 
of  the local system $\cL=L|_{O_L}$ to a fiber is trivial, i.e., that $\alpha$ satisfies the parity condition. Then the object $I_{s_\al}(\Delta_L)$ can be described as follows:  $I_{s_\al}(\Delta_L)=j'_! j_*(\cL')[\dim O]$,
where $j:O\to \pi_\al^{-1}\pi_\al(O)$ and $j':\pi_\al^{-1}\pi_\al(O)
\to \B$ are the embeddings and $\cL'$ is a local system on $O_L$ which is also trivial along the fiber.  We apply part (a) once more to conclude that $(s_\al\times L)_{O_L}=\cL'$. 
We also obtain a canonical map $I_{s_\al}(\Delta_L)\to \Delta_{s_\al\times L}$ as follows. Let us write $\bar \cL$ for the extension of $\cL'$ to a local system on $\pi_\al^{-1}\pi_\al(O)$ and let us write $\cL''= \bar\cL|_{\partial O}$ where $\partial O = \pi_\al^{-1}\pi_\al(O) -O$. Note that $\partial O$ consists of either one or two $K$-orbits depending on whether $\alpha$ is of type I or type II. In any event, we have a canonical morphism obtained as a composition
$$
 j_*(\cL')[\dim O] \to \cL''[\dim \partial O] \to  j_!(\cL')[\dim O]
$$
whose kernel and cokernel are isomorphic to $\bar \cL[\dim O]$. By applying $j'_!$ to this morphism we obtain a canonical morphism  $I_{s_\al}(\Delta_L)\to \Delta_{s_\al\times L}$ whose cokernel and kernel are $j_!\bar \cL[\dim O]$ which lies in the kernel of $T_{\la\to \mu}$. Thus we get  \eqref{IsDelta} and \eqref{IsPi} is checked
in a similar fashion.
 \qed

\subsection{The singular block: proof of Proposition \ref{singbl_prop}}
\label{singbl_sec}

We start by recalling some standard facts.

\begin{Lem}\label{exttr}
a) The extended translation functor is a Serre factorization.

b) The translation functor $T_{\la \to -\rho}$ sends irreducible objects in $\M$ to irreducible objects in $\M_{sing}$ or annihilates them. Every irreducible 
object in $\M_{sing}$ comes by translation from a unique irreducible object  in $\M$.

\end{Lem}

\proof Part (b) follows from (a) and (a) is \cite[Proposition 3.1.]{BeGi}. \qed

\begin{Lem}\label{onespecial}
 Assume that $G$ is adjoint.
 There exists exactly 
 one irreducible object $\cL_0$ in $\MM$ which does not 
go to zero under $T_{\la\to -\rho}$. 
\end{Lem}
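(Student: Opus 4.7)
The plan is to split the simples of $\MM$ into those supported on the open $K\times H$-orbit $X_0$ and those supported on smaller orbits, and treat each class separately.

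For a simple $L\in\MM$ not supported on $X_0$, I would argue $T_{\la\to-\rho}(L)=0$ by producing a simple root $\al$ with $s_\al\in\tau(L)$. Concretely, for $G$ quasi-split and $O_L$ a non-open $K\times H$-orbit, at least one simple $\al$ must have $\pi_\al|_{O_L}\colon O_L\to\pi_\al(O_L)$ either a $\Pone$-fibration (compact root) or a two-to-one cover (type I real root)---these are exactly the two cases explicitly excluded from the open-orbit analysis at the end of the proof of Proposition \ref{cross_via_int}, and dimension considerations force one of them to occur for any non-open orbit. In either case a direct verification gives $T_{\la\to\mu_\al}(L)=0$ for $\mu_\al$ a generic point of the $\al$-wall, and since $T_{\la\to-\rho}$ decomposes as $T_{\mu_\al\to-\rho}\circ T_{\la\to\mu_\al}$, the desired vanishing follows.

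For simples on $X_0$, indexed (as $G$ is adjoint, so $\Wb=\Wbp$ and $\fS=1$) by $v\in W^\theta/\Wb$, the key tool is Corollary \ref{cross_Pi} applied with $\mu=-\rho$. Since $\mu+\rho=0$ is perpendicular to every simple root, we obtain $T_{\la\to-\rho}(\Pi_{s_\al\times L_v})\cong T_{\la\to-\rho}(\Pi_{L_v})$ for every simple reflection $s_\al$. Iterating, and using that $W$ is generated by simple reflections together with the transitivity of the $W^\theta$ cross-action on $\LL_\MM(X_0)$, shows all $\Pi_{L_v}$ translate to a single common pro-object $Q := T_{\la\to-\rho}(\Pi_{L_v})$, independent of $v$. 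Existence of some $L_0$ with $T_{\la\to-\rho}(L_0)\ne 0$ is supplied by a concrete candidate, e.g.\ the finite-dimensional representation that $\MM$ contains as the Vogan-dual of the principal block $\LM$: its translation to $-\rho$ is manifestly nonzero. Uniqueness then follows by contradiction: if two distinct $v\ne v'$ both gave $T_{\la\to-\rho}(L_v), T_{\la\to-\rho}(L_{v'})\ne 0$, exactness of $T_{\la\to-\rho}$ applied to $\Pi_{L_v}\twoheadrightarrow L_v$ and $\Pi_{L_{v'}}\twoheadrightarrow L_{v'}$ would yield two non-isomorphic simple quotients of $Q$ (distinct by Lemma \ref{exttr}(b)), forcing $Q$ to be decomposable in the pro-category.

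The main obstacle is therefore to establish that $Q$ is indecomposable, equivalently that $\End(Q)$ is a local ring. I would attack this via the adjunction $\End(Q) = \Hom_\MM(\Pi_{L_v}, T^*T(\Pi_{L_v}))$, combined with the $K^0$-identity $K^0(T^*T)=\sum_{w\in W}(-1)^{\ell(w)}[I_w]$ from Proposition \ref{int_prop}(b) and Proposition \ref{cross_via_int}(a) to rewrite the classes $[I_w(\Delta)]$ as classes of standards of cross-translates. The expected outcome is that $\End(Q)$ is isomorphic to the completion of $\cO(\fa^*/\Wbp)$ at zero---morally matching Proposition \ref{singbl_prop}---and hence local. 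One must extract this locality directly from the combinatorics and the cross-action symmetry, since in the logical ordering of Section \ref{sect2} the geometric identification of Proposition \ref{singbl_prop} is only available afterward.
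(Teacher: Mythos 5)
Your proposal takes a genuinely different route from the paper, and the paper's route is much shorter: the paper simply invokes Vogan's character--multiplicity duality (Theorem 1.15 of \cite{ic4}), under which the irreducibles of $\M$ biject with those of $\LM$ and the property of surviving translation to $-\rho$ corresponds to being finite-dimensional; since $G$ is adjoint, $\LG$ is simply connected, hence $\LK$ is connected, and a block of $(\Lg,\LK)$-modules contains at most one finite-dimensional object. Existence comes for free because $\LM$ is a principal block. Your attempt, by contrast, tries to prove the statement internally to the $\M$-side using the cross-action, intertwining functors and deformed principal series, which would be a more self-contained (and arguably more illuminating) argument if it worked; but as written it has both a gap and an error.

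The gap you yourself flag is fatal in the present logical order: the whole argument reduces to showing the common translate $Q=T_{\la\to-\rho}(\Pi_{L_v})$ is indecomposable, i.e.\ $\End(Q)$ is local, and the only identification of $\End(Q)$ available is the completion of $\cO(\a^*/\Wbp)$ at zero. But that identification is precisely Proposition \ref{singbl_prop}, whose proof (via Lemma \ref{86}) \emph{uses} the uniqueness of $L_0$ as an input, so appealing to it here is circular. An independent derivation of the locality of $\End(Q)$ from Propositions \ref{int_prop} and \ref{cross_via_int} alone is not sketched and does not look routine; you would in effect be re-proving the hard part of Lemma \ref{86} without its key input. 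There is also a concrete error in the existence step: $\MM$ does \emph{not} contain a finite-dimensional $G$-representation --- that is the defining property of $\LM$ (the principal block on the dual side), not of $\MM$. Existence should instead be argued from Lemma \ref{exttr}(a): $T_{\la\to-\rho}$ is a Serre quotient functor with nonzero target, so some irreducible survives. Finally, the claim in your first paragraph that for every non-open orbit there is a simple root with $\pi_\al$ restricting to a $\Pone$-fibration or a two-to-one cover is asserted, not proven; the cited dichotomy at the end of Proposition \ref{cross_via_int} establishes only what happens on the \emph{open} orbit and does not give a classification of what can occur on smaller ones. The correct general statement (survival under $T_{\la\to -\rho}$ is equivalent to maximal Gelfand--Kirillov dimension, which in turn forces support in the open orbit for a quasi-split group) is a theorem of Chang and Vogan used elsewhere in the paper; you would need to cite it rather than rederive it by the sketch given.
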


\proof We deduce the Lemma from the main result of \cite[Theorem 1.15]{ic4}.

We claim  that the bijection between irreducible objects  in $\M$ and $\LM$
constructed in {\em loc. cit.} sends a module satisfying the property in the statement of the Lemma into a finite
dimensional module. To see this recall that the corresponding duality 
between the Grothendieck groups is compatible the usual $W$-actions up to twisting with the sign
character \cite[Proposition 17.16]{green_book} or \cite[Proposition 13.10]{ic4} (see Corollary \ref{equiv}(iii) for a categorification of that
compatibility). 
 Now the claim follows since the space of  sign invariants of $W$ in $K^0(\LM)^W$ is generated by the
classes of finite dimensional modules in $\widecheck\M$, while the space $K^0(\M)_{W,sgn}$ of  coinvariants in $K^0(\M)$
is isomorphic to the image of $K^0(\M)$ under the map induced by the functor $T_{\la\to -\rho}$: here the first first 
statement is standard and the second one follows from Proposition \ref{int_prop}(b)
(here we refer to the action of $W$ as in Proposition \ref{int_prop}, as pointed out in the proof of Proposition \ref{cross_via_int} 
it differs by tensoring with the sign character
from the one considered in \cite{ic4} and other sources).
It is clear that a block in the category of $(\Lg,\LK)$-modules
can not contain more than one finite dimensional module as $\check G$ is simply connected and therefore $\LK$ is connected,
so the isomorphism class of a $(\Lg,\LK)$-module is uniquely determined by its isomorphism class as a $\Lg$-module. Thus the Lemma follows.
\epf

\begin{rmk}\label{unique_nonvan} Another, more elementary result of Vogan  \cite[Theorem 6.2]{Vogan_max}
provides a direct classification for irreducible modules of maximal Gelfand-Kirillov dimension.
It is a standard fact that for $G$ quasi-split this condition is equivalent to nonvanishing of the image of the module
under $T_{\la\to -\rho}$: one way to see it is by using
Proposition \ref{int_prop}(b) and the fact that for $w\in W$, $M\in \MM$
the virtual module $M-w(M)$ has Gelfand-Kirillov dimension less than $\dim(\BB)$. 

It would be interesting to deduce the Lemma directly from that classification.
\end{rmk}

{\sf Until section \ref{nonadj} we assume that
$G$ is adjoint.}

\medskip

Lemmas \ref{exttr}, \ref{onespecial} show that there exists exactly one irreducible object in $\MM_{sing}$. 
Let $P_{sing}\in \Mh_{sing}$ be its projective cover. 
To prove Proposition \ref{singbl_prop} in the present case it suffices
to construct an isomorphism $End(P_{sing})\cong \hatt{\O(\fa^*)}^{\Wb}$.

Recall the deformed principal series module $\Pi_\cL$, see Definition ~\ref{def_pr}, which we will be denoting by $\Pi_{\LL,\widehat{{\la}}}$ to emphasize the infinitesimal character. We define the corresponding deformed principal series module $\Pi_{\LL,\widehat{-\rho}}\in \MM_{sing}$ by the formula  $\Pi_{\LL,\widehat{-\rho}}=T_{\la\to -\rho}(\Pi_{\LL,\widehat{\la}})$. We make use of deformed principal series modules in the context when $L$ is an irreducible in the block supported on the open orbit $X_0$ and $\cL$ is the corresponding local system. In this case the Lie algebra of  $A_L$  (notation introduced prior
to Definition ~\ref{def_pr}) is $\fa$. Therefore the commutative ring $ \hatt{\O(\a^*)}$ acts naturally on $\EE$ and hence
it acts on $\Pi_{\LL,\widehat{-\rho}}$. The following key statement implies Proposition \ref{singbl_prop}.

\begin{Lem}\label{86} 
a) The action of  $ \hatt{\O(\a^*)}$ on  $Hom_{\MM_{sing}}(\Pi_{\LL,\widehat{-\rho}}, P_{sing})$
makes it a free rank one module. Here  $ \hatt{\O(\a^*)}$ 
acts on  $Hom_{\MM_{sing}}(\Pi_{\LL,\widehat{-\rho}}, P_{sing})$
via its action on $\Pi_{\LL,\widehat{-\rho}}$.

b) We have $\Pi_{\LL,\widehat{-\rho}}\cong P_{sing}^{|\Wb|}.$

c) The group $\Wb(\cL)=Stab_{W^\theta}(\cL)$ acts on $\Pi_{\LL,\widehat{-\rho}}$, so that the natural 
$ \hatt{\O(\a^*)}$ module structure on $\Pi_{\LL,\widehat{-\rho}}$
 is equivariant with respect to $\Wb(\cL)$.
\end{Lem}

\proof 
Let $Q$ denote the projective cover of the irreducible $\cL_0\in \MM$ (introduced in Lemma \ref{onespecial}). 
Making use of the identities $\Pi_{\LL,\widehat{-\rho}}=T_{\la\to -\rho}(\Pi_{\LL,\widehat{\la}})$, $T_{-\rho\to \la}(P_{sing})=Q$ we see that
$$
Hom_{\MM_{sing}}(\Pi_{\LL,\widehat{-\rho}}, P_{sing})= Hom_{\MM_{sing}}(T_{\la\to -\rho}\Pi_{\LL,\widehat{\lambda}}, P_{sing})=Hom_{\MM}(\Pi_{\LL,\widehat{\lambda}}, Q)\,.
$$
Writing $j:X_0 \to X$ for the embedding we conclude that 
$$
Hom_{\MM_{sing}}(\Pi_{\LL,\widehat{-\rho}}, P_{sing})=Hom_{D(X_0)}(\cL\otimes \cE, j^*Q)\,.
$$
 
It follows from   \cite{chang}, \cite{Vogan_max} that for  an irreducible local system $\cL$  on $X_0$ belonging to the block $\cM$ the dual principal series module $j_*(\cL)$ contains $\cL_0$ in its Jordan-Hoelder series with multiplicity one.
  On the other hand, for such an $\cL$ we have: 
  $$Ext^{>0}(j^*(Q),\cL)=Ext^{>0}(Q,j_*(\cL))=0,$$
  which implies that $j^*(Q)$ is projective. Thus $j^*(Q)\cong \oplus \cL_i\otimes \EE^{d_i}$,
  for some $d_i\in \Zet_{\geq 0}$, where $\cL_i$ runs over the set of local systems on $X_0$
  belonging to the block. Now we have 
  $$d_i=\dim Hom (j^*Q,\cL_i)= \dim Hom (Q,j_*(\cL_i))= [j_*(\cL_i):\cL_0]=1.$$
  and hence $j^*(Q)= \oplus \cL_i\otimes \EE$. Putting things together we conclude that
  $$
Hom_{\MM_{sing}}(\Pi_{\LL,\widehat{-\rho}}, P_{sing})=End(\cE)=\hatt{\O(\a^*)}\,.
$$
This proves (a).

Because of exactness and adjointness of translation functors they send
projective (pro)objects to projective ones. Since $\Pi_{\LL,\hatt{-\rho}}=T_{\la\to -\rho}(\Pi_{\LL,\hatt{\la}})$
and $\Pi_{\LL,\hatt{\la}}$ is projective, it follows that $\Pi_{\LL,\hatt{-\rho}}$
is projective. Since $\MM_{sing}$ has a unique irreducible object with projective cover $P_{sing}$,
we see that   $\Pi_{\LL,\hatt{-\rho}}\cong P_{\! sing}^{\oplus d}$ for some $d$. 

We have 
\begin{equation}\label{d}
\begin{gathered}
d=\dim Hom (T_{\la\to -\rho}(\Pi_{\LL,\widehat{\la}}), 
T_{\la\to -\rho}(\cL_0))=
\\
\dim Hom (\Pi_{\LL,\widehat{\la}}, T_{-\rho\to \la} T_{\la\to -\rho}(\cL_0) )
= [T_{-\rho\to \la} T_{\la\to -\rho}(\cL_0) : j_{!*}(\cL[\dim X]) ]
\\
=[ T_{-\rho\to \la} T_{\la\to -\rho}(j_! (\cL[\dim X])): j_{!*}(\cL[\dim X]) ],
\end{gathered}
\end{equation}
where the last equality follows from the isomorphism $T_{\la\to -\rho}(\cL_0)\cong
T_{\la\to -\rho}(j_! (\cL[\dim X]))$, which is a consequence of the result of \cite{chang},
\cite{Vogan_max} mentioned above.

By Proposition \ref{int_prop}(b) and Proposition \ref{cross_via_int}(a)
we have an equality in the Grothendieck group $K^0(\MM)$:
 $$[T_{-\rho\to \la} T_{\la\to -\rho}(j_! (\cL[\dim X]))
 )]=\sum M_i,$$ 
 where $M_i$ runs over the set of principal series modules coming from orbits
 attached to the maximally split Cartan, each one appearing with multiplicity 
 $|\Wb|$. (Recall the running assumption that $G$ is adjoint, so $\Wb = \Wbp$).
 Since the irreducible object
 $j_{!*}(\cL[\dim X])$ appears once in the Jordan-Hoelder series of  a principal
 series module coming from the open orbit and does not appear in the 
 other principal series modules, formula \eqref{d} shows that $d=|\Wb|$, which yields
statement (b).
 
 It remains to check (c). First note that  equation \eqref{ih} implies that $I_w$ induces  the automorphism $w$ on
   $\fh\subset End(Id_\MM)$. 
Using the isomorphism \eqref{IsPi} in Proposition \ref{cross_via_int} we see that for $w\in W^\theta$ the isomorphism 
 \eqref{TPicross} is compatible with the  $\hatt{\O(\fa^*)}$-action via the
action of $W^\theta$ on $\a^*$. 

We fix a minimal $K$-type $\psi$ of $T_{\la\to -\rho} (\Delta_L)$. The space $Hom_K(\psi, T_{\la\to -\rho} (\Delta_L))$
is one dimensional by \cite[Section 7]{branching}; for original reference, see \cite{Vogan thesis}.  It is also clear that $Hom_K(\psi, T_{\la\to -\rho} (\Pi_{\LL,\hatt\la}))\cong Hom_K(\psi, T_{\la\to -\rho} (\Delta_L))
\otimes _\Ce \hatt{\O(\fa^*)}$. Thus we see that for $w\in \Wb$ there exists a unique invertible element $f_w$ in  $\hatt{\O(\fa^*)}$
such that composing the isomorphism constructed in \eqref{TPicross} with the action of $f_w$ we obtain
an automorphism of $Hom_K(\psi, T_{\la\to -\rho} (\Pi_{\LL,\hatt\la}))$ such that it restricts  to identity on $Hom_K(\psi, T_{\la\to -\rho} (\Delta_L))$ and to the natural action of $w$ on  $\hatt{\O(\fa^*)}$. It is clear that these automorphisms
produce an action of $\Wb(\cL)$ on $Hom_K(\psi, T_{\la\to -\rho} (\Pi_{\LL,\hatt\la}))$.  Since the automorphisms are compatible with the $\g$-action and the minimal $K$-type generates $T_{\la\to -\rho} (\Pi_{\LL,\hatt\la})$
 we obtain an action of $\Wb(\cL)$ on $T_{\la\to -\rho} (\Pi_{\LL,\hatt\la})$.
\qed

{\em Lemma \ref{86} implies Proposition \ref{singbl_prop}.}
It is clear from Lemma \ref{86}(b) that $\hatt{\O(\a^*)}\cong Hom_{\MM_{sing}}(\Pi_{\LL,\widehat{-\rho}}, P_{sing})$
is an $End(P_{sing})-End(\Pi_{\LL,\widehat{-\rho}})$  bimodule satisfying the second commutant property;
by this we mean that each of the two rings acts faithfully and coincides with the  commutant
of the other ring. Setting $E_1=End(P_{sing})$, $E_2=End(\Pi_{\LL,\widehat{-\rho}})^{op}$, $M=Hom_{\MM_{sing}}(\Pi_{\LL,\widehat{-\rho}}, P_{sing})$
the second commutant property says in particular that $E_1=End_{E_2}(M)$. 
The commutative ring $Z=\hatt{\O(\a^*)}^{W^\theta}$ maps injectively to the centers of $E_1$ and $E_2$.
This map arises from the action of the center of the enveloping algebra. Thus, $E_1$ and $E_2$ can be viewed
as subalgebras in $A=End_Z(M)$.

We know that $E_2\supset \hatt{\O(\a^*)}$ and $M$ is a free rank one module over $\hatt{\O(\a^*)}$;
also by Lemma \ref{86}(c) $\Wb(\cL)$ maps to $E_2$ compatibly with the map from $\hatt{\O(\a^*)}$.
Thus $E_1\imbed \hatt{\O(\a^*)}^{\Wb}$. 
It remains to check that this embedding is actually an isomorphism.
In view of the second commutant property, we have 
$$E_1=(E_1\otimes Frac(\hatt{\O(\a^*)}^{W^\theta})) \cap A,$$
where $Frac$ stands for the fraction field and 
the intersection is taken in $A\otimes Frac(\hatt{\O(\a^*)}^{W^\theta})$.
Thus it suffices to check that the embedding $E_1\imbed \hatt{\O(\a^*)}^{\Wb(\cL)}$
 becomes
an isomorphism after base change to $Frac(\hatt{\O(\a^*)}^{W^\theta})$.

The localization $End(P)_{loc}$ is a subextension of the finite field
extension

\noindent $Frac(\hatt{\O(\a^*)}^{\Wb(\cL)})/Frac(\hatt{\O(\a^*)}^{W^\theta})$. If it was a proper subextension we
would have $$\dim_{End(P_{sing})_{loc}}(Hom(\Pi_{\LL,\widehat{-\rho}}, P_{sing})_{loc})
> \dim _{Frac(\hatt{\O(\a^*)}^{\Wb(\cL)})}  (Frac (\hatt{\O(\a^*)}))= |\Wb|,$$
while Lemma \ref{86}(b) shows that $\rank_{End(P_{sing})_{loc}}(Hom(\Pi_{\LL,\widehat{-\rho}}, P_{sing})_{loc})
=|\Wb|$. \qed

\subsubsection{Proof of Proposition \ref{Pgen}(c)}
For $v\in W^\theta/\Wb$ consider the space 
\begin{equation}
\begin{gathered}
V=Hom_{\widetilde\cM_{sing}}(\widetilde U(\fg)\otimes _{U(\fg)}P_{sing}, \widetilde T(\Pi_{\LL_v,\hatt\lambda}) )
\\
=Hom_{\cM_{sing}}(P_{sing}, T_{\la\to -\rho}(\Pi_{\LL_v,\hatt\lambda}) ).
\end{gathered}
\end{equation}
 It  carries an action 
 of $End(\Pi_{\LL_v,\hatt\lambda})\cong \hatt{\O(\fa^*)}$.
 
 We claim that this action makes $V$ a free rank 1 module over $ \hatt{\O(\fa^*)}$. To 
 see this recall that $V'=Hom_{\cM_{sing}}( T_{\la\to -\rho}(\Pi_{\LL_v,\lambda}),P_{sing})$ is free rank one
 over $ \hatt{\O(\fa^*)}$ by  Lemma \ref{86}(a); in view of Proposition \ref{singbl_prop} and basic
 properties of duality for projective modules over  Cohen-Macaulay rings we have:
$$V=Hom_{\hatt{\O(\fa^*)}^{\Wb(\cL_v)}} (V',\hatt{\O(\fa^*)}^{\Wb(\cL_v)})=Hom_{\hatt{\O(\fa^*)}} (V',\hatt{\O(\fa^*)}),$$
 which shows that $V$ is also free of rank one.

It remains to check that the left action of $ \hatt{\O(\fa^*)}^{\Wb}\cong End(P_{sing})$ and the right action of $ \hatt{\O(\fa^*)}$ on $V$
agree up to the action of $\tilde v\in v\Wb$. 
When $\tilde v=1$ this follows from the construction of the isomorphism $End(P_{sing})\cong \hatt{\O(\fa^*)}^{\Wb}$.
The general case then follows by Lemma \ref{86}(c).
  \qed

\subsection{Non-adjoint groups.}\label{nonadj} We now complete the  proof of Proposition \ref{singbl_prop} and Proposition \ref{Pgen}(c)
by reducing the general case to the case when the group $G$ is adjoint. Recall that the category $\M_{sing}$ consists of $(\fg,K_{sc})$-modules that we obtained by first viewing $\cM$ as $(\fg,K_{sc})$-modules and then translating them to infinitesimal character $-\rho$. We can proceed in the same way for the category $\M^{ad}$. Recall, however, as explained in subsection~\ref{matching}, that the category $\M^{ad}$ has infinitesimal character $\la+\mu$. We  can compose the translation of $\cM$ to $-\rho$ as  $T_{\la\to -\rho} = T_{\la+\mu\to -\rho}\circ T_{\la\to \la+\mu}$. Therefore the categories $\M_{sing}$ and $\M_{sing}^{ad} $ are related in the following manner.  We have a restriction functor 
\begin{equation*}
Res^{(K_{ad})_{sc}}_{K_{sc}} : \M_{sing}^{ad} \to \M_{sing}  \ \text{having an exact bi-adjoint} \ Ind^{(K_{ad})_{sc}}_{K_{sc}} 
\end{equation*}

Our goal is to construct a commutative diagram
$$\begin{CD}
\M_{sing}^{ad}   @>{\cong}>>    \Coh_0({\a^*}/W_\MM')\\
@V{Res^{(K_{ad})_{sc}}_{K_{sc}} }VV         @VV{Av_\fS}V \\
\M_{sing}    @>{\cong}>>   \Coh^\fS_0({\a^*}/W_\MM')
\end{CD}$$
where $Av_\fS$ is the induction functor (adjoint to the forgetful functor $\Coh^{\fS}_0({\a^*}/W_\MM')\to \Coh_0({\a^*}/W_\MM')$).
We have already constructed the equivalence in the top row. Each of the vertical arrows has
an exact bi-adjoint and both categories on the bottom row can be described as the category of modules
over the corresponding monad acting on the corresponding category in the top row.
Thus we need only to check that the equivalence in the top row is compatible with the above
monads. This reduces to the following. 

\begin{Lem}
The equivalence $\M_{sing}^{ad}  \cong  \Coh_0({\a^*}/W_\MM')$ is naturally compatible with the action of
the group $\fS$ on the two categories. Here $\fS$ acts on the left hand
side by twisting a module with a character $\chi\in \fS\cong (\widetilde K_{ad}/K)^*$, while  the action on the right hand side comes from the action of $\fS\cong \Wb/\Wbp$ on $\fB$.
\end{Lem}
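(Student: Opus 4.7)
The strategy is to verify the compatibility on a generating family of objects where both the equivalence and the two $\fS$-actions admit explicit descriptions, and then extend by functoriality. First I would recall that the irreducible components $\fB_{\tilde v}$ of $\fB$ are indexed by $\tilde v \in W^\theta/\Wbp$, and the geometric $\fS = \Wb/\Wbp$-action on $\a^*/\Wbp$ (hence on $\fB$) simply permutes these components by right multiplication on $W^\theta/\Wbp$. On the module side, by Proposition~\ref{Pgen}(c) applied in the adjoint setting, the deformed principal series $\Pi_{\LL^{ad}_{\tilde v},\hat{-\rho}}$ corresponds under the equivalence to $\hatt{\O_{\fB_{\tilde v}}}$, and such objects generate $\M^{ad}_{sing}$ under wall-crossing functors together with direct sums and summands (Proposition~\ref{Pgen}(a)).

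Next, I would describe both $\fS$-actions as the \emph{same} permutation of this generating set. Twisting by a character in $\fS$ sends $\LL^{ad}_{\tilde v}$ to another $\tilde K_{ad}$-equivariant local system in $\LL_\MM^{ad}(X_0)$ via the free $\fS_{\mathrm{mon}}$-action recorded in subsection~\ref{quasisplit}. The cross-action of $W^\theta$ on $\LL_\MM^{ad}(X_0)$ is transitive with stabilizer $\Wbp$, giving $\LL_\MM^{ad}(X_0)$ the structure of a $W^\theta/\Wbp$-torsor on which $\fS = \Wb/\Wbp$ acts by right multiplication. Thus under both $\fS$-actions the generator indexed by $\tilde v$ is sent to the one indexed by $s\cdot\tilde v$, and the matching of these two rules is precisely the content of Theorem~\ref{thm_match}(b).

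The third step is to upgrade the set-theoretic agreement to an isomorphism of autoequivalences. Here Corollary~\ref{cross_Pi} is decisive: for $w \in \Wb$ representing $s \in \fS$ it provides a canonical isomorphism $T_{\la \to -\rho}(\Pi_{\LL^{ad}_{\tilde v},\hat{\la}}) \cong T_{\la \to -\rho}(\Pi_{w \times \LL^{ad}_{\tilde v},\hat{\la}})$ intertwining the $\hatt{\O(\a^*)}$-actions up to the natural $w$-twist. Combined with the faithfulness of $\Tt$ on the category of projective pro-objects (Proposition~\ref{translff_prop}) and the explicit $\hatt{\O(\a^*)}$-module description of $\mathrm{Hom}$-spaces supplied by Lemma~\ref{86}, this promotes the matching of generators to a natural isomorphism of the two composite autoequivalences $s_* : \Coh(\fB_0) \to \Coh(\fB_0)$ coming from the two sides.

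The main obstacle I anticipate is coherence: having matched the generators and their permutations, one must verify that the two categorical $\fS$-actions are isomorphic as \emph{group} actions, not merely as individual autoequivalences — that is, the isomorphisms $s_* \circ t_* \cong (st)_*$ on either side must be identified compatibly. This will reduce, via Corollary~\ref{cross_Pi} and Lemma~\ref{86}(c), to the associativity of the cross-action of $W^\theta$ on the torsor $\LL_\MM^{ad}(X_0)$ together with the compatibility of Corollary~\ref{cross_Pi} with composition of simple reflections, both of which are standard but require careful bookkeeping.
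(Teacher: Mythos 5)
Your approach diverges from the paper's and, while it has the right spirit of comparing the two $\fS$-actions through the set $\LL_\MM^{ad}(X_0)$, it runs into two substantive problems.

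The paper's actual proof is much shorter and avoids categorical coherence entirely by working directly on the formal scheme. Since the equivalence $\M_{sing}^{ad}\cong\Coh(\fB_0)$ is built from a ring isomorphism (endomorphisms of a projective generator identified with $\hatt{\O(\a^*)}^{\Wbp}$), both $\fS$-actions on the two categories are induced by actions of $\fS$ on the formal completion $\hatt{\a^*}/\Wbp=\hatt{\LL_\MM^{ad}(X_0)}/W^\theta$ itself. The paper then invokes the elementary fact that \emph{two commuting actions of a finite abelian group on an algebraic variety (or its formal completion) with the same quotient coincide}. Both $\fS$-actions here are induced from commuting actions (with the $W^\theta$-action) on $\hatt{\LL_\MM^{ad}(X_0)}$, and both have quotient $\hatt{\a^*}/\Wb=\hatt{\LL_\MM(X_0)}/W^\theta$, so they agree; that both actions are by ring automorphisms of $\hatt{\O(\a^*/\Wbp)}$ also renders the ``coherence as a group action'' automatic, because two homomorphisms $\fS\to \mathrm{Aut}(\hatt{\O(\a^*/\Wbp)})$ that agree pointwise agree as homomorphisms.

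Against this, your proposal has a genuine gap. First, the claim that the matching of the twisting action with right multiplication by $\Wb/\Wbp$ is ``precisely the content of Theorem~\ref{thm_match}(b)'' is a mis-citation: that theorem compares the pair $W(\LK^0)\subset W(\LK)$ on the equivariant side with $\Wbp\subset\Wb$ on the monodromic side across Vogan duality. The present lemma lives entirely on the quasi-split (monodromic) side, and the identification $\fS\cong\Wb/\Wbp$ comes from the exact sequence~\eqref{Wbprime}; that the twisting action on $\LL_\MM^{ad}(X_0)$ is then literally right multiplication by $\Wb/\Wbp$ on $W^\theta/\Wbp$ is the nontrivial step — it is exactly what the commuting-abelian-actions observation certifies, and you have not supplied a substitute for it. Second, the coherence issue you flag at the end is not a matter of ``careful bookkeeping'' with Corollary~\ref{cross_Pi}: matching the effect of each $s\in\fS$ on a generating family up to (unspecified) isomorphism does not yield an isomorphism of $\fS$-actions without controlling how the chosen isomorphisms compose. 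Resolving this via associativity of the cross action and compatibility of Corollary~\ref{cross_Pi} with products of reflections would require a genuinely new argument that your sketch does not provide. The paper's formulation at the level of the base variety is what dissolves this difficulty.
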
 

\proof 
We return to the situation of \S \ref{quasisplit}. We let $\hatt{\LL_\MM(X_0)}$,
$\hatt{\LL_\MM^{ad}(X_0)}$ denote the 
formal neighborhood of $\LL_\MM(X_0)$ (respectively, $\LL_\MM^{ad}(X_0)$) 
 in the space of $K$ (resp. $\widetilde K_{ad}$) equivariant $T$-monodromic local 
 systems\footnote{In more classical terms this can be described as the set of characters
 of the real torus $T_\RE$.}
  on $X_0$. 

As pointed out in Remark \ref{Bmoncan}, we have an identification
 \begin{equation} \label{comp}
 \hatt{\a^*}/W_\MM'=\hatt{ \LL_\MM^{ad}(X_0)}/W^\theta
 \end{equation}
 coming from the choice of a base point $\LL_0^{ad}$ in $\hatt{\LL_\MM^{ad}(X_0)}$ stabilized
 by $W_\MM'$.
 
 
  Given $w\in W_\MM$ let $\sigma$ be its image 
 under the surjection $W_\MM\to \fS$, see  \eqref{Wbprime}. Then 
 the automorphism $ L\mapsto w_\sigma^{-1}(L\otimes
 \sigma)$ of the set of local systems preserves the base point $\LL_0^{ad}$ and induces the natural
 action of $w$ on  $\hatt{\a^*}$ identified with the formal neighborhood of $\{ \LL_0^{ad}\}$.
  On the other hand, it induces the twisting action of $\sigma$ on the right hand side of
 \eqref{comp}. This shows the desired compatibility.
   \qed 

\bigskip

\subsection{The principal block for split groups}
In this subsection we sketch an alternative approach to the proof of Proposition
\ref{singbl_sec} in the special case when the group $G_\RE$ is split and the block
under consideration is the principal block. This material is not used in the rest of the paper, so 
the details are omitted.

In the case when the group is split and we consider the principal block, there is a concrete description of indecomposable projective modules $P$ at the singular infinitesimal central
character; this also yields a description of the projective cover of a {\it special}
 irreducible module at  regular infinitesimal central character, namely it is isomorphic
 to $T_{-\rho\to \la}(P)$. Here by a special irreducible module we mean an irreducible module
 which survives translation to $-\rho$. Such irreducible modules in the principal block correspond to constant sheaves on special closed orbits; we call a closed $K$-orbit $O$ on $\BB$ {\it special}  if for any simple root  $\al$ the dimension of the image of $O$ in the partial flag variety
 $G/P_\al$ equals $\dim O$. 
Another characterization of special orbits is as follows.  
 Let us write $\pi:T^*\BB \to \BB$ for the projection and $\mu:T^*\BB \to \cN$ for the moment map. The special orbits are precisely the closed $K$-orbits such that the image $\mu(T^*_O \BB)$ of the conormal bundle $T^*_O \BB$ meets the regular locus $\cN_{reg}$ of $\cN$. Conversely, if we write $\fg=\fk\oplus\fp$ for the Cartan decomposition then the union of special closed orbits is the closure of  $\pi(\mu^{-1}(\fp\cap\cN_{reg}))$.

To simplify the discussion, let us assume that $K$ is connected. Given a special orbit $O_s$ we write $i_s:O_s=K/B_K\imbed G/B=\BB$, we let $L_s$ denote the irreducible module associated to the trivial local system on $O_s$. To this situation we can associate a weight $\rho_s=i_s^*(\rho_G)-2\rho_K$ of $K$ and consequently an irreducible $K$-representation $V_s$. We have 
\begin{equation*}
\text{ a) $\rho_s$ is a dominant weight of $K$.}
\end{equation*}
and
\begin{equation*}
\begin{gathered}
\text{b) If $s'\ne s$ then $\rho_{s'}$ is not of the form $\rho_s+i_s^*(\la^+)$}
\\
\text{where $\la^+$ is a sum of positive roots for $G$.}
\end{gathered}
\end{equation*}

The representation $V_s$ has the following properties:

\begin{enumerate}
\item The multiplicity of $V_s$  in the irreducible module
$T_{\la\to -\rho}(L_s)$ equals one,
 its multiplicity in any other 
irreducible $(\g,K)$-module with infinitesimal singular central character $-\rho$
equals zero.
\item The representation $V_s$ is {\em fine} in the sense of \cite{mod}, i.e., its restriction
to the group $C[2]$ of order two elements in the maximal torus $C\subset K$ is a direct sum of
distinct characters and the group $W_K$ permutes these characters transitively. 
\end{enumerate}
Thus, we conclude:
\begin{prop}
The projective cover of the irreducible module $T_{\la\to -\rho }(L_s)$
 is given by $P_s= (U(\fg)\otimes_{U(\fk)}V_s)_{-\widehat\rho}$, where the subscript denotes completion 
 at the infinitesimal central character $(-\rho)$. 
\end{prop}

This provides an alternative way to carry out  one of the steps of the proof of Proposition \ref{singbl_sec}.
More precisely, we can deduce from the above description of $P_s$ that
$End(P_s)$ is a free module over the completion of $\O(\a/W_\RE)=\O(\h/W)$ 
(recall that $G$ is assumed to be split) 
of rank $|W/W_b|$. To this end observe that 
$End(U(\fg)\otimes_{U(\fk)}V_s)=Hom_K(V_s,U(\fg)\otimes_{U(\fk)}V_s)$.
The right hand side admits a filtration whose associated graded is
$Hom_K(V_s, \O(\p)\otimes V_s)$, thus it suffices to see that the latter space
is a free module over $\O(\t/W)$ of rank $|W/W_b|=\dim(V_s)$. 

The  space $Hom_K(V_s, \O(\p)\otimes V_s)$ can also be thought of as $End_{\Coh^K(\p)}(\O(\p)\otimes V_s)$.
Recall that $\p=K\cdot\a$, which shows that  $End_{\Coh^K(\p)}(\O(\p)\otimes V_s)$ embeds into
 $End_{\Coh(\a)}(\O(\a)\otimes V_s)$; furthermore, the image is contained
 in the space of endomorphisms $E$ whose action on the fiber at a point $x\in \a$
 commutes with the action of the centralizer $Z_K(x)$. It is not hard to check that
 the image is in fact equal to that space; moreover, it suffices to check the commutation with the  centralizer
for regular elements $x\in \a$. For such an element
 the stabilizes is identified with $\fS$, and $V_s$ splits as a sum of distinct characters
 of $\fS$ due to $V_s$ being fine. This shows that the generic
 rank of $End_{\Coh^K(\p)}(\O(\p)\otimes V_s)$ 
  as an $\O(\t/W)$-module equals $\dim(V_s)=|W/W_b|$, which implies the desired property
  of $End(P_s)$.

\section{The main result}\label{sect4}
In this section we put the descriptions in sections \ref{sect3} and \ref{sect2} together
to get a comparison between the two categories $\M$ and $\LM$.

\subsection{An equivalence of graded versions}
For a graded algebra $A$ let $A^\bu_{dg}$ denote the corresponding differential
graded algebra with zero differential; for a DG-algebra $A^\bu$ let  $DG-mod(A^\bu)$ denote
the subcategory of perfect complexes in the derived category of differential graded modules
over $A^\bu$.

\begin{Thm}\label{1}
a) There exists a graded algebra $A=\oplus A^d$  together with 

i) an equivalence $\M\cong A-mod^{nil}$, where $A-mod^{nil}$ is the category of 
finite length nilpotent $A$-modules.

ii) an equivalence $\LD\cong DG-mod(A^\bu_{dg})$ sending irreducible perverse sheaves
to direct summands of the free module.

\medskip

b) The algebra $A$ has the following properties. 

The graded components $A^d$ are finite dimensional, $A^d=0$ for $d<0$ and  $A^0$ is a semisimple algebra.

The center of $A$ contains $\O(\fB)^\fS$, the ring of $\fS$ invariant functions on the block variety.
The embedding $\O(\fB)^\fS\subset A$ is compatible with grading, where the grading
on $\O(\fB)$ is such that linear functions on $\fh$, $\fa$ have degree $2$.
\end{Thm}

 Set $\check L=\oplus \check L_i$, where  the $\check L_i$ run over the set of isomorphism classes of irreducible perverse sheaves in the block $\LM$.  We also write $P$ for the direct sum of the proprojective covers of the irreducible objets in $\cM$.  Theorem \ref{1} follows directly from the following more concrete statement: we can set $A=Ext^\bu(\check L,\check L)$
 equipped with the natural homological grading, then equivalence in Theorem \ref{1}(a,i) follows from Theorem \ref{2}(b), while
 equivalence in Theorem \ref{1}(a,ii) follows from Theorem \ref{2}(a). The properties listed in Theorem \ref{1}(b) are clear from the definition.

 \begin{Thm}\label{2}
 a) The differential graded algebra $RHom_{\LD}( \check L, \check L)$ is formal.
 
 b) The algebra $End(P)$ is isomorphic to the completion of the graded
 algebra $Ext^\bu( \check L, \check L)$ with respect to the augmentation ideal topology.
 \end{Thm}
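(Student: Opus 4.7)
The plan is to combine the two geometric descriptions of the paper: Theorem \ref{pribl_thm} realizes $\cL$ (the semisimple complexes in $\cD$ with $\Hom = Ext^\bullet$) as the full subcategory $\cA^\fS \subset \Coh^\fS(\fB)$ via $R\Gamma_\LK$, while Propositions \ref{translff_prop} and \ref{Pgen} realize $\P$ as $\Ah^\fS \subset \Coh^\fS(\fBh)$ via $\tilde{T}$. Both $Ext^\bullet(L,L) = End_\cL(L)$ and $End(P)$ thereby become endomorphism algebras of concrete equivariant coherent sheaves, on $\fB$ and on its formal completion $\fBh$ at zero respectively.

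To establish (b), I would trace $L$ and $P$ through these equivalences and show that they correspond, up to completion. On generators this is explicit: closed-orbit constant sheaves $\Ce_S[\dim S]$ map under $R\Gamma_\LK$ to $\O_{\fB_{\phi(S)}}$ (Lemma \ref{Ow}), while the deformed principal series $\Pi_{\LL_v}$ map under $\tilde{T}$ to $\bigoplus_{\tilde v}\hatt{\O_{\fB_{\tilde v}}}$ (Proposition \ref{Pgen}(c)); and the functors $C_\al$ and $R_\al$ both correspond to $pr_\al^* pr_{\al *}$ (Lemma \ref{C_al}, Proposition \ref{Pgen}(b)). Propositions \ref{spec_gen} and \ref{Pgen}(a) then build up the full collections $\{L_i\}$ and $\{P_i\}$ by iterating these functors and passing to direct summands, with the underlying pairing of the seed data provided by Vogan duality (Theorem \ref{thm_match}). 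Since $\Ah^\fS$ is by definition the essential image of $\cA^\fS$ under the exact completion functor, and since $\Hom$ of coherent sheaves on $\fB$ commutes with this flat base change, one reads off $End(P) \cong Ext^\bullet(L,L)^\wedge$ where the completion is with respect to the augmentation ideal (the maximal ideal of zero in $\O(\fB)$).

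For part (a), I would invoke the standard purity-implies-formality argument of \cite{BGS}. The simple perverse sheaves $L_i$ underlying $L$ are pure (as mixed Hodge modules via Saito's theory, or after reduction to finite characteristic as mixed $\ell$-adic sheaves), as was already used in section \ref{sect3} in the proof of Theorem \ref{RGamma_full}. Purity of each $L_i$ endows $RHom_{\cD}(L,L)$ with a weight structure for which $Ext^k(L,L)$ is pure of weight $k$; a DG-algebra with this property is quasi-isomorphic to its cohomology by the usual spectral-sequence/Deligne-splitting argument, giving formality. The main obstacle will be the bookkeeping required to verify that the matching on the level of generators propagates consistently to the matching of $L$ with $P$, in particular that the $\fS$-equivariant structures (which encode the distinction between $\Wb$ and $\Wbp$) are correctly tracked through iterated application of $C_\al$ and $R_\al$; this requires genuine use of the combinatorial matching of Theorem \ref{thm_match} rather than a mere numerical coincidence of cardinalities.
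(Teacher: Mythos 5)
Your proposal is correct and follows the paper's own route: Theorem~4.2(b) is indeed proved by matching the two geometric descriptions of the preceding sections — $\cL \cong \cA^\fS$ via $R\Gamma_\LK$ (Theorem~\ref{pribl_thm}) and $\P \cong \Ah^\fS$ via $\Tt$ (Propositions~\ref{translff_prop}, \ref{Pgen}, \ref{singbl_prop}) — tracking the generators $\Ce_S[\dim S] \leftrightarrow \Pi_{\LL_v}$ and the functors $C_\al \leftrightarrow R_\al$, both of which go to $pr_\al^* pr_{\al*}$, with Vogan duality (Theorem~\ref{thm_match}) supplying the combinatorial identification; and part~(a) is the standard purity-implies-formality argument, exactly as the paper indicates. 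One small point worth making explicit (which you pass over): the $\mathfrak m_0$-adic topology on $Ext^\bu(L,L)$ and its augmentation-ideal topology coincide because $Ext^\bu(L,L)/\mathfrak m_0\cdot Ext^\bu(L,L)$ is a finite-dimensional graded algebra with semisimple degree-zero part, so its positive-degree part is nilpotent — this is what justifies identifying the coherent-sheaf completion with the augmentation-ideal completion in the statement.
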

 
 Here part (a) follows by a standard weight argument (see for example \cite[Proposition 6]{BF}) 
 from the fact that the $Ext^i_{\LD}( \check L, \check L)$ is pure of weight $i$ where we either consider corresponding
 sheaves on the flag variety $\LBB$ over $\Fqbar$ or work with Hodge $D$-modules, endowing $L$ 
 with a weight zero Weil (respectively, Hodge) structure. Purity of $Ext^i_{\LD}( \check L, \check L)$ follows
 from Theorem \ref{pribl_thm}, since $ \check L$ is pure by \cite{BBD} and  cohomology of a pure complex
 on a proper variety is pure by \cite{Weil2}. Notice that this also applies to equivariant cohomology
 since the classifying stack $B\LK$ has a model which is ind-proper and ind-smooth.

 Part (b) follows from theorems~\ref{pribl_thm} and~\ref{monodromic main theorem}. 
  
\medskip
 
 We refer to e.g. \cite{BGS} for a definition of a 
 graded version of an abelian category. By a graded version, or lift of a triangulated (respectively, abelian)
 category $\CC$ we understand another triangulated (respectively, abelian) category $\widetilde \CC$
 together with an autoequivalence $S$ of $\widetilde \CC$ (the grading shift functor) 
 denoted by $S:M\mapsto M(1)$,
  the "degrading" triangulated (respectively, exact) 
  functor $d:\widetilde \CC\to \CC$ and an isomorphism,
   $d\circ S\cong d$, such that for $M,\, N\in \widetilde \CC$ we have 
   $Hom(d(M),d(N))\cong \bigoplus\limits_\Zet Hom(\widetilde M,\widetilde N(n))$ and $\CC$ is generated 
   as a triangulated category (respectively, under taking subquoitents) by
   the image of $d$.

\begin{Cor}\label{equiv}
There exist graded versions $\LD^{gr}$ of $\LD$, $\M^{gr}$ of $\M$ and 
an $\O(\fh/W)$-linear abelian category
$\wt{\M}$ with a graded version $\wt{\M}^{gr}$,
such that $\M$ is identified with the subcategory of objects in $\wt{\M}$
where the ideal of $0\in \fh/W$ acts nilpotently, while 
$\Mh$ is identified with the completion of $\wt{\M}$ at the ideal of 0.
Furthermore, we have  an equivalence
$\kappa: D^b(\wt{\M}^{gr})\cong \LD^{gr}$, such that

i) $\kappa (M(1))=\kappa(M)(1)[1]$, where $M\mapsto M(1)$ denotes the shift of grading.

ii) $\kappa$ sends indecomposable projective objects in $\wt{\M}^{gr}$ to shifts of irreducible
perverse sheaves in $\LM$. 

iii) 
 $\kappa$ intertwines the structure of a module category over the respective monoidal categories:
the graded lift of $D_\LB(\LG/\LB)$ acting on  $\LD^{gr}$ and the graded lift of
completed unipotently monodromic objects in $D(U\bs G/U)$ acting on $D^b(\Mh^{gr})$, see \cite{BY}.
The two monoidal categories $D_\LB(\LG/\LB)$ and $D(U\bs G/U)$  are equivalent by \cite{BY}; see also \cite{BeGi}. 
 
 In particular, the graded lift of the functor $\F\mapsto \F*C_w$ where $C_w$
 is the irreducible perverse sheaf corresponding to $w\in W$ is intertwined with
 the functor $P_w$ of convolution with the free-monodromic tilting object $\hat{T}_w$
(the projective functor).
 \end{Cor}

{\em Proof.\ }  Define $\M^{gr}$ to be the category of finite length (equivalently, finite 
dimensional) graded modules over the graded algebra $A$ from Theorem \ref{1}.
Define $\wt{\M}$ to  be the category of finitely generated $A$ modules
and $\wt{\M}^{gr}$ to be the category of finitely generated graded $A$ modules.
Likewise, define $ \LD^{gr}$ to be the perfect derived category of dg-modules 
over $A^\bu_{dg}$ equipped with an additional grading for which the differential
has degree zero.

Theorem \ref{1} shows that $\M^{gr}$ and $\LD^{gr}$ are graded versions
of $\M$, $\LD$ respectively, while $\wt{\M}$ is related to $\M$, $\Mh$ as described
above.

The equivalence $D^b(\wt{\M}^{gr})\cong \LD^{gr}$ is clear from the definition.
It sends an indecomposable projective in $\wt{\M}$ to 
an indecomposable summand in a shift of the free $A^\bu_{dg}$-module.
By Theorem \ref{1}(ii) such a summand corresponds to a 
shift of a graded lift of an irreducible perverse sheaf.

To check (iii) one first observes a similar compatibility for the equivalence
between the additive category of projective objects in $\wt{\M}^{gr}$ 
and that of semisimple complexes in $\LD^{gr}$. These categories are acted upon 
by the graded lifts of free monodromic tilting pro-objects in  monodromic $U$-equivariant sheaves
on $G/U$ and of semisimple complexes in $D_{\LB}(\LG/\LB)$ respectively. These two 
monoidal categories are equivalent as a special case of the main result of \cite{BY}, 
the idea of the proof in this case going back to \cite{BeGi}. The compatibility 
is clear from the construction, since both the monoidal equivalence and equivalence
of the module categories is characterized by its compatibility with translation
to the singular block and cohomology functors respectively. 
Now, the compatibility for the equivalence of  triangulated categories
follows since each of the four triangulated categories  is equivalent to
the homotopy category of the corresponding additive category of generators.
\qed

\medskip

 One can summarize the statement of  Corollary \ref{equiv} by saying that 
 $D^b(\M)$ and $\LD$ are {\em Koszul dual} one to the other.
 
 \begin{rmk}
 Notice that the compatibility isomorphism in Corollary \ref{equiv}(iii)
 is compatible with forgetting the grading functors: 
 this follows by the argument of \cite{BY} where it is shown that the identification of 
 the derived constructible category with the homotopy category of free monodromic
 tilting objects and, respectively, with the homotopy category of semisimple complexes
  is compatible with convolution.
 \end{rmk}
 
 \begin{rmk}
 We expect that the graded version  of categories $\M$, $\Mh$ obtained here as a formal consequence of Koszul duality
 admit a more direct, geometric description via the theory of mixed Hodge $D$-modules. In particular, the 
 grading on the $Hom$ spaces between indecomposable projective pro-objects can be interpreted as the weight
 grading for a natural pure Hodge structure on that space; 
 see section \ref{sect5} for a related discussion.
 \end{rmk}
 
 The following important property of the equivalence $\kappa$ shows the relation
 of our result to equality of Kazhdan-Lusztig polynomials established in \cite{ic4}.

 In what follows we will write $\Pi_L$ instead of $\Pi_\cL$ where $L\in \MM$ is
 the minimal extension an irreducible equivariant local system
 $\cL$.
 
 \begin{Prop}\label{cost_to_st} 
 The equivalence $\kappa$ of Corollary \ref{equiv} sends graded lifts of deformed principal
 series
$\Pi_L$  in $\Mh$ to graded lifts (with a homological shift) of costandard objects $\nabla_{\check{L}}$.
Here $L\mapsto \check{L}$
is a bijection between irreducible objects in $\MM$ and in $\LM$.

 \end{Prop}

\proof  By inspection it is clear that for  an irreducible object $L\in \MM$  we have
$$\kappa(\tilde P_L) \cong \tilde{\check{L}}[m],$$
where $P_L$ is a projective (pro)cover of $L$, $\tilde P_L$ is
its graded lift, $\check{L}$ is an irreducible object in $\LM$
and the integer $m$ depends on the choice of a graded lift.
The map $L\mapsto \check{L}$ is clearly a bijection between
irreducible objects in $\MM$ and in $\LM$.

Furthermore, the support of $L$ is the open $K$-orbit
if and only if support of  $\check{L}$ is closed.
We have
\begin{equation}\label{dim_codim}
\codim(\supp(L))= \dim (\supp(\check{L}))-d,\end{equation}
where $d$ is the dimension of a closed $\LK$-orbit
on $\LBB$. To see this observe that by the proof of Proposition \ref{spec_gen} 
 the right hand side of \eqref{dim_codim} is the minimal length of $w$ such that
for an irreducible $\check{L}_0$ supported on a closed orbit the convolution
 $\check{L}_0*C_w$ contains $\check{L}[m]$ as a direct summand for some $m$ (here $C_w$ is the corresponding irreducible 
in $Perv_\LB(\LBB)$). Likewise, by the proof of Proposition \ref{Pgen},
the left hand side is the minimal length of $w$ such that the projective
cover of $L$ is a direct summand in $P_w(\Pi_{\cL_0})$ where $\cL_0$
is supported on the open orbit and $P_w$ is the corresponding 
projective functor, see Corollary \ref{equiv}(iii).
Thus \eqref{dim_codim} follows from Corollary \ref{equiv}(iii) according to which
 $\kappa$ sends the graded lift of convolution with $C_w$ to that of 
$P_w$.

Let $\LD_{\leq c}$ be the full triangulated subcategory generated
by irreducible objects whose support has dimension 
$d+c$ or less. Let $D^b(\MM)_{\geq c}$ be the full triangulated
subcategory generated by $P_L$ where $L$ runs over irreducible
objects in $\MM$ whose support has codimension $c$ or less.
It follows from \eqref{dim_codim} that $\kappa$ sends
the graded lift of $D^b(\MM)_{\geq c}$ to that of $\LD_{\leq c}$. 

It is not hard to see that $\LD_{\leq c}$ is also generated 
by costandard objects $\nabla_{\check L}$, where $\check L$ runs irreducible objects whose support has dimension 
$d+c$ or less, while $D^b(\MM)_{\geq c}$ is generated by $\Pi_L$,
where $L$ runs over irreducible
objects in $\MM$ whose support has codimension $c$ or less.
Moreover,
\begin{equation}\label{PiandP}\Pi_L\in D^b(\MM)_{>c}^\perp\ \ \ \ \&\ \ \Pi_L \cong P_L \mod D^b(\MM)_{>c},
\end{equation}
\begin{equation}\label{nabandL}
\nabla_{\check{L}}\in \LD_{<c}^\perp\ \ \ \ \&\ \ \nabla_{\check{L}}\cong \check{L}\mod \LD_{<c},
\end{equation}
where $c=\codim(\supp(L))$ and  notation $D^\perp=\{M\ |\ Hom(X,M)=0 \ \forall \, X\in D\}$ is used. It is clear
that \eqref{PiandP}, \eqref{nabandL} characterize $\Pi_L$, $\nabla_{\check{L}}$
uniquely and a similar characterization applies to their graded lifts and yields the statement. \qed

\begin{Rem}\label{KLpol}
By standard BGG reciprocity, the matrix of transformation between the bases
$\{\tilde \Pi_L\}$ and $\tilde P_L$ in the Grothendieck group $K(\Mh^{gr})$
is inverse to the matrix of transformation between the bases $\{\tilde L\}$
and $\{\tilde \nabla_L\}$ of $K(\MM^{gr})$. Thus Corollary \ref{equiv} together
with Proposition \ref{cost_to_st} imply equality between Kazhdan-Lusztig
polynomials for $\MM$ and inverse Kazhdan-Lusztig polynomials for $\LM$
established in a more general setting in \cite{ic4}. 
\end{Rem}

\subsection{The representation theoretic description of the equivariant category}
 The category $\LD$ can be described in module theoretic terms as follows. Consider the 
abelian category $\widetilde{\LM}$, the principal block of $(\Lg, \LK)$ modules with a fixed  {\em generalized}
infinitesimal central character. 
Let $Q=\bigoplus Q_i$ be a minimal 
 projective generator in the category of pro-objects in that category, here $Q_i$
 are pairwise non-isomorphic indecomposable projectives (projective covers of irreducible objects). Denoting $\LA=End(Q)$
 we get an equivalence
 $\widetilde{\LM}\cong \LA-mod^{nil}$. The action of the center of the enveloping algebra makes
$\LA$ a $Sym(\Lh)$-algebra, where $\Lh$ is the abstract Cartan of $\Lg$.
Consider the DG-algebra $\overline\LA^\bu=\LA\Lotimes _{Sym(\Lh)}\Ce$.
Using e.g. \cite[\S 9.3]{BHeck} it is easy to check the following:

\begin{Prop}\label{dga}
We have $\LD\cong DG-mod^{fl}(\overline\LA^\bu)$, where the upper index $^{fl}$ stands for finite length modules.
\end{Prop}
 To clarify the relation with the result of \cite{S} we need to present a 
slight modification of the above constructions.
Let $S\subset \h^*$ be a vector subspace and also consider the complement $S^\perp \subset \fh=\check\h^*$.

Proposition \ref{dga} allows us to reinterpret Theorem \ref{1} as follows: the triangulated categories of DG-modules over the DG algebras $A\Lotimes _{Sym(\h)}\O(S)$ and $\LA\Lotimes _{Sym(\Lh)}\O(S^\perp)$
are in a Koszul duality relation described in Corollary \ref{equiv}.

Furthermore, assume that $S$ is transversal to $\a^*\subset \h^*$, i.e., $Tor_{>0}^
{\O(\h^*)}(\O(S), \O(\a^*))=0$. The main result of \cite{BBG} implies then that $Tor_{>0}^{Sym(\h)}(A, \O(S))=0$, thus the first of the two
 dual triangulated categories discussed in the previous paragraph is identified
 with $D^b(A_S-mod)$, where $A_S= A\otimes_{\O(\h^*)} \O(S)$. 
 If a similar Tor vanishing condition holds on the dual side, with $A,\,S$
replaced by $\LA, \, S^\perp$, then the second category is $D^b(\LA_{S^\perp}-mod)$,
$\LA_S= \LA\otimes_{\O(\h^*)} \O(S^\perp)$.
Observe that $A_S-mod^{fl}$, $\LA_{S^\perp}-mod^{fl}$ are the categories of Harish-Chandra
modules for $(\g,K)$, respectively $(\Lg,\LK)$, subject to a certain condition on the action of the center
of $U(\g)$.

A particularly nice situation occurs when both Tor vanishing conditions hold simultaneously.
In this case Theorem \ref{1} implies

{\em The algebras $A_S$, $\LA_{S^\perp}$ controlling the categories of Harish-Chandra modules
for $(\g,K)$, respectively $(\Lg,\LK)$, are dual Koszul quadratic algebras.}

We mention two examples of that situation. 

\medskip

{\em Koszul duality for category $O$.}
Suppose that $\GR$ is a complex semisimple group viewed as a real group; by
a slight abuse of notation we denote that complex group by $G$. Then $\cM$ is a block of $(\fg\times\fg,G)$-modules and $\widecheck \cM$ is a block of 
$(\check\fg\times \check\fg,\check G)$-modules where $G$ and $\check G$ act diagonally. If $\fh$ is the universal Cartan of $\fg$ then $\fh\times\fh$ is the universal Cartan of $\fg\times\fg$ and $\fa \subset \fh\times\fh$ is the anti-diagonal. Let $S=\fh^*\times \{0\}\subset \fh^*\oplus \fh^*$.
Then both transversality conditions hold. The categories $A_S-mod$, $\LA_{S^\perp}-mod$
in this case are identified with the usual category $O$ (with, say, a fixed integral regular central 
character and no Cartan diagonalizability condition). The equivalence from the previous
paragraph reduces in this case to the one constructed in \cite{S}.

\medskip

{\em Koszul duality for the principal block in a split group.} Consider the case when when the real form of  $\LG$ is split. In this case  $\check\a^*=\Lh^*$ and therefore the second condition holds for any $S$. Let us choose $S=\h^*$ and then  $S^\perp =0$.Thus, we are back at a special case of the situation considered in the previous sections of the paper.
We obtain the following

\begin{Cor}
In the situation of Theorem \ref{1}, assume that $\LG_\RE$ is split. Then there exist
two Koszul dual graded rings $A$, $A^!$ with $A^!$ finite dimensional such that:
$$\LM\cong 
A^!-mod_{fd} \qquad
\LD \cong D^b(A^!-mod_{fd}) \qquad \M\cong A-mod_{nilp}\,.
$$ 
\end{Cor}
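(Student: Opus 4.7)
My plan is to specialize the general framework developed in the preceding paragraphs to the subspace $S=\a^*\subset\h^*$. Under the hypothesis that $\LG_\RE$ is split we have ${}^L\a^*=\Lh^*$, and by the Vogan duality matching of Theorem \ref{thm_match} the quasi-split group $G_\RE$ is itself split, so $\a^*=\h^*$ and hence $S=\h^*$, $S^\perp=0$. With this choice the tensor products collapse to $A_S=A\Lotimes_{Sym(\h)}Sym(\h)=A$ and $\LA_{S^\perp}=\LA\Lotimes_{Sym(\Lh)}\bC$; I set $A^!:=\LA\Lotimes_{Sym(\Lh)}\bC$.

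Next I would verify the two Tor-vanishing conditions demanded by the discussion preceding the corollary, namely $Tor^{Sym(\h)}_{>0}(A,Sym(\h))=0$ and $Tor^{Sym(\Lh)}_{>0}(\bC,Sym(\Lh))=0$; both are immediate because $Sym(\h)$ (resp.\ $Sym(\Lh)$) is free over itself. Consequently the Koszul duality of Corollary \ref{equiv} specializes to a Koszul duality between the (graded) derived categories $D^b(A\text{-}mod)$ and $D^b(A^!\text{-}mod)$. Finite-dimensionality of $A^!$ then follows from the fact that $\LA=\End(Q)$ (with $Q$ a minimal projective generator of $\LM$) is finitely generated as a module over $Sym(\Lh)$, which is the Langlands-dual counterpart of the finite-generation statement underlying Proposition \ref{singbl_prop}; specializing $\LA$ to the fiber of $Sym(\Lh)$ at the origin therefore produces a finite-dimensional algebra.

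Finally I would match the abstract module categories with $\M$, $\LM$ and $\LD$. The equivalence $\M\cong A\text{-}mod_{nilp}$ is Theorem \ref{1}(i) combined with the observation that, for a nonnegatively graded algebra with semisimple degree-zero part, finite length coincides with nilpotence of the augmentation ideal. The equivalence $\LM\cong A^!\text{-}mod_{fg}$ is obtained from the standard description $\LM\cong\LA\text{-}mod^{fl}$ together with the fact that, in the present split situation, the $Sym(\Lh)$-action on objects of $\LM$ factors through the fiber at the origin, so such modules are automatically $A^!$-modules. The description $\LD\cong D^b(A^!\text{-}mod_{fd})$ follows by combining Proposition \ref{dga} with the formality in Theorem \ref{2}(a) and the finite-dimensionality of $A^!$.

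The main obstacle I anticipate lies in this final step: making the passage from the derived-category version of Koszul duality to the abelian-category statements $\LM\cong A^!\text{-}mod_{fg}$ and $\M\cong A\text{-}mod_{nilp}$ precise, in particular checking that the specialization procedure interacts correctly with the various notions of finite length, finite generation, and nilpotence on both sides of the duality.
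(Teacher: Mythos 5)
Your argument tracks the paper's own sketch, and the main structure (choice of $S$, the two Tor-vanishing conditions, the definition of $A^!$, finite-dimensionality from Cohen--Macaulayness, and the identification of the three module categories) is sound. However, the assertion that ``by the Vogan duality matching of Theorem \ref{thm_match} the quasi-split group $G_\RE$ is itself split'' is not a consequence of that theorem and is, I believe, false in general. Theorem \ref{thm_match} identifies $\Lc\cong\a^*$, where $\Lc$ is the Cartan of $\LK$, sitting inside the \emph{maximally compact} $\Lth$-stable Cartan $\Lt$ of $\LG$; by contrast ``$\LG_\RE$ is split'' is a statement about the \emph{maximally split} $\Lth$-stable Cartan. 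Through the matching, $\a^*=\h^*$ (i.e.\ $G_\RE$ split) is equivalent to $\Lc=\Lh$, i.e.\ to $\LG_\RE$ being of \emph{equal rank} (possessing a compact Cartan subgroup), which is a genuinely different condition from $\LG_\RE$ being split whenever $-1\notin W$. For example $\LG_\RE=PGL(3,\RE)$ is split but has $\dim\Lc=1<2=\dim\Lh$; the quasi-split dual form $G_\RE$ then has real rank $\dim\a^*=1$ and cannot be split. So the deduction $\a^*=\h^*$ is not available in general.

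Fortunately this claim is unnecessary, which is why the corollary is still correct. Simply take $S=\h^*$ from the start: the first transversality condition $Tor^{\O(\h^*)}_{>0}(\O(S),\O(\a^*))=0$ is then trivial (one is tensoring with the free module $\O(S)=Sym(\h)$), and $S^\perp=0$ holds by definition rather than by appeal to $\a^*=\h^*$. The hypothesis that $\LG_\RE$ is split is used only where it is actually needed: it gives ${}^L\a^*=\Lh^*$, hence full support and, via \cite{BBG}, the flatness of $\LA$ over $Sym(\Lh)$ and the vanishing $Tor^{Sym(\Lh)}_{>0}(\LA,\bC)=0$, so that $A^!=\LA\otimes_{Sym(\Lh)}\bC$ is an honest finite-dimensional algebra rather than a DG-algebra. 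With this adjustment the rest of your argument --- Koszul duality from Corollary \ref{equiv}, $\M\cong A\text{-}mod_{nilp}$ from Theorem \ref{1}(i), and $\LD\cong D^b(A^!\text{-}mod_{fd})$ from Proposition \ref{dga} together with formality --- goes through as you wrote it. (The paper's own phrase ``Consider $S=\a^*$, thus $S^\perp=0$'' carries the same implicit identification $\a^*=\h^*$, so you have faithfully reproduced an imprecision that is already present in the source; the cleaner formulation is $S=\h^*$.)
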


Note that the above considerations show that $D_K(G/B)\cong D^b(Perv_K(G/B))$ when $G$ is a split
semi-simple group. It would be interesting to obtain a topological proof of this fact. 

\section{Further directions: Hodge $D$-modules}\label{sect5}

We were led to the idea of the block variety and the interpretation of the functor $\widetilde T:\M \to \Coh_0^\fS(\fB)$  from Hodge theoretic considerations. We briefly explain the idea. 

 Let us write $\fg=\fk\oplus\fp$ for the Cartan decomposition and let us recall that $T^*X/H =\widetilde \fg$, the Grothendieck simultaneous resolution. We write $\widetilde\fp$ for the inverse image of $\fp$ under the moment map $T^*X/H =\widetilde\fg \to \fg$. We write $\Coh^{\bC^*\times K}({\widetilde\fp})_{0}$ for the category of $\bC^*\times K$-equivariant 
 coherent sheaves on ${\widetilde\fp}$ set theoretically supported on $\widetilde{\fp}\cap \Nt$, the union of conormal bundles of $K$-orbits on $\BB$. 
 
Let $\M_{\rm Ho}$ denote the full subcategory of the category of $K$-equivariant Hodge  
$D$-modules $M$ on $X$, such that $\on{Forg}(M)\in \M$; here $\on{Forg}$ denotes
the functor of forgetting the Hodge structure. 

We have a functor
\begin{equation*}
\gr: \M _{\rm Ho}  \To 
  \Coh^{\bC^*\times K}(\widetilde\fp)_{0}
\end{equation*}
taking a Hodge $D$-module to its associated graded with respect to the Hodge filtration.

\begin{conj}\label{conj1}
a) There exists a full subcategory $\Mmix\subset \M_{\rm Ho}$ which is a {\em graded version}
of $\M$.

b) For $ M,N\in \Mmix$ we have
$$Ext^i(\on{Forg}(M),\on{Forg}(N))\iso Ext^i_{\Coh ^{K}({\widetilde\fp})}(\gr(M), \gr(N))$$
for all $i$.
\end{conj}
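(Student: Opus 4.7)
The plan proceeds in three stages: construct the category $\Mmix$, verify the graded-version property (a), and then establish the $\Ext$ comparison (b) via a controlled degeneration argument.

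First I would define $\Mmix$ as the full subcategory of $K$-equivariant mixed Hodge modules on $X$ whose underlying $D$-module lies in $\M$. The irreducibles $j_{!*}(\LL)$ lift canonically to pure Hodge modules because each $\LL$, as a $K \times H$-monodromic local system on an orbit in $X$, is a direct summand of a pushforward of a unipotent nearby-cycles sheaf and hence inherits a canonical variation of mixed Hodge structure. For part (a), the graded-version identity
$$\Hom_\M(\on{Forg}(M),\on{Forg}(N)) = \bigoplus_{i\in\bZ} \Hom_{\Mmix}(M, N(i))$$
reduces, via Saito's weight formalism, to a Beilinson-type purity statement: $\Ext$-groups between pure Hodge modules of a given weight are pure of that weight, so they split according to Tate twists of $N$. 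The formal set-up mirrors the $\ell$-adic mixed category implicitly used in section \ref{sect3}.

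The substance lies in part (b). The strategy is to reduce everything to the behavior of $\widetilde{\gr}$ on projective pro-objects. By Proposition \ref{Pgen}(a), the category $\P$ of projectives is generated under wall-crossings $R_\alpha$ from the deformed principal series modules $\Pi_\LL$, each of which is the $!$-extension of a tame equivariant local system tensored with a pro-unipotent local system on the open torus orbit; these constructions are Hodge-theoretic, so each $\Pi_\LL$ lifts canonically to $\widehat{\Mmix}$. Letting $\hat P$ be a projective cover of $\bigoplus_L L$ in $\widehat{\Mmix}$, I would show that $\widetilde{\gr}(\hat P)$ is a projective object of $\Coh^{\bC^* \times K}(\tilde\fp)$ supported on $\tilde\fp \cap \Nt$, and that $\widetilde{\gr}$ induces an isomorphism $\End_{\widehat{\Mmix}}(\hat P) \cong \End_{\Coh^{\bC^*\times K}(\tilde\fp)}(\widetilde{\gr}(\hat P))$ matching the identification of $\End(\hat P)$ with a completion of functions on the block variety from section \ref{sect2}. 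Both sides of (b) can then be computed from a projective resolution $P^\bullet \to M$ in $\widehat{\Mmix}$, reducing the claim to the corresponding $\Hom$-level assertion for each resolvent $P^i$, which follows tautologically once projectivity of $\widetilde{\gr}(P^i)$ is known.

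The critical obstacle --- and the step I expect to be genuinely difficult --- is proving that $\widetilde{\gr}$ carries projective covers in $\widehat{\Mmix}$ to projective objects on $\tilde\fp$. This is equivalent to a strong form of the degeneration of the Hodge-to-de-Rham spectral sequence, compatible with the pro-unipotent deformation that controls the block. A natural approach is inductive via wall-crossings: by Proposition \ref{Pgen}(b) the functor $R_\alpha$ corresponds geometrically to $pr_\alpha^*\, pr_{\alpha *}$ on the block variety, and on the cotangent side one expects a matching Springer-type pushforward-pullback along the minimal parabolic; one must then verify that $\widetilde{\gr}$ intertwines these operations and preserves projectivity under them. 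A successful implementation would yield a Hodge-theoretic refinement of Theorem \ref{1}, extending the Koszul duality from the flag variety to the full cotangent bundle $\tilde\fp$, and would plausibly provide the link to the Ben-Zvi--Nadler geometric Langlands approach mentioned in the introduction.
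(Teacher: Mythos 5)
This statement is a \emph{conjecture}; the paper offers no proof of it (and explicitly defers even the complex-group special case to the forthcoming reference \cite{BeRi}). So there is no ``paper's own proof'' to compare against, and what you have written is correctly framed as a research plan rather than a verification. Evaluating it as such, there are two substantive issues.

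First, your choice of $\Mmix$ essentially collapses the content of part (a). You propose to take $\Mmix$ to be ``the full subcategory of $K$-equivariant mixed Hodge modules on $X$ whose underlying $D$-module lies in $\M$,'' but that is by definition the category $\M_{\rm Ho}$ introduced just before the conjecture. The conjecture asserts that a \emph{proper} full subcategory $\Mmix\subset\M_{\rm Ho}$ exists which is a graded version of $\M$; that the full category $\M_{\rm Ho}$ is too large is precisely the point. In the $\ell$-adic/mixed settings one has to cut down to objects whose weights (or Frobenius eigenvalues, or Hodge--Tate twists) lie in a single $\bZ$-coset and whose pure subquotients split, in analogy with the mixed category in \cite{BGS}; Hodge modules whose irreducible constituents carry arbitrary half-integer or non-integrally-shifted weights, or which mix non-split pure pieces, must be excluded. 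Relatedly, the assertion that ``$\Ext$-groups between pure Hodge modules of a given weight are pure of that weight'' is an overclaim: Saito's theory gives weight \emph{bounds} (semipurity) on $\Ext$, not purity. Actual purity of the relevant $\Ext$'s, and the splitting $\Hom_\M(\on{Forg}M,\on{Forg}N)\cong\bigoplus_n\Hom_{\Mmix}(M,N(n))$, is a nontrivial input that your plan currently assumes rather than derives; in Soergel-type arguments this is where pointwise purity of the irreducibles and formality of the $\Ext$-algebra are doing the real work.

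Second, you correctly identify the key difficulty in (b)---that $\widetilde\gr$ should send projective pro-objects to projective objects in $\Coh^{\bC^*\times K}(\tp)$ supported on $\tp\cap\Nt$, compatibly with wall-crossings---but the plan does not resolve it. The proposed inductive mechanism (matching $R_\alpha$ to a Springer-type pushforward--pullback on the cotangent side, then checking $\widetilde\gr$ intertwines them and preserves projectivity) is exactly the hard step, and ``preserves projectivity'' here encodes a degeneration statement for the Hodge filtration on the relevant deformed principal series that is not a formal consequence of anything established in sections \ref{sect3}--\ref{sect2}. As a research outline this is a reasonable place to look (it is broadly the route the authors hint at via their $\tilde T\colon\M\to\Coh^\fS_0(\fB)$ and the proposed comparison $\kappa\circ\gr(M)\cong\tilde T(\on{Forg}(M))$), but you should be explicit that the plan leaves the conjecture open.
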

In the special case when $G_\RE$ is a complex group the Conjecture will be established 
in \cite{BeRi}, cf. also \S 11.4, Conjecture 56 in \cite{BHeck}.

It may be possible to deduce part (a) of the Conjecture from the result of \cite{AK},
while  \cite{SW} may provide other geometric constructions of 
$\Mmix$.

In order to relate this Conjecture to our present methods we make the following
observation. Let $\Sigma\subset \p$ be a transversal slice to a regular nilpotent orbit, thus
$\Sigma$ is contained in the set of regular elements $\g^{reg}$. Let $\widetilde \Sigma$
denote the set of pairs $(x,\chi)$ where $x\in \Sigma$ and $\chi$ is a character of the
abelian algebraic group $Z_K(x)$, the centralizer of $x$ in $K$. 

One can check that $\widetilde \Sigma$ is naturally equipped with the structure
of an ind-algebraic variety, an infinite union of components, each one of which is a ramified
covering of $\Sigma$.  For $\F\in \Coh^K(\fp)$ each fiber of the sheaf $\F|_\Sigma$ 
is graded by the set of characters of $Z_K(x)$; this observation can be upgraded
to a definition of a coherent sheaf $\F_{\widetilde \Sigma}$ on 
$\widetilde \Sigma$ whose direct image to $\Sigma$
is identified with $\F|_\Sigma$. This clearly extends to a functor $\kappa:\Coh^K(\widetilde \fp)\to \Coh(\widetilde \Sigma\times _{\h^*/W}
\h^*)$ (cf. \cite{Abe} where a similar construction is used in the context of affine Soergel bimodules).

Note that for a regular semisimple element $x\in \Sigma$ the abelian group $Z_K(x)$
can be identified with the stabilizer of a point in the open $K$-orbit $\BB_0$. It is not hard to check the following.

\begin{Lem} 
a) Consider the set of pairs $(x,\chi)\subset \widetilde \Sigma$, where $x$ is regular semisimple 
and $\chi$ is such that the corresponding $K$-equivariant, $T$-monodromic local system on $X_0$
belongs to $\MM$. This is an open subset in a component $\widetilde \Sigma_\MM\subset
\widetilde \Sigma$.

b) We have $\widetilde \Sigma_\MM\cong \a^*/W_\M$.

c) For $M\in \M_{\rm Ho}$ the coherent sheaf $\kappa\circ\gr(M)$ is supported on
$\widetilde \Sigma_\MM\times _{\h^*/W} \h^*$.
\end{Lem}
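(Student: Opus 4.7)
The plan is to combine Kostant--Rallis structure theory for the symmetric pair $(\fg,\fk)$ with the combinatorial description of the block recalled in Section~\ref{sect1}.

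For part (a), I would use the Kostant--Rallis theorem to identify $\fp/\!/K$ with $\a^*/W_\RE$ and to observe that every regular semisimple $K$-orbit in $\fp$ meets $\Sigma$ in exactly one point, with $Z_K(x) = T^\theta$ for the $\theta$-stable Cartan $T = Z_G(x)$. Characters of $Z_K(x)$ therefore parametrize $K$-equivariant, $T$-monodromic local systems on $X_0$ via the dictionary of Section~\ref{quasisplit}. By Proposition~\ref{cross_via_int} and the transitivity of the cross action on $\LL_\MM(X_0)$, the set of characters whose associated local system belongs to $\MM$ forms a single orbit of the cross action of $W^\theta$; connectedness of this orbit then identifies a unique irreducible component $\tilde\Sigma_\MM \subset \tilde\Sigma$ which contains the locus of such pairs as a dense open subset.

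For part (b), the restriction of $\tilde\Sigma_\MM$ over the regular semisimple locus $\Sigma^{rs} \cong \a^{*,rs}/W_\RE$ is an \'etale Galois cover with Galois group naturally isomorphic to $W^\theta/\Wb$. On the other hand, $\a^*/\Wb \to \a^*/W^\theta = \a^*/W_\RE$ is a ramified cover of degree $|W^\theta/\Wb|$, \'etale over the regular locus with the same Galois group. To identify them globally I would verify that the two monodromy actions agree under the identifications of Theorem~\ref{thm_match}; both are given by the tautological action of $W^\theta/\Wb$ on $\LL_\MM(X_0)$ coming from the cross action. Both spaces are normal (since $\a^*/\Wb$ is a finite quotient of an affine space, while $\tilde\Sigma_\MM$ is a connected finite cover of the smooth variety $\Sigma$), so the isomorphism over the regular locus extends uniquely to a global isomorphism.

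For part (c), I would argue from the fact that $M \in \M_{\rm Ho}$, after forgetting the Hodge structure, lies in $\M$; its restriction to the preimage of $X_0$ is therefore (built out of) irreducible $K$-equivariant, $T$-monodromic local systems in $\LL_\MM(X_0)$. Passing to associated graded and restricting to the slice $\Sigma$, the $Z_K(x)$-isotypic decomposition of the fibers of $\widetilde{\gr}(M)|_\Sigma$ is supported precisely on those characters corresponding to members of $\LL_\MM(X_0)$, i.e., on $\tilde\Sigma_\MM$. The further factorization through $\tilde\Sigma \times_{\t^*/W} \t^*$ records the canonical lift of the $Z(U\fg)$-action from $\t^*/W$ to $\t^*$ supplied by the $H$-monodromic structure on $M$. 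The main obstacle I anticipate is the monodromy comparison in part (b): one must show that the Galois action on the generic fiber of $\tilde\Sigma_\MM \to \Sigma^{rs}$, obtained by parallel transport of character data along paths in $\Sigma^{rs}$, coincides with the standard action of $W^\theta/\Wb$ on $\a^*/\Wb$, and this amounts to bookkeeping, via Theorem~\ref{thm_match} and the matching of Section~\ref{matching}, of the relation between the cross action on local systems and the Weyl group action on the split torus.
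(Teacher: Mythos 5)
The paper does not actually prove this Lemma: it appears in Section~\ref{sect5}, a ``Further directions'' section, and is stated between two conjectures with no argument given. The only hints the authors supply are the paragraph preceding the Lemma (defining $\tilde\Sigma$ and the functor $\kappa$) and the observation that for regular semisimple $x\in\Sigma$ one has $Z_K(x)\cong T^\theta$ identified with the stabilizer of a point of $\BB_0$. So there is no paper proof against which to measure your attempt; I can only assess it on its own terms.

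Your overall strategy---Kostant--Rallis to identify $\Sigma\cong\fp/\!/K\cong\a^*/W_\RE$, then treat $\tilde\Sigma$ over the regular semisimple locus as a covering space whose fibre is the set of characters of $\pi_0(T^\theta)$ and whose monodromy is the action of the real Weyl group $W^\theta=W_\RE$---is the natural one and matches the clues the paper leaves. However there are two real soft spots. First, $\tilde\Sigma_\MM^{rs}\to\Sigma^{rs}$ is \emph{not} in general a Galois cover with group $W^\theta/\Wb$: the stabilizer $\Wb$ need not be normal in $W^\theta$, so $W^\theta/\Wb$ is only a coset space. What you actually have is an isomorphism of covers coming from the fact that both $\tilde\Sigma_\MM^{rs}$ and $\a^{*,rs}/\Wb$ are the covers of $\Sigma^{rs}\cong\a^{*,rs}/W^\theta$ associated to the same monodromy representation $\pi_1(\Sigma^{rs})\twoheadrightarrow W^\theta\to\operatorname{Perm}(W^\theta/\Wb)$; that suffices, but the Galois language should be dropped. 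Second---and this is the genuine gap you yourself flag---your whole argument hinges on identifying the geometric monodromy of $\tilde\Sigma\to\Sigma$ (parallel transport of characters of $Z_K(x)$ around loops in $\Sigma^{rs}$, i.e.\ the tautological $W(K,T^\theta)$-action on $\pi_0(T^\theta)^*$) with the cross action of $W^\theta$ on $\LL_\MM(X_0)$. These are two a~priori different actions: the first is intrinsic, the second is defined in \cite{green} by a $\rho$-shifted recipe. Corollary~\ref{cross_Pi} and the isomorphism in Lemma~\ref{86}(c) give indirect evidence that they agree (both interact with $\hatt{\O(\a^*)}$ through the same $W^\theta$-action on $\a$), but nothing in the paper, and nothing in your proposal, actually \emph{proves} the coincidence. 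Without this, part~(a) only shows the $\MM$-locus is a union of components, and part~(b) only gives an abstract isomorphism of degree-$|W^\theta/\Wb|$ covers, not the specific one with $\a^*/\Wbp$.

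A smaller issue: your argument for (c) asserts that the $Z_K(x)$-isotypic decomposition of $\gr(M)|_\Sigma$ at a regular semisimple $x$ is governed by the local systems appearing in $M|_{X_0}$, but this needs justification---you must relate the Hodge-filtration associated graded of $M$ over the open orbit to the $\pi_0(T^\theta)$-decomposition of the fibres of $\gr(M)$ over $\Sigma^{rs}$, which involves tracking how $\gr(M)$ sits inside $\tilde\fp$ near the semisimple locus (it is supported on $\tilde\fp\cap\Nt$, and some care is needed about which components of that variety meet the preimage of $\Sigma^{rs}$). The conclusion is plausible but the step is not spelled out. Finally, note that the paper's statement of (b) almost certainly has a typo: $\tilde\Sigma$ is an infinite union of components, so the assertion should read $\tilde\Sigma_\MM\cong\a^*/\Wb$; you implicitly correct this, which is right.
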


We can now state:

\begin{conj}
Let $\Mmix$ be as in Conjecture \ref{conj1}. For $M$, $N\in \Mmix$ we have
a canonical isomorphism: $$\kappa\circ \gr(M)\cong \widetilde T(\on{Forg}(M)).$$
\end{conj}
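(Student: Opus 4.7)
The plan is to reduce the claim to a generating subcategory and then propagate via wall-crossing functors.

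First, I would verify that both sides naturally land in the same category. The right-hand side $\tilde T(\on{Forg}(M))$ lies in $\Coh_0^\fS(\fB)$ by the Corollary to Proposition \ref{singbl_prop}, and after completion at zero this is a full subcategory of $\Coh(\fBh)$. The left-hand side $\kappa\circ\gr(M)$ is a coherent sheaf on $\tilde\Sigma\times_{\t^*/W}\t^*$, which by the Lemma parts (b) and (c) is identified with $\fB$ and satisfies the required support condition. Compatibility with the completion at zero is automatic since all objects of $\MM$ have a single generalized central character.

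Next I would verify the isomorphism on deformed principal series $\Pi_{\LL_v}$ with $v\in W^\theta/\Wbp$; by Proposition \ref{Pgen}(a) these generate $\P$ under wall-crossing, direct sums, and direct summands. On the right-hand side, Proposition \ref{Pgen}(c) identifies $\tilde T(\Pi_{\LL_v})$ with $\bigoplus_{\tilde v}\hatt{\O_{\fB_{\tilde v}}}$. On the left-hand side, the mixed Hodge enhancement of $\Pi_{\LL_v}$ is the $!$-extension from $X_0$ of a tame variation of Hodge structure determined by $\LL_v$; its Hodge filtration is transparent, and its associated graded is the structure sheaf of the closure of the appropriate component of $T^*_{X_0}X$ inside $\tilde\fp$, twisted by the character data encoding $\LL_v$. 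Restricting along the transversal $\Sigma$ and applying $\kappa$ picks out precisely the component $\fB_v$ (and its $\fS$-orbit, matching the direct sum on the right in the non-adjoint case).

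Then I would establish that $\kappa\circ\gr$ intertwines $R_\al$ with $pr_\al^*pr_{\al*}$, paralleling Proposition \ref{Pgen}(b) for $\tilde T$. Since $R_\al$ is given by convolution with a tilting Hodge module, cf.\ \cite{BeGi}, the Hodge filtration on $R_\al M$ is computed from that on $M$ via a Koszul-type resolution. Passing to $\gr$, this should yield the base change along the degree-two map $\O(\h^*)^{W_\al}\hookrightarrow \O(\h^*)$, which after applying $\kappa$ is precisely $pr_\al^*pr_{\al*}\circ\kappa\circ\gr(M)$. Combined with the generation statement of Proposition \ref{Pgen}(a), the isomorphism extends from deformed principal series to all of $\P$, and then to arbitrary $M\in\Mmix$ by projective resolution together with the exactness properties of both functors.

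The principal obstacle is the Hodge-theoretic control of the wall-crossing step: rigorously tracking how the Hodge filtration interacts with convolution by a tilting module. This is the same package of analytic input needed for Conjecture \ref{conj1} itself, and is expected to be accessible by methods along the lines of \cite{BeRi} in the complex case. A secondary difficulty is canonicity: one must check that the isomorphism intertwines the $\O(\h^*)$-action (coming from weak equivariance on the Hodge side and from the second factor of $\fB$ on the representation-theoretic side), the $\fS$-equivariance in the non-adjoint case, and the cross action of $W^\theta$ on deformed principal series appearing in Corollary \ref{cross_Pi}, so that the resulting map is a genuine natural isomorphism of functors rather than just an abstract pointwise isomorphism.
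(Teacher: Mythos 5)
This statement is labeled as a \emph{conjecture} in the paper, not a theorem: section \ref{sect5} is explicitly titled ``Further directions'' and neither Conjecture \ref{conj1} nor the statement you are addressing is proved there (the paper only notes that the complex-group case of Conjecture \ref{conj1} will be established in \cite{BeRi}). So there is no proof in the paper against which to compare your argument, and you should not present your write-up as filling that gap without flagging it as such.

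That said, your proposed strategy is a reasonable program and it does mirror the scaffolding the paper builds for $\tilde T$: reduce to deformed principal series $\Pi_{\LL_v}$ via the generation statement of Proposition \ref{Pgen}(a), match the images there using Proposition \ref{Pgen}(c), and then propagate through wall-crossing using a Hodge analogue of Proposition \ref{Pgen}(b). The two places where your sketch stops short of a proof are precisely the ones that make this a conjecture. First, the claim that $\gr$ of the Hodge lift of $\Pi_{\LL_v}$, restricted to $\tilde\Sigma\times_{\t^*/W}\t^*$ and pushed through $\kappa$, is the structure sheaf of exactly $\bigoplus_{\tilde v}\fB_{\tilde v}$ requires actually computing the Hodge filtration on a $!$-extension of a pro-unipotent variation off the open orbit and checking there is no lower-order correction term; ``the Hodge filtration is transparent'' is an assertion, not an argument. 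Second, and more seriously, the claim that $\gr$ intertwines $R_\al$ with $pr_\al^*pr_{\al*}$ is not a formal consequence of anything in the paper; it is a genuine Hodge-theoretic statement about how the filtration behaves under convolution with a tilting object, and it is the content one would need to import from the (in-preparation, complex-case-only) \cite{BeRi}. You correctly identify this as the obstacle, but you should be clearer that without it the reduction argument does not produce a proof. Finally, note that even the domain of the claimed functor depends on Conjecture \ref{conj1}(a) (the existence of the graded version $\Mmix\subset\M_{\rm Ho}$), so the whole statement is conditional on that as well.
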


\end{document}